\documentclass{amsart}

\usepackage[margin=1in]{geometry}
\usepackage{amssymb}
\usepackage{hyperref}
\usepackage{txfonts}
\usepackage{amsmath,amscd}
\usepackage[all]{xy}

\newtheorem{theorem}{Theorem}[section]
\newtheorem{lemma}[theorem]{Lemma}
\newtheorem{proposition}[theorem]{Proposition}
\newtheorem{corollary}[theorem]{Corollary}
\newtheorem{conjecture}[theorem]{Conjecture}

\theoremstyle{definition}
\newtheorem{definition}[theorem]{Definition}
\newtheorem{example}[theorem]{Example}

\theoremstyle{remark}
\newtheorem{remark}[theorem]{Remark}

\numberwithin{equation}{section}

\def\cha{{\rm char\,}} % \char conflict with xymatrix drawing diagonal arrows
 \def\CC{{\mathbb C}} \def\FF{{\mathbb F}}
\def\Sym{\mathrm{Sym}} \def\QSym{\mathrm{QSym}} \def\NSym{\mathbf{NSym}}
\def\Comp{\mathfrak{Comp}}
\def\H{\mathcal{H}} \def\II{\mathcal I}\def\SS{\mathfrak{S}}
\def\P{\mathbf{P}} \def\C{\mathbf{C}} 
 
\def\bq{\mathbf{q}}    \def\bs{{\mathbf s}} 
\def\bw{\mathbf{w}}
 
\def\htimes{\,\hat\otimes\,}
  \def\rad{\operatorname{rad}}

\begin{document}

\title{Hecke algebras with independent parameters}
\author{Jia Huang}
\address{Department of Mathematics and Statistics, University of Nebraska at Kearney, Kearney, Nebraska, USA}
\curraddr{}
\email{huangj2@unk.edu}
\thanks{The author is grateful to Pasha Pylyavskyy and Victor Reiner for asking inspiring questions which lead to this work. He also thanks Victor Reiner for partial support from NSF grant DMS-1001933.
}
\keywords{Hecke algebra, independent parameters, Fibonacci number, Independent set, Grothendieck group.}

\begin{abstract}
We study the Hecke algebra $\H(\bq)$ over an arbitrary field $\FF$ of a Coxeter system $(W,S)$ with independent parameters $\bq=(q_s\in\FF:s\in S)$ for all generators. This algebra is always linearly spanned by elements indexed by the Coxeter group $W$. This spanning set is indeed a basis if and only if every pair of generators joined by an odd edge in the Coxeter diagram receive the same parameter. In general, the dimension of $\H(\bq)$ could be as small as $1$. We construct a basis for $\H(\bq)$ when $(W,S)$ is simply laced. We also characterize when $\H(\bq)$ is commutative, which happens only if the Coxeter diagram of $(W,S)$ is simply laced and bipartite.   In particular, for type A we obtain a tower of semisimple commutative algebras whose dimensions are the Fibonacci numbers. We show that the representation theory of these algebras has some features in analogy/connection with the representation theory of the symmetric groups and the 0-Hecke algebras.
\end{abstract}

\maketitle

\section{Introduction}

Let $W:=\langle S:(st)^{m_{st}}=1,\ \forall s,t\in S \rangle$ be a Coxeter group. The \emph{(Iwahori-)Hecke algebra} of the Coxeter system $(W,S)$ is a one-parameter deformation of the group algebra of $W$, which has significance in many areas, such as algebraic combinatorics, knot theory, quantum groups, representation theory of $p$-adic groups, and so on. 
%See Section~\ref{sec:Coxeter} for basic facts on Coxeter systems and Hecke algebras.
We generalize the definition of the Hecke algebra of $(W,S)$ from a single parameter to multiple independent parameters.

\begin{definition}\label{def:Hq}
Let $\FF$ be an arbitrary field. The \emph{Hecke algebra $\H(\bq)=\H_S(\mathbf q)$ of the Coxeter system $(W,S)$ with independent parameters $\bq=(q_s\in\FF:s\in S)$} is the (associative) $\FF$-algebra generated by $\{T_s:s\in S\}$ with 
\begin{itemize}
\item
quadratic relations $(T_s-1)(T_s+q_s)=0$ for all $s\in S$,
\item
braid relations $(T_sT_tT_s\cdots)_{m_{st}} = (T_tT_sT_t\cdots)_{m_{st}}$ for all $s,t\in S$.\end{itemize}
Here $(aba\cdots)_m$ is an alternating product of $m$ terms.
\end{definition}

The algebra $\H(\bq)$ can be represented by the Coxeter diagram of $(W,S)$ with extra labels $q_s$ for all vertices $s\in S$. For simplicity we only draw the labels of the vertices but not  the vertices themselves. For example, we draw
\[
\xymatrix @C=8pt{
1 \ar@{=}[r] & 0 \ar@{-}[r] & 1\ar@{-}[r] & 0\ar@{-}[r] & 1\ar@{-}[r] & 0 \ar@{-}[r] & 1 \ar@{-}[r] & 0
}
\]
for the usual Coxeter system of type $B_8$ whose Coxeter diagram is
\[
\xymatrix @C=8pt{
s_1 \ar@{=}[r] & s_2 \ar@{-}[r] & s_3 \ar@{-}[r] & s_4 \ar@{-}[r] & s_5 \ar@{-}[r] & s_6 \ar@{-}[r] & s_7 \ar@{-}[r] & s_8
}
\]
with independent parameters $\bq=(q_{s_i}:1\leq i\leq 8)=(1,0,1,0,1,0,1,0)$.
%We will sometimes identify a vertex subset $I\subseteq S$ with the subdiagram induced by $I$.

The quadratic relations for $\H(\bq)$ can be rewritten as $T_s^2=(1-q_s)T_s+q_s$ for all $s\in S$. If $q_s\ne0$ then $T_s$ is invertible and $T_s^{-1}=q_s^{-1}T_s+1-q_s^{-1}$. For any $w\in W$ with a reduced expression $w=st\cdots r$ where $s,t,\ldots,r\in S$, the element $T_w:=T_sT_t\cdots T_r$ is well defined thanks to the word property of $W$ (see e.g. \cite[Theorem~3.3.1]{BjornerBrenti}). 

If $q_s=q$ for all $s\in S$ then $\H(\bq)$ is the usual Hecke algebra of $(W,S)$ with parameter $q$. If one only insists $q_s=q_t$ whenever $m_{st}$ is odd, then $\H(\bq)$ is %the \emph{generic Hecke algebra} in the sense of Humphreys or 
the \emph{Hecke algebra with unequal parameters} in the sense of Lusztig~\cite{Lusztig}. %, which admits a basis $\{T_w:w\in W\}$. %For convenience we say ``basis'', ``dimension'', ``span'', etc. \emph{without indicating the ground field $\FF$} throughout this paper.
Now we allow $\bq=(q_s\in\FF:s\in S)$ to be arbitrary. The following result may be well known to the experts, and we include a proof for it in the end of Section~\ref{sec:collapse} for completeness.

\begin{theorem}\label{thm:TwBasis}
The algebra $\H(\bq)$ is always spanned by $\{T_w:w\in W\}$, which is indeed a basis if and only if $\H(\bq)$ is a Hecke algebra with unequal parameters, i.e. $q_s=q_t$ whenever $m_{st}$ is odd.
\end{theorem}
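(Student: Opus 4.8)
The plan is to separate the two assertions—the spanning statement and the basis criterion—and to prove the two implications in the criterion by entirely different means: sufficiency by constructing a representation, and necessity by an explicit rank-two collapse.

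\emph{Spanning.} First I would record the multiplication rule
\[
T_w T_s=\begin{cases} T_{ws}, & \ell(ws)>\ell(w),\\ (1-q_s)T_w+q_s T_{ws}, & \ell(ws)<\ell(w),\end{cases}
\]
valid for every $w\in W$ and $s\in S$. The increasing case is the definition of $T_w$ through a reduced word; in the decreasing case one writes $T_w=T_{ws}T_s$ and applies the quadratic relation $T_s^2=(1-q_s)T_s+q_s$. Since $1=T_e$ lies in $\sum_{w}\FF T_w$ and this subspace is visibly stable under right multiplication by each $T_s$, an induction on the length of a word in the generators shows it is the whole algebra. This step uses the braid relations only to guarantee that $T_w$ is well defined, and makes no assumption on the parameters.

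\emph{Sufficiency.} Assume $q_s=q_t$ whenever $m_{st}$ is odd, so that $\H(\bq)$ is a Hecke algebra with unequal parameters. To obtain linear independence I would realize the $T_w$ as linearly independent operators. On $V=\bigoplus_{w\in W}\FF e_w$ define $\lambda_s$ by $\lambda_s e_w=e_{sw}$ if $\ell(sw)>\ell(w)$ and $\lambda_s e_w=(1-q_s)e_w+q_s e_{sw}$ otherwise. Each $\lambda_s$ satisfies the quadratic relation by a one-line check, and the $\lambda_s$ satisfy the braid relations—this is the only place the hypothesis $q_s=q_t$ for odd $m_{st}$ is needed, and it is the classical Bourbaki--Lusztig computation, which I would either cite or carry out in the rank-two dihedral case. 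The universal property then gives an algebra map $\rho\colon\H(\bq)\to\operatorname{End}(V)$ with $\rho(T_w)e_e=e_w$, so the $\rho(T_w)$, hence the $T_w$, are linearly independent; together with spanning this makes $\{T_w\}$ a basis.

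\emph{Necessity.} For the converse I would show that a single odd edge with unequal parameters already forces a collapse, by a computation inside the dihedral parabolic generated by such a pair $s,t$ with $m:=m_{st}$ odd. Let $w_0$ be the longest element of $W_{\{s,t\}}$ and let $u$ be the element of length $m-1$ with reduced word $tst\cdots$; the word property gives the two reduced factorizations $T_{w_0}=T_sT_u=T_uT_t$. Expanding each factorization against the quadratic relations,
\[
T_sT_{w_0}=T_s^2T_u=(1-q_s)T_{w_0}+q_sT_u,\qquad T_{w_0}T_t=T_uT_t^2=(1-q_t)T_{w_0}+q_tT_u,
\]
while associativity gives $T_sT_{w_0}=(T_sT_u)T_t=T_{w_0}T_t$. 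Subtracting yields $(q_t-q_s)(T_{w_0}-T_u)=0$, so if $q_s\neq q_t$ then $T_{w_0}=T_u$ is a nontrivial relation among the $T_w$ (as $w_0\neq u$), and $\{T_w\}$ is not a basis. The genuinely delicate point in the whole argument is the braid-relation verification for the $\lambda_s$ in the sufficiency direction, where the parameter hypothesis is consumed; the necessity direction, by contrast, is short and self-contained once one notices that the two factorizations $T_{w_0}=T_sT_u=T_uT_t$ let one evaluate the same product two ways and isolate the factor $q_t-q_s$.
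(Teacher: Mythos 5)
Your proof is correct, and the necessity direction takes a genuinely different (and leaner) route than the paper's. The spanning step and the sufficiency step match the paper in substance: the paper disposes of sufficiency by citing Lusztig's result on Hecke algebras with unequal parameters, which is precisely the regular-representation construction you sketch via the operators $\lambda_s$ (the one caution there: verifying the braid relations for the $\lambda_s$ directly on all of $V$ is the classical Bourbaki computation, usually organized by introducing right-multiplication operators and checking commutation, with the genuine work happening coset-by-coset for each dihedral parabolic $\langle s,t\rangle$ — so your "rank-two" reduction is the right picture, but it is a reduction to the action on cosets, not literally to the rank-two algebra). The real divergence is necessity. The paper assumes $\{T_w\}$ is a basis, notes that the subalgebra generated by $T_s,T_t$ then has dimension $2m$, and derives a contradiction from its collapse machinery: Theorem~\ref{thm:12} gives dimension $1$ when $q_sq_t\ne0$, while Proposition~\ref{prop:subalgebra} together with Lemma~\ref{lem:H(0,q)} caps the dimension at $2m-3$ when $q_sq_t=0$. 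You instead exhibit an explicit linear dependence: evaluating $T_sT_{w_0}=T_{w_0}T_t$ through the two reduced factorizations $T_{w_0}=T_sT_u=T_uT_t$ and the quadratic relations gives $(q_t-q_s)(T_{w_0}-T_u)=0$, so $q_s\ne q_t$ forces $T_{w_0}=T_u$ with $w_0\ne u$. Your manipulation is in fact the same identity that opens the paper's proof of Theorem~\ref{thm:12} (its base case), but you stop exactly where a dependence among the $T_w$ appears; this treats the cases $q_sq_t\ne0$ and $q_sq_t=0$ uniformly, and needs no invertibility of $T_s$ or $T_t$, no dimension count, and no case split. What the paper's longer route buys is the stronger structural output it needs elsewhere — the collapse $T_r=1$ and the exact dimension $2m-3$ of Lemma~\ref{lem:H(0,q)} feed into Theorems~\ref{thm:collapse} and~\ref{thm:commut} — whereas for Theorem~\ref{thm:TwBasis} alone your argument is shorter and self-contained.
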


In general, we show that the algebra $\H(\bq)$ could be much smaller than the group algebra $\FF W$. %(Theorem~\ref{thm:12}, restated below).  

\begin{theorem}\label{thm:collapse}
If there exist $s,t\in S$ with $m_{st}$ odd such that $q_s$ and $q_t$ are distinct nonzero parameters, then one has $\H_S(\bq) \cong \H_{S\setminus R}(\bq)$ where $R$ consists of all elements $r\in S$ connected to $s$ via some path with odd edge weights and nonzero vertex labels in the Coxeter diagram of $(W,S)$. 
\end{theorem}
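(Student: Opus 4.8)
The plan is to reduce the statement to a local collapse at the edge $\{s,t\}$ and then propagate it across $R$. Write $m=m_{st}$ (odd) and $a=q_s$, $b=q_t$; since $a\ne0$ and $b\ne0$ the generators $T_s,T_t$ are invertible. Let $c:=(T_sT_tT_s\cdots)_{m-1}$ be the alternating word of length $m-1$, a product of invertible generators and hence a unit. Because $m$ is odd, the two sides of the braid relation regroup as $(T_sT_tT_s\cdots)_m=c\,T_s$ and $(T_tT_sT_t\cdots)_m=T_t\,c$, so the braid relation reads precisely $c\,T_s=T_t\,c$, that is $T_t=c\,T_s\,c^{-1}$. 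Thus, with no computation beyond regrouping, $T_s$ and $T_t$ are conjugate in $\H(\bq)$.

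The conceptual heart is the next observation: conjugate elements satisfy the same polynomial identities, so $T_s$ and $T_t$ share a single minimal polynomial $\mu(X)$. The quadratic relations give $\mu(X)\mid(X-1)(X+a)$ and $\mu(X)\mid(X-1)(X+b)$, so $\mu(X)$ divides $\gcd\bigl((X-1)(X+a),(X-1)(X+b)\bigr)$. As $a\ne b$ the linear factors $X+a$ and $X+b$ are coprime, so this gcd equals $X-1$ (a short check disposes of the cases $a=-1$ or $b=-1$ over an arbitrary field). Hence $\mu(X)=X-1$, forcing $T_s=1$ and then $T_t=c\,T_s\,c^{-1}=1$. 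This argument is uniform in the odd integer $m$, which is what makes it preferable to a case analysis on the size of $m_{st}$.

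Next I would propagate the collapse along $R$. If a generator $T_{s'}$ is already known to equal $1$ and $r$ is joined to $s'$ by an odd edge of weight $2j+1$ with $q_r\ne0$, then substituting $T_{s'}=1$ into their braid relation collapses it to $T_r^{\,j}=T_r^{\,j+1}$, and invertibility of $T_r$ gives $T_r=1$. Since every vertex of $R$ is reachable from $s$ along a path of odd edges through nonzero-labelled vertices, induction on such paths yields $T_r=1$ for all $r\in R$.

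Finally I would identify the quotient. Define $\phi\colon\H_S(\bq)\to\H_{S\setminus R}(\bq)$ by $T_u\mapsto T_u$ for $u\notin R$ and $T_r\mapsto1$ for $r\in R$, with candidate inverse $\psi\colon\H_{S\setminus R}(\bq)\to\H_S(\bq)$, $T_u\mapsto T_u$. The only relations whose preservation under $\phi$ is not immediate are the braid relations across an edge $\{r,u\}$ with $r\in R$ and $u\notin R$. If $m_{ru}$ is even both sides map to the same power of $T_u$; if $m_{ru}=2j+1$ is odd then necessarily $q_u=0$ (otherwise the defining path to $r$ would extend to $u$, putting $u\in R$), and the quadratic relation becomes $T_u^2=T_u$, which makes the resulting identity $T_u^{\,j}=T_u^{\,j+1}$ automatic. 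This last point is the step I expect to be the main obstacle: one must check that collapsing $R$ imposes no spurious relation on the surviving generators, and it is exactly the idempotency forced by $q_u=0$ on odd mixed edges that prevents the collapse from leaking out of $R$. Granting this, $\phi$ is well defined, $\psi$ is well defined since its relations form a subset of those of $\H_S(\bq)$, and because $T_r=1$ already holds in $\H_S(\bq)$ the two maps are mutually inverse, giving $\H_S(\bq)\cong\H_{S\setminus R}(\bq)$.
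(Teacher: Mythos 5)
Your proposal is correct, and its global architecture coincides with the paper's: force $T_s=T_t=1$ at the offending odd edge, propagate $T_r=1$ along odd nonzero paths by induction (your step $T_r^{\,j+1}=T_r^{\,j}$ with $T_r$ invertible is exactly the paper's inductive step), and then identify $\H_S(\bq)$ with $\H_{S\setminus R}(\bq)$. Where you genuinely diverge is the local collapse. The paper (Theorem~\ref{thm:12}(i)) multiplies the braid relation by $T_r$, expands both sides via the quadratic relations, and cancels the invertible factors from $(q_r-q_t)(T_tT_rT_t\cdots)_{m-1}=(q_r-q_t)(T_rT_tT_r\cdots)_m=(q_r-q_t)(T_tT_rT_t\cdots)_m$; you instead read the odd braid relation as the conjugacy $T_t=c\,T_s\,c^{-1}$ with $c=(T_sT_t\cdots)_{m-1}$ a unit, so $T_s,T_t$ share a minimal polynomial dividing $\gcd\bigl((X-1)(X+q_s),(X-1)(X+q_t)\bigr)=X-1$. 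This is cleaner and uniform in $m$; one could compress it further, since $T_t$ then satisfies both quadratics and subtracting them gives $(q_s-q_t)(T_t-1)=0$ outright, and the one tacit hypothesis --- $\H(\bq)\neq0$, needed for the minimal polynomial to be nonconstant --- is harmless because $T_s\mapsto1$ for all $s\in S$ is always a one-dimensional representation (and in the zero algebra the conclusion is vacuous). For the final identification, the paper invokes Proposition~\ref{prop:subalgebra}: $S\setminus R$ is admissible, so the subalgebra generated by $\{T_u:u\in S\setminus R\}$ is isomorphic to $\H_{S\setminus R}(\bq)$, and $T_r=1$ for $r\in R$ shows this subalgebra is all of $\H(\bq)$. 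Your explicit pair $\phi,\psi$ inlines precisely that argument, and you correctly isolate the same crux that the paper's admissibility hypothesis encodes: on an odd mixed edge $\{r,u\}$ with $r\in R$ one must have $q_u=0$ (else $u\in R$), so $T_u^2=T_u$ makes $T_u^{\,j+1}=T_u^{\,j}$ hold in the target and the collapse imposes no spurious relation on the surviving generators. Net effect: the same three-step strategy, with a slicker key lemma and a self-contained substitute for the paper's parabolic-subalgebra machinery.
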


Thus we always assume without loss of generality that $\H(\bq)$ is \emph{collapse-free}, i.e. if $m_{st}$ is odd and $q_s\ne q_t$ then at least one of $q_s$ and $q_t$ is $0$. We next characterize when $\H(\bq)$ is commutative. % (Theorem~\ref{thm:commutative}, restated below). 

\begin{theorem}\label{thm:commut}
The algebra $\H(\bq)$ is collapse-free and commutative if and only if $(W,S)$ is simply laced and exactly one of $q_s$ and $q_t$ is $0$ for any pair of elements $s,t\in S$ with $m_{st}=3$.
\end{theorem}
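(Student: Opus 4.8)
The plan is to reduce commutativity of $\H(\bq)$ to pairwise commutativity of the generators and then to analyze each rank-two subsystem separately. Since $\{T_s:s\in S\}$ generates $\H(\bq)$, the algebra is commutative if and only if $T_sT_t=T_tT_s$ for every pair $s,t\in S$. When $m_{st}=2$ the braid relation is exactly $T_sT_t=T_tT_s$, so such pairs are harmless and the whole question concentrates on pairs with $m_{st}\geq3$. For a fixed such pair I will work with the subalgebra generated by $T_s,T_t$, a homomorphic image of the dihedral algebra $\H_{\{s,t\}}(q_s,q_t)$, and decide there whether $T_sT_t=T_tT_s$, keeping the collapse-free hypothesis available so that Theorem~\ref{thm:collapse} excludes two distinct nonzero parameters on an odd edge.

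For sufficiency, assume $(W,S)$ is simply laced and that on each edge with $m_{st}=3$ exactly one of $q_s,q_t$ is $0$; collapse-freeness is then immediate. It remains to commute a pair with $m_{st}=3$, say $q_s=0$ (so $T_s^2=T_s$) and $q_t\neq0$ (so $T_t^2=(1-q_t)T_t+q_t$). Left- and right-multiplying the braid relation $T_sT_tT_s=T_tT_sT_t$ by $T_s$ and using $T_s^2=T_s$ yields the identities $T_sT_tT_s=T_sT_tT_sT_t$ and $T_sT_tT_s=T_tT_sT_tT_s$. Multiplying the braid relation on the left by $T_t$, substituting $T_t^2=(1-q_t)T_t+q_t$, and using the braid relation together with the second identity to rewrite $T_tT_sT_t$ and $T_tT_sT_tT_s$ as $T_sT_tT_s$, one obtains $q_t(T_sT_tT_s-T_sT_t)=0$; the symmetric computation (multiplying on the right by $T_t$ and using the first identity) gives $q_t(T_sT_tT_s-T_tT_s)=0$. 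Cancelling $q_t\neq0$ produces $T_sT_t=T_sT_tT_s=T_tT_s$, so every pair commutes.

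For necessity, assume $\H(\bq)$ is collapse-free and commutative. On an edge with $m_{st}=3$, if $q_s=q_t$ (whether $0$ or not) then, by Theorem~\ref{thm:TwBasis} applied to the rank-two system $(W_{\{s,t\}},\{s,t\})$, the set $\{T_w:w\in W_{\{s,t\}}\}$ is a basis, whence $T_sT_t=T_{st}\neq T_{ts}=T_tT_s$, contradicting commutativity; hence $q_s\neq q_t$, and collapse-freeness forces exactly one of them to be $0$, which is the asserted condition. To force $(W,S)$ to be simply laced I rule out $m_{st}\geq4$. If $m_{st}$ is even, or odd with $q_s=q_t$, then again $\{T_w\}$ is a basis of the rank-two algebra and $T_{st}\neq T_{ts}$. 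The remaining case is $m_{st}$ odd and $\geq5$ with exactly one parameter, say $q_s$, equal to $0$; here I exhibit the two-dimensional representation $T_s\mapsto\left(\begin{smallmatrix}1&0\\0&0\end{smallmatrix}\right)$ and $T_t\mapsto\left(\begin{smallmatrix}0&y\\z&1-q_t\end{smallmatrix}\right)$ with $yz=q_t\neq0$. One checks that $T_t$ has the correct eigenvalues $1,-q_t$, that $T_sT_tT_s=0$ and $T_tT_sT_tT_s=0$, so that both sides of the length-$m$ braid relation vanish for every odd $m\geq5$, while $T_sT_t=\left(\begin{smallmatrix}0&y\\0&0\end{smallmatrix}\right)$ and $T_tT_s=\left(\begin{smallmatrix}0&0\\z&0\end{smallmatrix}\right)$ are distinct; thus $T_s$ and $T_t$ do not commute.

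The main obstacle is the transfer from these rank-two computations to the full algebra: strictly speaking the arguments above detect (non)commutativity inside the dihedral algebra $\H_{\{s,t\}}(q_s,q_t)$, and to draw the same conclusion in $\H_S(\bq)$ I must know that deciding $T_sT_t=T_tT_s$ is a local question, i.e. that the rank-two parabolic embeds --- equivalently, that the linear relations among the spanning set $\{T_w\}$ of Theorem~\ref{thm:TwBasis} restrict correctly to $W_{\{s,t\}}$, or that the two-dimensional module above induces faithfully to an $\H_S(\bq)$-module. I expect this localization to be the delicate point, to be secured from the structure of the spanning set. A secondary subtlety, highlighted by the explicit representation, is that an idempotent generator forces commutativity when $m_{st}=3$ but no longer does so once $m_{st}\geq5$ (indeed the same representation violates the braid relation precisely when $m_{st}=3$), which is exactly what separates the simply laced case from the rest.
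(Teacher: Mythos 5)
Your sufficiency argument is correct and is in substance the paper's own: your relation-chasing on an edge with $m_{st}=3$, $q_s=0\neq q_t$ is exactly the $k=1$ case of Lemma~\ref{lem:01=0}, and since it uses only the defining relations it is valid in the full algebra $\H(\bq)$ — no localization is needed on that side. Your two-dimensional representation for odd $m_{st}\geq5$ is also correct as a statement about the rank-two algebra (I checked the eigenvalues, $T_sT_tT_s=0$, $T_tT_sT_tT_s=0$, and the failure of the braid relation at $m=3$), and it is a slicker substitute for the paper's Lemma~\ref{lem:H(0,q)}, which instead exhibits $T_sT_t$ and $T_tT_s$ as distinct members of an explicit basis of size $2m-3$.

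The necessity direction, however, contains the gap you yourself flagged, and it is genuine rather than routine: every noncommutativity statement you make lives in the dihedral algebra $\H_{\{s,t\}}(\bq)$, whereas the subalgebra of $\H(\bq)$ generated by $T_s,T_t$ is in general only a \emph{quotient} of it (Proposition~\ref{prop:subalgebra} gives a surjection always, an isomorphism only when $\{s,t\}$ is admissible). The pair $\{s,t\}$ need not be admissible under your hypotheses: for instance $q_s=q_t\neq 0$ with a third generator $r$ having $m_{rs}$ odd and $q_r=q_s$ is fully compatible with collapse-freeness, and then nothing you have proved prevents $T_sT_t=T_tT_s$ from holding in $\H(\bq)$ while failing in $\H_{\{s,t\}}(\bq)$. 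Nor does your two-dimensional module extend for free: the natural extension (send each remaining generator to the scalar $1$ or $0$ according to whether its parameter is nonzero, as in the map $\psi$ of Proposition~\ref{prop:subalgebra}) requires, for any neighbor $r$ of $t$ with $m_{rt}$ odd and $q_r\neq0$, the identity $T_t^{(m+1)/2}=T_t^{(m-1)/2}$, which your matrix for $T_t$ violates since it has the eigenvalue $-q_t\neq 0$. The paper closes precisely this hole before doing any rank-two computation: in the proof of Theorem~\ref{thm:commutative} it first shows $\{s,t\}$ \emph{is} admissible, by taking a maximal subset $R\ni s$ with constant parameter along odd edges; maximality plus collapse-freeness make $R$ admissible, so $\H_R(\bq)$ embeds in $\H(\bq)$ and inherits commutativity, yet by Theorem~\ref{thm:TwBasis} it has the basis $\{T_w:w\in W_R\}$, forcing $m_{ab}\leq 2$ for all $a,b\in R$ and thereby excluding the problematic $r$. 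Only after this does the embedding of $\H_{\{s,t\}}(\bq)$ legitimize the rank-two analysis. Without this admissibility step (or some other faithful extension of your module to $\H_S(\bq)$), your necessity direction does not go through.
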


We construct a basis for $\H(\bq)$ (not necessarily commutative) when $(W,S)$ is simply laced (Theorem~\ref{thm:SimplyLaced}). It implies the dimension a commutative $\H(\bq)$, %(Corollary~\ref{cor:ComDim}, restated below), 
giving one motivation for our study of the commutative case.

\begin{corollary}\label{cor:DimFib}
Let $G$ be the underlying graph of the Coxeter diagram of $(W,S)$, and let $\II(G)$ be the set of all independent sets in $G$. If $\H(\bq)$ is collapse-free and commutative then its dimension is $|\II(G)|$ (the Merrifield-Simmons index of the graph $G$). In particular, if $(W,S)$ is of type $A_n$ then the dimension of $\H(\bq)$ is the Fibonacci number $F_{n+2}$.
\end{corollary}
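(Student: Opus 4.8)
The plan is to combine the commutativity characterization (Theorem~\ref{thm:commut}) with the simply-laced basis (Theorem~\ref{thm:SimplyLaced}) and then reduce the final count to a graph-theoretic one. First I would invoke Theorem~\ref{thm:commut}: since $\H(\bq)$ is assumed collapse-free and commutative, the Coxeter system $(W,S)$ must be simply laced, so every edge of $G$ carries weight $m_{st}=3$, and for each such edge exactly one of $q_s,q_t$ vanishes. Because $(W,S)$ is simply laced, Theorem~\ref{thm:SimplyLaced} supplies an explicit $\FF$-basis of $\H(\bq)$, so $\dim\H(\bq)$ equals the cardinality of that basis; the whole problem becomes identifying this cardinality with $|\II(G)|$ in the commutative case.

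For the spanning direction I would produce the candidate set $\{T_A:=\prod_{s\in A}T_s : A\in\II(G)\}$, which is well defined since $\H(\bq)$ is commutative. Two reductions cut an arbitrary monomial down to such a $T_A$. The quadratic relation $T_s^2=(1-q_s)T_s+q_s$ forces every generator to occur at most to the first power (and produces the empty product $T_\emptyset=1$, corresponding to $A=\emptyset$). For an edge $\{s,t\}$, rewriting the braid relation $T_sT_tT_s=T_tT_sT_t$ using commutativity gives $(q_t-q_s)T_sT_t=q_tT_s-q_sT_t$; using that exactly one parameter is $0$ then yields $T_sT_t=T_s$ when $q_s=0$ (and $T_sT_t=T_t$ when $q_t=0$). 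Hence whenever both endpoints of an edge appear in a monomial, one factor can be deleted, so every monomial collapses to some $T_A$ with $A$ independent. This shows $\{T_A:A\in\II(G)\}$ spans $\H(\bq)$ and therefore $\dim\H(\bq)\le|\II(G)|$.

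The main obstacle is the linear independence of the $T_A$, and here I would lean on Theorem~\ref{thm:SimplyLaced}: its basis, specialized to the collapse-free commutative situation, should be exactly indexed by the independent sets of $G$, giving the reverse inequality $\dim\H(\bq)\ge|\II(G)|$ and forcing the spanning set $\{T_A\}$ to be a basis. (Alternatively one could exhibit a faithful module on which the $T_A$ act by independent operators, but matching against the already-constructed basis of Theorem~\ref{thm:SimplyLaced} is cleaner.) Finally, for type $A_n$ the graph $G$ is the path $P_n$ on $n$ vertices, and sorting independent sets by whether the last vertex is used gives the recursion $i(P_n)=i(P_{n-1})+i(P_{n-2})$ with initial values $i(P_0)=1$ and $i(P_1)=2$; this is the Fibonacci recursion, yielding $i(P_n)=F_{n+2}$ and hence $\dim\H(\bq)=F_{n+2}$.
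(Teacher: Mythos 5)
Your proposal is correct and takes essentially the same route as the paper: Theorem~\ref{thm:commutative} forces the simply laced structure with exactly one zero parameter per edge, and the basis of Theorem~\ref{thm:SimplyLaced} specializes (via Lemma~\ref{lem:Si}, whose blocks $S_i$ all become singletons, so that $W(\bq)$ is in bijection with $\II(G)$) to exactly $\{T_I: I\in\II(G)\}$, followed by the identical Fibonacci recursion for the path. Your separate spanning argument via $(q_t-q_s)T_sT_t=q_tT_s-q_sT_t$ is sound (it re-derives the $k=1$ case of Lemma~\ref{lem:01=0}) but redundant, since the specialized basis already consists precisely of the monomials $T_A$ with $A$ independent.
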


\begin{example}
Let $\FF$ be a field with at least $3$ distinct elements 0, 1, and c. Let $\H(\bq)$ be given by the diagram below.
\[ \scriptsize
\xymatrix @R=0pt @C=8pt {
0 \ar@{-}[rd] & & & & \framebox{c}\ar@{=}[r] & 1\ar@{-}[r] & 0\ar@{-}[r] & 1\\
 & \framebox{1} \ar@{-}[r] & \framebox{1}\ar@{-}[r] & \framebox{c} \ar@{-}[rd] \ar@{-}[ru] \\ 
\framebox{1} \ar@{-}[ru] & & & & {\rm c} \ar@{=}[r] & 1\ar@{=}[r] & \framebox{c}\ar@{-}[r] & \framebox{1} \\
}
%\qquad \qquad\xymatrix @R=8pt @C=10pt {
%0 \ar@{-}[rd] & & & & 2 & 1\ar@{-}[r] & 0\ar@{-}[r] & 1\\
 %& 1 \ar@{-}[r] & 1 \ar@{-}[r] & 2 \ar@{-}[rd] \ar@{-}[ru] \\ 
%1 \ar@{-}[ru] & & & & 2 & 1 & 2\ar@{-}[r] & 1 
%}
\]
Removing the boxed elements gives $3$ connected components $0$, $\xymatrix @C=8pt{ {\rm c} \ar@{=}[r] & 1,}$ and $\xymatrix @C=8pt{ 1 \ar@{-}[r] & 0 \ar@{-}[r] & 1. }$ Thus the dimension of $\H(\bq)$ is $2\cdot 8\cdot 5 =80$ by Theorem~\ref{thm:collapse}, Theorem~\ref{thm:commut}, and Corollary~\ref{cor:DimFib}. 
\end{example}

Theorem~\ref{thm:commut} shows that if $\H(\bq)$ is collapse-free and commutative then the Coxeter diagram of $(W,S)$ must be a simply laced bipartite graph. Computations in {\sf Magma} suggest the following conjecture, which is verified for type A (Theorem~\ref{thm:MinDimTypeA}). This gives another motivation for our study of the commutative case.

\begin{conjecture}
If the Coxeter diagram of $(W,S)$ is a simply laced bipartite graph $G$, then a collapse-free $\H(\bq)$ has  minimum dimension equal to $|\II(G)|$, which is attained when $\H(\bq)$ is commutative.
\end{conjecture}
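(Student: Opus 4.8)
The plan is to split the claim into a matching upper and lower bound, the upper bound being essentially already available. Since $G$ is bipartite, fix a proper $2$-coloring of $G$ and assign $q_s=0$ to one color class and a nonzero value to the other; then every edge (all of which have $m_{st}=3$ in the simply laced case) has exactly one endpoint labelled $0$, so by Theorem~\ref{thm:commut} this $\H(\bq)$ is collapse-free and commutative, and by Corollary~\ref{cor:DimFib} it has dimension $|\II(G)|$. Hence the value $|\II(G)|$ is attained, and the whole content of the conjecture reduces to the \emph{lower bound}: every collapse-free $\H(\bq)$ with diagram $G$ satisfies $\dim_\FF\H(\bq)\ge|\II(G)|$.

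For the lower bound I would exhibit $|\II(G)|$ elements that stay linearly independent for every admissible choice of parameters. The natural candidates are indexed by independent sets: if $I\in\II(G)$ then the generators $\{T_s:s\in I\}$ pairwise commute (distinct non-adjacent $s,t$ have $m_{st}=2$), so $T_I:=\prod_{s\in I}T_s$ is well defined. I would aim to prove that $\{T_I:I\in\II(G)\}$ is linearly independent in $\H(\bq)$; together with the upper bound this pins the minimum at $|\II(G)|$ and shows it is achieved exactly in the commutative case. To get independence I would build a single $\H(\bq)$-module $M$ with basis $\{v_I:I\in\II(G)\}$ on which the generators act so that $T_I\cdot v_\emptyset = v_I + (\text{combination of }v_J\text{ with }|J|<|I|)$; the resulting unitriangularity forces the $T_I$ to be independent. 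The action should send $v_I\mapsto v_{I\cup\{s\}}$ when $I\cup\{s\}$ stays independent and otherwise straighten back into smaller sets using the quadratic relation $T_s^2=(1-q_s)T_s+q_s$, and the real work is checking that such a straightening rule is consistent with the braid relations.

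An alternative route, and the one that succeeds for type $A$ in Theorem~\ref{thm:MinDimTypeA}, is an induction mirroring the deletion recursion $|\II(G)|=|\II(G-v)|+|\II(G-N[v])|$ for the Merrifield--Simmons index, where $N[v]$ is the closed neighborhood of a chosen vertex $v$. Writing $s$ for the generator at $v$, one decomposes $\H(\bq)=\H'+\H'T_s$ as a left module over the subalgebra $\H'$ generated by $S\setminus\{v\}$, whose diagram is $G-v$, and then isolates the genuinely new part of $\H'T_s$, which should be governed by the still-smaller subalgebra attached to $G-N[v]$. Feeding the two inductive bounds $\dim\H'\ge|\II(G-v)|$ and the analogous bound for the $G-N[v]$ piece through this decomposition recovers $\dim\H(\bq)\ge|\II(G)|$; for type $A$ the graph is a path, deleting an endpoint removes two vertices through $N[v]$, and the recursion is exactly $F_{n+2}=F_{n+1}+F_n$.

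The main obstacle is precisely what separates the settled type-$A$ case from the open general case: controlling the ``new'' summand after deleting a vertex. In a path the relations interact with the deleted end in an essentially unique way, but for a general bipartite $G$ the vertex $v$ can have several neighbors with independently chosen zero or nonzero labels, so $\H'T_s$ may meet many branches of $\H'$ at once, and ruling out an unexpected collapse---equivalently, keeping the spanning set of size $|\II(G)|$ independent---is the crux. I expect the cleanest way through is to make the module $M$ of the second paragraph fully explicit, since choosing the correct straightening of $T_s v_I$ when $s$ has a neighbor already in $I$ is exactly where the bipartite hypothesis ought to be what guarantees consistency with the braid relations.
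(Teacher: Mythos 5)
First, be aware of the status of the statement: this is a \emph{conjecture} in the paper, proved there only for type $A$ (Theorem~\ref{thm:MinDimTypeA}) and otherwise supported by computer experiment, so there is no complete proof to match. Your attainability half is correct and is exactly the paper's reasoning: a proper $2$-coloring with parameters $0$ on one class and nonzero on the other is collapse-free and commutative by Theorem~\ref{thm:commut}, and has dimension $|\II(G)|$ by Corollary~\ref{cor:DimFib}. Your second route for the lower bound (deletion recursion on a vertex) is essentially the paper's type-$A$ induction, and you correctly identify that controlling the ``new'' summand for a general bipartite graph is the open crux; as a proof of the general conjecture this remains a sketch with an acknowledged gap, which is consistent with the problem's actual status.

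However, your first route is not merely incomplete but provably broken: the family $\{T_I:I\in\II(G)\}$ is \emph{not} linearly independent for a general collapse-free $\H(\bq)$, so no module with the unitriangular straightening you describe can exist. Take type $A_3$ with diagram $\xymatrix @C=8pt{s_1 \ar@{-}[r] & s_2 \ar@{-}[r] & s_3}$ and $\bq=(1,1,0)$; this is collapse-free since the only edge with distinct end parameters has $q_{s_3}=0$. Lemma~\ref{lem:01=0} applied to the path $(s_3,s_2,s_1)$ (here $q_{s_1},q_{s_2}\neq 0$, $m_{s_3s_2}=3$, $m_{s_3s_1}=2$, $q_{s_3}=0$) gives $T_{s_3}T_{s_1}=T_{s_1}T_{s_3}=T_{s_3}$, hence $T_{\{s_1,s_3\}}=T_{\{s_3\}}$: two distinct independent sets index the same algebra element, so your candidate set spans at most $4<5=|\II(P_3)|$ dimensions. (The conjecture still holds here, since $\dim\H(\bq)=7$ by Theorem~\ref{thm:SimplyLaced}, but the witnesses for the lower bound must be the elements $T_{w_1}\cdots T_{w_k}$ indexed by the set $W(\bq)$ with its domination condition, not by independent sets.) In your module language, $T_{s_1}$ would have to send $v_{\{s_3\}}$ to $v_{\{s_1,s_3\}}$ since the union is independent, while the algebra relation forces $T_{s_1}T_{s_3}v_\emptyset=T_{s_3}v_\emptyset$; these are incompatible, and this is precisely why the paper's type-$A$ count in Theorem~\ref{thm:MinDimTypeA} proceeds through $W(\bq)$ and factorial-sized blocks $((b+1)!-1)$ rather than through a collection of monomials $T_I$ uniform in $\bq$.
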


For the irreducible simply laced Coxeter systems of type $A$, $D$, $\widetilde A$, and $\widetilde D$, the dimensions of collapse-free and commutative Hecke algebras $H(\bq)$ are given below, which all happen to satisfy the Fibonacci recurrence.

\begin{center}
\begin{tabular}{|c||c|c|c|}
\hline
Coxeter diagram & Dimensions  & Known as & OEIS entry \\
\hline
$A_n$ ($n\geq1$) & 2,3,5,8,13,\ldots & Fibonacci numbers $F_{n+2}$ & A000045 \\
\hline
$D_n$ ($n\geq2$)& 4,5,9,14,23,\ldots & ? & A000285 \\
\hline
\raisebox{-3pt}{$\widetilde A_n$ ($n\geq 3$)} & \raisebox{-2pt}{4,7,11,18,29,\ldots} & \raisebox{-2pt}{Lucas numbers $L_n$} & \raisebox{-2pt}{A000032} \\
\hline
\raisebox{-3pt}{$\widetilde D_n$ ($n\geq 5$)} & \raisebox{-2pt}{17, 24, 41,65,106,\ldots} & \raisebox{-2pt}{?} & \raisebox{-2pt}{A190996}\\	
\hline
\end{tabular}
\end{center}

Note that the Coxeter diagram of $\widetilde A_n$ is a cycle of length $n$, which is bipartite if and only if $n$ if even. However, the dimensions given above for $\widetilde A_n$ make sense for all integers $n\geq1$. This is because we can define a commutative algebra $\H(G,R)$ whose dimension is $|\II(G)|$ for any (unweighted) simple graph $G$ with vertex set $V(G)$ and edge set $E(G)$ and for any $R\subseteq V(G)$, such that a collapse-free and commutative Hecke algebra $\H(\bq)$ is isomorphic to $\H(G,R)$ where $G$ is the Coxeter diagram of the simply laced $(W,S)$ and $R=\{s\in S:q_s=-1\}$. This algebra $\H(G,R)$ is defined as the quotient of the polynomial algebra $\FF[x_v:v\in V(G)]$ by its ideal generated by
%\begin{equation}\label{eq:HGR}
\[
\{x_r^2: r\in R\} \cup
\{x_v^2-x_v: v\in V(G)\setminus R\} \cup
\{x_ux_v: uv\in E(G)\}.
\]%\end{equation}
It is also a quotient of the \emph{Stanley-Reisner ring of the independence complex of $G$}~\cite{CookNagel}. 

We show the following results on the representation theory of $\H(G,R)$. The projective indecomposable $\H(G,R)$-modules are indexed by $\II(G-R)$, where $G-R$ is the graph obtained form $G$ by deleting $R$ and all edges incident to $R$. The simple $\H(G,R)$-modules are all one-dimensional and also indexed by $\II(G-R)$. The Cartan matrix of $\H(G,R)$ is a diagonal matrix. The algebra $\H(G,R)$ is semisimple if and only if $R=\emptyset$.

We next apply the above results to type $A$. Let $G=P_{n-1}$ be a path with $n-1$ vertices. One sees that the dimension of the algebra $\H(P_{n-1},R)$ is equal to the Fibonacci number $F_{n+1}$. We further assume that this algebra is semisimple, i.e. $R=\emptyset$, and write $\H_n:=\H(P_{n-1},\emptyset)$. If $\cha(\FF)\ne2$ then $\H_n$ is isomorphic to the Hecke algebra $\H(\bq)$ of the Coxeter system of type $A_{n-1}$ with independent parameters $\bq=(0,1,0,1,\ldots)$ or $\bq=(1,0,1,0,\ldots)$. We summarize our results on the algebra $\H_n$ below. The reader who is familiar with the representation theory of the symmetric group $\SS_n$ and/or the 0-Hecke algebra $\H_n(0)$ can see certain features of our results in analogy with $\SS_n$ and/or $H_n(0)$.

The semisimple commutative algebra $\H_n$ has $F_{n+1}$ many non-isomorphic simple modules, which are all one-dimensional and indexed by compositions of $n$ with internal parts larger than $1$. The \emph{Grothendieck group} $G_0(\H_n)$ of finite dimensional representations of $\H_n$ is a free abelian group on these simple $\H_n$-modules. The tower of algebras $\H_\bullet: \H_0\hookrightarrow \H_1\hookrightarrow \H_2\hookrightarrow \cdots$ has a \emph{Grothendieck group}
\[
G_0(\H_\bullet):=\bigoplus_{n\geq0} G_0(\H_n)
\] 
with a product and a coproduct given by the induction and restriction along the embeddings $\H_m\otimes\H_n\hookrightarrow\H_{m+n}$.

Although \emph{not} a bialgebra, $G_0(\H_\bullet)$ has a self-dual basis consisting of simple $\H_n$-modules for all $n\geq0$. We provide explicit formulas for the structure constants of the product and coproduct of $G_0(\H_\bullet)$ in terms of this self-dual basis, which are naturally all positive. This result connects $G_0(\H_\bullet)$ to the Grothendieck groups of the finite dimensional (projective) representations of the 0-Hecke algebras $\H_n(0)$, or equivalently, the dual Hopf algebras $\NSym$ of \emph{noncommutative symmetric functions} and $\QSym$ of \emph{quasisymmetric functions}. It turns out that $G_0(\H_\bullet)$ is a quotient algebra of $\NSym$ and a subcoalgebra of $\QSym$, but its antipode satisfies a different rule than the antipodes of $\QSym$ and $\NSym$. The \emph{Bratteli diagram} of the tower $\H_\bullet$ is a binary tree on compositions with internal parts larger than $1$.

This paper is structured as follows. We first provide preliminaries in Section~\ref{sec:pre}. Then we discuss when $\H(\bq)$ collapses or becomes commutative in Section~\ref{sec:collapse}. We study the algebra $\H(\bq)$ of a simply laced Coxeter system in Section~\ref{sec:simply-laced}, and investigate the simply laced bipartite case in Section~\ref{sec:bipartite}. We provide more results on the commutative case in Section~\ref{sec:H(G,R)}, and give the type A specialization in Section~\ref{sec:H01}. Finally we give remarks and questions in Section~\ref{sec:future}.

\section{Preliminaries}\label{sec:pre}

\subsection{Coxeter groups and Hecke algebras}\label{sec:Coxeter}
A \emph{Coxeter group} is a group with the following presentation
\[
W:=\langle S:s^2=1,\ (sts\cdots)_{m_{st}}=
(tst\cdots)_{m_{st}},\ \forall s,t\in S,\ s\ne t \rangle
\]
where the generating set $S$ is finite, $m_{st}\in\{2,3,\ldots\}\cup\{\infty\}$, and $(aba\cdots)_m$ is an alternating product of $m$ terms. By convention no relation is imposed between $s$ and $t$ if $m_{st}=\infty$. The pair $(W,S)$ is called a \emph{Coxeter system}.

The Coxeter diagram of $(W,S)$ is an edge-weighted graph whose vertices are the elements in $S$ and whose edges are the unordered pairs $\{s,t\}$ with weight $m_{st}$ for all $s,t\in S$ such that $m_{st}\geq 3$, $s\ne t$. An edge with weight $m_{st}<\infty$ is often drawn as $m_{st}-2$ many multiple edges between $s$ and $t$. An edge is \emph{simply laced} if its weight is $3$. If every edge is simply laced then the Coxeter system $(W,S)$ and its Coxeter diagram are both called \emph{simply laced}. %By abuse of notation we sometimes write $S$ for the Coxeter diagram of $(W,S)$.

An element $w$ in $W$ can be written as a product of elements in $S$. Among all such expressions the shortest ones are called \emph{reduced}, and the length of a reduced expression of $w$ is called the \emph{length} of $w$ and denoted by $\ell(w)$. A \emph{nil-move} deletes $s^2$ and a \emph{braid-move} replaces $(sts\cdots)_{m_{st}}$ with $(tst\cdots)_{m_{st}}$ in the expressions of $w\in W$ as products of elements in $S$. By \cite[Theorem~3.3.1]{BjornerBrenti}, $W$ satisfies the following word property.

\vskip3pt\noindent\textbf{Word Property.}  
\emph{Any expression of $w\in W$ as a product of elements in $S$ can be transformed into a reduced expression of $w$ by braid-moves and nil-moves, and every pair of reduced expressions for $w$ can be connected via braid-moves.}
\vskip3pt

A subset $I\subseteq S$ generates a \emph{parabolic subgroup} $W_I:=\langle I\rangle$ of $W$. The pair $(W_I,I)$ is a Coxeter system whose Coxeter diagram is the edge-weighted subgraph of the Coxeter diagram of $(W,S)$ induced by the vertex subset $I\subseteq S$. If $S_1,\ldots,S_k$ are the vertex sets of the connected components of the Coxeter diagram of $(W,S)$ then $W = W_{S_1}\times\cdots\times W_{S_k}$. Thus $(W,S)$ is \emph{irreducible} if its Coxeter diagram is connected. 

There is a well known classification for finite irreducible Coxeter groups, among which type A is of particular interest. The symmetric group $\SS_n$ is the Coxeter group of type $A_{n-1}$ with generating set $S$ consisting of the adjacent transpositions $s_i:=(i,i+1)$ for $i=1,\ldots,n-1$. The Coxeter diagram of $\SS_n$ is the path $\xymatrix @C=9pt {s_1 \ar@{-}[r] & s_2 \ar@{-}[r] & \cdots \ar@{-}[r] & s_{n-1}}$.
  
The \emph{(Iwahori-)Hecke algebra} $\H_S(q)$ of a Coxeter system $(W,S)$ is a one-parameter deformation of the group algebra of $W$. Let $\FF$ be a field and let $q\in\FF$. Then $\H_S(q)$ is defined as the $\FF$-algebra generated by $\{T_s:s\in S\}$ with 
\begin{itemize}
\item
quadratic relations: $(T_s-1)(T_s+q) = 1,\ \forall s\in S$,
\item
braid relations: $(T_sT_tT_s\cdots)_{m_{st}} = (T_tT_sT_t\cdots)_{m_{st}},\ \forall s, t\in S,\ s\ne t$.
\end{itemize}
The specialization of the Hecke algebra $\H_S(q)$ at $q=1$ gives the group algebra $\FF W$, and the specialization at $q=0$ gives the \emph{0-Hecke algebra} $\H_S(0)$. If $(W,S)$ is of type $A_{n-1}$ then we write $\H_n(q):=\H_S(q)$ and $\H_n(0):=\H_S(0)$.

If $w\in W$ has a reduced expression $w=st\cdots r$, where $s,t,\ldots,r\in S$, then $T_w:=T_sT_t\cdots T_r$ is well defined thanks to the word property of $W$. It is well known that $\{T_w:w\in W\}$ is a basis for $\H_S(q)$. One has
\begin{equation}\label{eq:reg}
T_s T_w = \begin{cases}
(1-q)T_w+qT_{sw}, & \ell(sw)<\ell(w), \\
T_{sw}, & \ell(sw)>\ell(w),
\end{cases}
\end{equation}
for all $s\in S$ and $w\in W$. This gives the \emph{regular representation} of $\H_S(q)$.

\subsection{Representation theory of associative algebras}
We review some general results on the representation theory of associative algebras; see e.g.~\cite[\S I]{ASS}. Let $\FF$ be a field and let $A$ be a finite dimensional (unital associative) $\FF$-algebra. Let $M$ be a (left) $A$-module. If $M$ has no submodules except $0$ and itself then $M$ is \emph{simple}. If $M$ is a direct sum of simple $A$-modules then $M$ is \emph{semisimple}. The algebra $A$ is \emph{semisimple} if it is semisimple as an $A$-module. Every module over a semisimple algebra is also semisimple. If $M$ cannot be written as a direct sum of two nonzero $A$-submodules, then $M$ is \emph{indecomposable}. If $M$ is a direct summand of a free $A$-module, then $M$ is \emph{projective}. 

The \emph{(Jacobson) radical} ${\rm rad}(M)$ of $M$ is the intersection of all maximal $A$-submodules of $M$, which turns out to be the smallest submodule $N$ of $M$ such that $M/N$ is semisimple. One has
$
\rad(M_1\oplus M_2) = {\rm rad}(M_1)\oplus\rad(M_2)
$
if $M_1$ and $M_2$ are two $A$-modules. The radical of the algebra $A$ is defined as $\rad(A)$ with $A$ itself viewed as an $A$-module. If $A$ happens to be commutative then all nilpotent elements in $A$ form an ideal of $A$, called the \emph{nilradical} of $A$, which is always contained in $\rad(A)$. %if the equality holds then $A$ is called a \emph{Jacobson ring}
The \emph{top} of $M$ is the quotient module ${\rm top}(M):=M/{\rm rad}(M)$. The \emph{socle} ${\rm soc}(M)$ of $M$ is the sum of all minimal submodules of $M$, which is the largest semisimple submodule of $M$.

Every $A$-module can be written as a direct sum of indecomposable $A$-submodules. Let $A$ itself as an $A$-module be a direct sum of indecomposable $A$-modules $\P_1,\ldots,\P_k$. Although $\P_i$ is not simple in general, its top $\C_i$ is. Moreover, every projective indecomposable $A$-module is isomorphic to some $\P_i$, and every simple $A$-module is isomorphic to some $\C_i$. Suppose without loss of generality that $\{\P_1,\ldots,\P_\ell\}$ and $\{\C_1,\ldots,\C_\ell\}$ are complete lists of non-isomorphic projective indecomposable $A$-modules and simple $A$-modules, respectively, where $\ell\le k$. Then the \emph{Cartan matrix} of $A$ is $[a_{ij}]_{i,j\in[\ell]}$ where $a_{ij}$ is the multiplicity of $\C_j$ among the composition factors of $\P_i$.
%The map $\P\mapsto {\rm top}(\P)$ induces a bijection between the isomorphism classes of projective indecomposable $A$-modules and the isomorphism classes of simple $A$-modules. .% Then $\ell\leq k$ and %for each $i\in[\ell]$, the multiplicity of $\P_i$ among $\P_1,\ldots,\P_k$ is $\dim \C_i$, that is,
%\[ A \cong \P_1^{\,\oplus \dim \C_1}\oplus\cdots\oplus \P_\ell^{\,\oplus \dim \C_\ell}.\]

The \emph{Grothendieck group $G_0(A)$ of the category of finitely generated $A$-modules} is defined as the abelian group $F/R$, where $F$ is the free abelian group on the isomorphism classes $[M]$ of finitely generated $A$-modules $M$, and $R$ is the subgroup of $F$ generated by the elements $[M]-[L]-[N]$ corresponding to all exact sequences $0\to L\to M\to N\to0$ of finitely generated $A$-modules. The \emph{Grothendieck group $K_0(A)$ of the category of finitely generated projective $A$-modules} is defined similarly. We often identify a finitely generated (projective) $A$-module with the corresponding element in the Grothendieck group $G_0(A)$ ($K_0(A)$). It turns out that $G_0(A)$ and $K_0(A)$ are free abelian groups with bases $\{\C_1,\ldots,\C_\ell\}$ and $\{\P_1,\ldots,\P_\ell\}$, respectively. If $L,M,N$ are all projective $A$-modules, then the exact sequence $0\to L\to M\to N\to0$ is equivalent to the direct sum decomposition $M\cong L\oplus N$. If $A$ is semisimple then $G_0(A)=K_0(A)$ since $\P_i=\C_i$ for all $i$.

Let $B$ be a subalgebra of $A$. The \emph{induction} $N\uparrow\,_B^A$ of a $B$-module $N$ from $B$ to $A$ is the $A$-module $A\otimes_B N$. The \emph{restriction} $M\downarrow\,_B^A$ of an $A$-module $M$ from $A$ to $B$ is $M$ itself viewed as a $B$-module. The induction and restriction are well defined for isomorphic classes of modules.
%The restriction is clearly well defined for isomorphism classes of modules; so is the induction, as the tensor functor is right exact. 

%The following result is well known.
%\vskip3pt\noindent\textbf{Frobenius Reciprocity.}\quad ${\rm Hom}_A(N\uparrow\,_B^A,M)={\rm Hom}_B(N,M\downarrow\,_B^A)$.

%Denote by $\mu:A\otimes A\to A$ the product of the algebra $A$. If there is a vector space decomposition $A=\bigoplus_{n\geq0} A_n$ satisfying $\mu:A_m\otimes A_n\to A_{m+n}$ for all $m,n\geq0$ then $A$ is \emph{graded}. Given such a graded algebra $A$, an $A$-module $M$ with $A$-action $\gamma: A\otimes M\to M$ is \emph{graded} if it can be decomposed as $M=\bigoplus_{n\geq0}M_n$ satisfying $\gamma:A_m\otimes M_n\to M_{m+n}$ for all $m,n\geq0$.

\subsection{Representation theory of symmetric groups and 0-Hecke algebras}\label{sec:rep}

The (complex) representation theory of the symmetric group is fascinating and has rich connections with symmetric function theory. The simple $\CC\SS_n$-modules $S_\lambda$ are indexed by partitions $\lambda$ of $n$, and every $\CC\SS_n$-module is a direct sum of simple $\CC\SS_n$-modules, i.e. $\CC\SS_n$ is semisimple. Thus the Grothendieck group $G_0(\CC\SS_n)=K_0(\CC\SS_n)$ is a free abelian group on the isomorphism classes $[S_\lambda]$ for all partitions $\lambda$ of $n$. The tower of groups $\SS_\bullet: \SS_0\hookrightarrow \SS_1\hookrightarrow \SS_2\hookrightarrow \cdots$ has a Grothendieck group 	
\[
G_0(\CC\SS_\bullet) := \bigoplus_{n\geq0} G_0(\CC\SS_n).
\]
Using the natural embedding $\SS_m\times\SS_n\hookrightarrow \SS_{m+n}$, one can define the product of $S_\mu$ and $S_\nu$ as the induction of $S_\mu\otimes S_\nu$ from $\SS_m\times \SS_n$ to $\SS_{m+n}$ for all partitions $\mu\vdash m$ and $\nu\vdash n$, and define the coproduct of $S_\lambda$ as the sum of its restriction to $\SS_i\times\SS_{n-i}$ for $i=0,1,\ldots,n$, for all partitions $\lambda\vdash n$. This gives $G_0(\CC\SS_\bullet)$ a self-dual graded Hopf algebra structure, as the product and coproduct share the same structure constants, namely the \emph{Littlewood-Richardson coefficients}.
%S_\mu \htimes S_\nu := (S_\mu\otimes S_\nu) \uparrow\,_{\SS_m\times\SS_n}^{\SS_{m+n}} = \sum_{\lambda} c_{\mu\nu}^\lambda\, S_\lambda,
%\Delta (S_\lambda) := \sum_{i=0}^{m+n} S_\lambda \downarrow\,_{\SS_i\times\SS_{m+n-i}}^{\SS_{m+n}} = \sum_{|\mu|+|\nu|=m+n} c_{\mu\nu}^\lambda\,S_\mu\otimes S_\nu.

The \emph{Frobenius characteristic map} ch sends a simple $S_\lambda$ to the Schur function $s_\lambda$, giving a Hopf algebra isomorphism between the Grothendieck group $G_0(\CC\SS_\bullet)$ and $\Sym$, the \emph{ring of symmetric functions} (see Stanley~\cite[Chapter 7]{EC2}). 

The 0-Hecke algebra $\H_n(0)$ has analogous representation theory as the symmetric group $\SS_n$.  We first review some notation. A \emph{composition} is a sequence $\alpha=(\alpha_1,\ldots,\alpha_\ell)$ of positive integers. Let $\sigma_i:=\alpha_1+\cdots+\alpha_i$ for $i=1,\ldots,\ell$. The \emph{size} $|\alpha|$ of the composition $\alpha$ is the sum of all its \emph{parts} $\alpha_1,\ldots,\alpha_\ell$, i.e. $|\alpha|=\sigma_\ell$. If $|\alpha|=n$ then we say that $\alpha$ is a composition of $n$ and write $\alpha\models n$. The \emph{descent set} of $\alpha$ is $D(\alpha):=\{\sigma_1,\ldots,\sigma_{\ell-1}\}$. Sending $\alpha$ to $D(\alpha)$ gives a bijection between compositions of $n$ and subsets of $[n-1]$.  

Now recall from Norton~\cite{Norton} that the 0-Hecke algebra $\H_n(0)$ has the following decomposition
\[
\H_n(0)=\bigoplus_{\alpha\models n} \P_\alpha(0)
\]
where the $\P_\alpha(0)$'s are pairwise non-isomorphic indecomposable $\H_n(0)$-modules. The top of $\P_\alpha(0)$ is one-dimensional and denoted by $\C_\alpha(0)$. Thus the two Grothendieck groups $G_0(\H_n(0))$ and $K_0(\H_n(0))$ are free abelian groups on the isomorphism classes of $\C_\alpha(0)$ and $\P_\alpha(0)$, respectively, for all compositions $\alpha$. Associated with the tower of algebras $\H_\bullet(0): \H_0(0)\hookrightarrow \H_1(0)\hookrightarrow \H_2(0)\hookrightarrow \cdots$ are two Grothendieck groups
\[
G_0(\H_\bullet(0)):=\bigoplus_{n\geq0}G_0(\H_n(0)) \quad\text{and}\quad
K_0(\H_\bullet(0)):=\bigoplus_{n\geq0}K_0(\H_n(0)).
\]
They are dual graded Hopf algebras with product and coproduct again given by induction and restriction of representations along the natural embeddings $\H_m(0)\otimes \H_n(0)\hookrightarrow \H_{m+n}(0)$ of algebras. The duality is given by the pairing 
$\langle \P_\alpha(0),\C_\beta(0) \rangle := %\dim_\FF {\rm Hom}_{\H_n(0)}(\P_\alpha(0),\C_\beta(0)) =
\delta_{\alpha,\,\beta}$
for all compositions $\alpha$ and $\beta$.

%For a complete description of these two Hopf algebras and the duality between them, see e.g. \cite{H?}. For our purpose we only need to review the explicit formulas here for the product of $K_0(\H_\bullet(0))$ and the coproduct of $G_0(\H_\bullet(0))$. The duality between the algebra structure $K_0(\H_\bullet(0))$ and the coalgebra structure of $G_0(\H_\bullet(0))$ follows immediately from this formulas (or from Frobenius reciprocity, as for the tower of symmetric groups).

For later use we review the explicit formulas for the product of $K_0(\H_\bullet(0))$ and the coproduct of $G_0(\H_\bullet(0))$. Let $\alpha=(\alpha_1,\ldots,\alpha_\ell)$ and $\beta=(\beta_1,\ldots,\beta_k)$ be compositions of $m$ and $n$, respectively. We write 
\[
\alpha\beta:=(\alpha_1,\ldots,\alpha_\ell,\beta_1,\ldots,\beta_k) \quad\text{and}\quad
\alpha\rhd\beta := (\alpha_1,\ldots,\alpha_{\ell-1},\alpha_\ell+\beta_1,\beta_2,\ldots,\beta_k).
\]
For any $i\in\{0,1,\ldots,m\}$, let $r$ be the largest integer such that $\sigma_r:=\alpha_1+\cdots+\alpha_r$ is no more than $i$, and write
\[
\alpha_{\leq i}:=(\alpha_1,\ldots,\alpha_r,i-\sigma_r) \quad\text{and}\quad
\alpha_{>i}:=(\sigma_{r+1}-i,\alpha_{r+2},\ldots,\alpha_\ell)
\]
where we ignore $i-\sigma_r$ if it happens to be $0$. 

\begin{proposition}[Krob and Thibon~\cite{KrobThibon}]\label{prop:ProdP}
For any $\alpha\models m$ and $\beta\models n$ one has
\[
\P_\alpha(0) \htimes \P_\beta(0) := \left( \P_\alpha(0) \otimes \P_\beta(0) \right) \uparrow\,_{\H_m(0)\otimes \H_n(0)}^{\H_{m+n}(0)} = 
\P_{\alpha\beta}(0) \oplus \P_{\alpha \rhd \beta}(0),
\]
\[
\Delta(\C_\alpha(0)) := \sum_{i=0}^m \C_\alpha(0) \downarrow\,_{\H_i(0)\otimes \H_{m-i}(0)}^{\H_m(0)} = \sum_{i=0}^m \C_{\alpha_{\leq i}}(0)  \otimes \C_{\alpha_{>i}}(0).
\]
\end{proposition}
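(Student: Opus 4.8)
The plan is to prove the two formulas together by exploiting the adjointness of induction and restriction, so that the product on $K_0(\H_\bullet(0))$ and the coproduct on $G_0(\H_\bullet(0))$ share their structure constants relative to the pairing $\langle \P_\alpha(0), \C_\beta(0)\rangle = \delta_{\alpha,\beta}$. Concretely, I would first establish the coproduct formula by a direct computation, since restriction of a one-dimensional module is immediate, and then deduce the product formula from it by duality.

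First I would recall the explicit simple modules: $\C_\alpha(0)$ is the one-dimensional $\H_m(0)$-module on which each generator $T_j$ acts by the scalar $0$ if $j\in D(\alpha)$ and by $1$ otherwise (in dimension one the braid relations impose nothing further). To compute $\Delta(\C_\alpha(0))$, note that the subalgebra $\H_i(0)\otimes \H_{m-i}(0)\hookrightarrow \H_m(0)$ is generated by $T_1,\ldots,T_{i-1}$ and $T_{i+1},\ldots,T_{m-1}$, omitting $T_i$. Restricting the one-dimensional $\C_\alpha(0)$ therefore again yields a one-dimensional, hence simple, module on each factor, so every summand of $\Delta(\C_\alpha(0))$ is a single $\C_\mu(0)\otimes \C_\nu(0)$. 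It then remains to identify $\mu$ and $\nu$ by tracking descent sets: the generators $T_1,\ldots,T_{i-1}$ see the scalars indexed by $D(\alpha)\cap[i-1]$, and the generators $T_{i+1},\ldots,T_{m-1}$ see those indexed by $(D(\alpha)\cap\{i+1,\ldots,m-1\})-i$. A short bookkeeping check against the definitions shows $D(\alpha_{\leq i})=D(\alpha)\cap[i-1]$ and $D(\alpha_{>i})=(D(\alpha)\cap\{i+1,\ldots,m-1\})-i$, giving exactly $\mu=\alpha_{\leq i}$ and $\nu=\alpha_{>i}$, which establishes the second formula.

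For the product formula I would invoke Frobenius reciprocity: since induction $A\otimes_B(-)$ is left adjoint to restriction, $\dim \mathrm{Hom}_{\H_{m+n}(0)}(\P_\alpha(0)\htimes \P_\beta(0),\C_\gamma(0)) = \dim \mathrm{Hom}_{\H_m(0)\otimes \H_n(0)}(\P_\alpha(0)\otimes \P_\beta(0),\C_\gamma(0)\downarrow)$. The left side is the multiplicity of $\P_\gamma(0)$ in $\P_\alpha(0)\htimes \P_\beta(0)$, because $\dim\mathrm{Hom}(\P,\C_\gamma)$ counts $\C_\gamma$ in the top of $\P$ and thus counts $\P_\gamma(0)$ in any projective. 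The right side, using the coproduct formula just proved together with $\langle \P_\alpha(0)\otimes \P_\beta(0),\C_\mu(0)\otimes \C_\nu(0)\rangle=\delta_{\alpha,\mu}\delta_{\beta,\nu}$, counts the indices $i$ with $\gamma_{\leq i}=\alpha$ and $\gamma_{>i}=\beta$; only $i=m$ can contribute. Checking $\alpha\beta$ and $\alpha\rhd\beta$ against the definitions shows these are precisely the two compositions $\gamma$ for which this holds, each occurring once and no others, and they are always distinct since they have $\ell+k$ and $\ell+k-1$ parts respectively.

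Finally I would upgrade this equality of classes in $K_0(\H_\bullet(0))$ to the stated module isomorphism. Induction preserves projectivity (if $\P$ is a summand of a free $B$-module then $A\otimes_B\P$ is a summand of a free $A$-module), so $\P_\alpha(0)\htimes \P_\beta(0)$ is projective, and a projective module is determined up to isomorphism by its class in $K_0$ via Krull--Schmidt; hence the computed multiplicities force $\P_\alpha(0)\htimes \P_\beta(0)\cong \P_{\alpha\beta}(0)\oplus \P_{\alpha\rhd\beta}(0)$. The point requiring the most care is not any single computation but setting up the adjunction correctly: one must confirm that the stated pairing $\langle \P_\alpha(0),\C_\beta(0)\rangle=\delta_{\alpha,\beta}$ is indeed the natural Hom-pairing between $K_0$ and $G_0$, so that Frobenius reciprocity genuinely transports the coproduct structure constants over $\FF$ to the product structure constants.
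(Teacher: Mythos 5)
Your proof is correct, but note that the paper itself offers no proof of this proposition: it is quoted with a citation to Krob and Thibon, who obtain it inside their $\NSym$/$\QSym$ framework, where the product formula is the classical multiplication rule for ribbon Schur functions ($\bs_\alpha\bs_\beta=\bs_{\alpha\beta}+\bs_{\alpha\rhd\beta}$) transported through their characteristic maps $\mathbf{ch}$ and $\mathrm{Ch}$. Your argument is a self-contained representation-theoretic alternative, and each step checks out: the restriction of the one-dimensional $\C_\alpha(0)$ to $\H_i(0)\otimes\H_{m-i}(0)$ is literally the simple $\C_{\alpha_{\leq i}}(0)\otimes\C_{\alpha_{>i}}(0)$ (your descent bookkeeping $D(\alpha_{\leq i})=D(\alpha)\cap[i-1]$ and $D(\alpha_{>i})=(D(\alpha)\cap\{i+1,\ldots,m-1\})-i$ is exactly right, and is insensitive to the $0/1$ convention for how $T_j$ acts); Frobenius reciprocity applies since induction is left adjoint to restriction; $\dim\mathrm{Hom}(\P,\C_\gamma(0))$ does compute multiplicities of indecomposable projective summands because $\H_{m+n}(0)$ is split over any field (all simples one-dimensional); the only $\gamma\models m+n$ with $\gamma_{\leq m}=\alpha$ and $\gamma_{>m}=\beta$ are $\alpha\beta$ (when $m\in D(\gamma)$) and $\alpha\rhd\beta$ (when $m\notin D(\gamma)$); and the Krull--Schmidt upgrade from a $K_0$-identity to a module isomorphism is valid since induction preserves projectivity. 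What your route buys is independence from the Hopf-algebraic machinery and a transparent explanation of why the product and coproduct share structure constants --- which is precisely the duality the paper later exploits for $G_0(\H_\bullet)$ in Corollary~\ref{cor:duality}. One pedantic caveat: when $m=0$ or $n=0$ the composition $\alpha\rhd\beta$ is not defined (and concatenation alone gives the answer), so the displayed formula should be read for nonempty $\alpha,\beta$; your parts-count argument that $\alpha\beta\neq\alpha\rhd\beta$ is correct exactly in that regime.
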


For example, one has $\P_{213}(0)\htimes\P_{223}(0) = \P_{213223}(0) \oplus \P_{21523}(0)$. Let $\emptyset$ be the empty composition of $n=0$. Then
\[
\Delta(\C_{121}(0)) = \C_\emptyset(0)\otimes\C_{121}(0) + \C_1(0)\otimes\C_{21}(0) + \C_{11}(0)\otimes\C_{11}(0) + \C_{12}(0)\otimes\C_{1}(0) + \C_{121}(0) \otimes \C_\emptyset(0).
\]

The representation theory of the 0-Hecke algebras is connected with the dual graded Hopf algebras $\QSym$ of \emph{quasisymmetric functions} and $\NSym$ of \emph{noncommutative symmetric functions}. There are dual bases for $\QSym$ and $\NSym$ consisting of the \emph{fundamental quasisymmetric functions} $F_\alpha$ and the \emph{noncommutative ribbon Schur functions} $\bs_\alpha$ for all compositions $\alpha$. Krob and Thibon~\cite{KrobThibon} introduced two Hopf algebra isomorphisms
\[
\mathrm{Ch}: G_0(\H_\bullet(0))\cong\QSym \quad\text{and}\quad 
\mathbf{ch}: K_0(\H_\bullet(0))\cong \NSym
\]
defined by $\mathrm{Ch}(\C_\alpha(0))=F_\alpha$ and $\mathbf{ch}(\P_\alpha(0))=\bs_\alpha$ for all compositions $\alpha$. There is an injection $\Sym\hookrightarrow \QSym$ of Hopf algebras given by inclusion, as well as a surjection $\NSym\twoheadrightarrow\Sym$ of Hopf algebras by taking commutative image.

\section{Collapse and commutativity}\label{sec:collapse}

Let $(W,S)$ be a Coxeter system and let $\FF$ be a field. In this section we study when the Hecke algebra $\H(\bq)=\H_S(\bq)$ of $(W,S)$ with independent parameters $\bq=(q_s\in \FF:s\in S)$ collapses or becomes commutative. 

We first study the \emph{parabolic subalgebras} of $\H(\bq)$. We know that any subset $R\subseteq S$ generates a Coxeter subsystem $(W_R,R)$ of $(W,S)$. However, the subalgebra of $\H(\bq)$ generated by $\{T_r:r\in R\}$ is not necessarily isomorphic to the Hecke algebra $\H_R(\bq)$ of the Coxeter system $(W_R,R)$ with independent parameters $(q_r:r\in R)$. For example, if there exist two elements $s$ and $t$ in $S$ such that $q_s$ and $q_t$ are distinct nonzero parameters and $m_{st}$ is odd, then the algebra $\H_{\{s\}} (\bq)$ is $2$-dimensional, but Theorem~\ref{thm:12} below gives $T_s=1$ in $\H(\bq)$. To guarantee an isomorphism between these two algebras we assume that $R\subseteq S$ is \emph{admissible}, i.e. if $m_{st}$ is odd for $s\in R$ and $t\in S\setminus R$ then either $q_s=0$ or $q_t=0$. If $R$ is admissible then one sees that $S\setminus R$ is also admissible. We denote the generating set of $\H_R(\bq)$ by $\{T'_r:r\in R\}$, which satisfies the relations $(T'_r-1)(T'_r+q)=0$ and $(T'_rT'_tT'_r\cdots)_{m_{rt}} = (T'_tT'_rT'_t\cdots)_{m_{rt}}$ for all $r,t\in R.$

\begin{proposition}\label{prop:subalgebra}
For any $R\subseteq S$ there is an algebra surjection from $\H_R(\bq)$ to the subalgebra of $\H(\bq)$ generated by $\{T_r:r\in R\}$ by sending $T'_r$ to $T_r$ for all $r\in R$, which is an isomorphism when $R$ is admissible.
\end{proposition}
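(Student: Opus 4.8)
The plan is to split the statement into the easy surjectivity and the substantive injectivity, and to handle the latter by a faithful-representation argument.

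\textbf{Surjectivity.} First I would produce the homomorphism from the universal property of the presentation of $\H_R(\bq)$. For this it suffices to check that the elements $\{T_r:r\in R\}$ of $\H(\bq)$ satisfy the defining relations of $\H_R(\bq)$: the quadratic relation $(T_r-1)(T_r+q_r)=0$ holds in $\H(\bq)$ for every generator, and for $r,t\in R$ the braid relation $(T_rT_tT_r\cdots)_{m_{rt}}=(T_tT_rT_t\cdots)_{m_{rt}}$ is one of the braid relations of $\H(\bq)$. Hence $T'_r\mapsto T_r$ extends to an algebra homomorphism $\phi\colon\H_R(\bq)\to\H(\bq)$ whose image is precisely the subalgebra generated by $\{T_r:r\in R\}$, so $\phi$ is onto that subalgebra. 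This step needs no hypothesis on $R$.

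\textbf{Injectivity, strategy.} For admissible $R$ the real content is that $\phi$ is injective. My plan is to build a representation of $\H(\bq)$ on the underlying vector space of $\H_R(\bq)$ whose composite with $\phi$ is the left regular representation of $\H_R(\bq)$; since the regular representation of a unital algebra is faithful, injectivity of $\phi$ follows. Concretely I would define $\rho\colon\H(\bq)\to\mathrm{End}_\FF\big(\H_R(\bq)\big)$ on generators by $\rho(T_r)=L_{T'_r}$ (left multiplication by $T'_r$) for $r\in R$, and $\rho(T_s)=c_s\,\mathrm{Id}$ for $s\in S\setminus R$, where $c_s\in\FF$ is a root of $x^2=(1-q_s)x+q_s$ chosen below. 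Once $\rho$ is shown to respect all relations of $\H(\bq)$, the composite $\rho\circ\phi$ is an algebra homomorphism sending each $T'_r$ to $L_{T'_r}$, hence equals the left regular representation $a\mapsto L_a$ of $\H_R(\bq)$; as $L_a(1)=a$, this composite is injective, and therefore so is $\phi$. Note this argument never uses the dimension of $\H_R(\bq)$, so it is unaffected by any internal collapse of $\H_R(\bq)$.

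\textbf{The key step, and where admissibility enters.} The crux, and the step I expect to be the main obstacle, is verifying that $\rho$ preserves every relation, in particular the braid relations mixing a generator $T_r$ with $r\in R$ and a scalar $T_s$ with $s\in S\setminus R$. I would take $c_s=0$ when $q_s=0$ and $c_s=1$ when $q_s\ne0$, each a genuine root of the relevant quadratic. Because every $\rho(T_s)$ is a scalar commuting with $L_{T'_r}$, a mixed braid with \emph{even} $m_{rs}$ collapses to the trivial identity $c_s^{m_{rs}/2}L_{T'_r}^{m_{rs}/2}=c_s^{m_{rs}/2}L_{T'_r}^{m_{rs}/2}$, and braids between two scalars $\rho(T_s),\rho(T_{s'})$ hold automatically for our choice $c_s,c_{s'}\in\{0,1\}$. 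For \emph{odd} $m_{rs}$ the two sides become $c_s^{(m_{rs}+1)/2}L_{T'_r}^{(m_{rs}-1)/2}$ and $c_s^{(m_{rs}-1)/2}L_{T'_r}^{(m_{rs}+1)/2}$, and this is exactly where admissibility of $R$ is used: it forces $q_s=0$ or $q_r=0$. If $q_s=0$ then $c_s=0$ and both sides vanish, since $m_{rs}\ge3$ makes the exponents on $c_s$ positive; if $q_r=0$ then $L_{T'_r}$ is idempotent, so $L_{T'_r}^k=L_{T'_r}$ for $k\ge1$, and with $c_s=1$ both sides equal $L_{T'_r}$. The quadratic relations hold from the choice of $c_s$ and from $T'_r$ satisfying its own quadratic, and the braid relations internal to $R$ are inherited from $\H_R(\bq)$. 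This would complete the verification that $\rho$ is a homomorphism, and with it the proof.
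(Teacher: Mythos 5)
Your proof is correct and is essentially the paper's proof in a light disguise: the paper defines an algebra map $\psi\colon\H(\bq)\to\H_R(\bq)$ with $\psi(T_s)=T'_s$ for $s\in R$ and $\psi(T_s)=1$ or $0$ according as $q_s\ne0$ or $q_s=0$, and your $\rho$ is exactly $\psi$ composed with the (faithful) left regular representation of $\H_R(\bq)$, with the same case analysis in which admissibility handles the odd mixed braids (idempotence of $T'_r$ when $q_r=0$, vanishing when $q_s=0$). The only cosmetic difference is that the paper concludes injectivity from $\psi\circ\phi=\mathrm{id}$ directly rather than via faithfulness of the regular representation.
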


\begin{proof}
Sending $T'_r$ to $T_r$ for all $r\in R$ gives an algebra map  $\phi:H_R(\bq)\to \H(\bq)$ whose image is the subalgebra of $\H(\bq)$ generated by $\{T_r:r\in R\}$. Suppose that $R$ is admissible and define 
\[
\psi(T_s)=
\begin{cases}
T'_s, & {\rm if}\  s\in R,\\
1, & {\rm if}\ s\in S\setminus R,\ q_s\ne0,\\
0, & {\rm if}\ s\in S\setminus R,\ q_s=0.
\end{cases}
\]
One sees that the quadratic relations are preserved by $\psi$. We next check the braid relations. Let $s,t\in S$ with $m_{st}=m$.

If $s$ and $t$ are both in $R$ then $\psi(T_s)=T'_s$ and $\psi(T_t)=T'_t$ satisfy the same braid relation as $T_s$ and $T_t$.

If $s\in R$ and $t\in S\setminus R$, then $\psi(T_t)\in\{0,1\}$. When $m$ is even one has
\[
(\psi(T_s)\psi(T_t)\psi(T_s)\cdots)_m=(\psi(T_t)\psi(T_s)\psi(T_t)\cdots)_m.
\]
When $m$ is odd and $q_t=0$ one has $\psi(T_t)=0$ and the above quality still holds. When $m$ is odd and $q_t\ne 0$ one has $\psi(T_t)=1$ and the admissibility of $R$ implies $q_s=0$. Thus
\[
(\psi(T_s)\psi(T_t)\psi(T_s)\cdots)_m=(T'_s)^{(m+1)/2}=(T'_s)^{(m-1)/2}=(\psi(T_t)\psi(T_s)\psi(T_t)\cdots)_m.
\]

It follows that $\psi$ is a well defined algebra map. Restricted to the image of $\phi$, the map $\psi$ is nothing but the inverse of $\phi$. Thus the result holds.
\end{proof}

We say that a path in the Coxeter diagram of $(W,S)$ is \emph{odd} if all its edges have odd weights, and \emph{nonzero} if all its vertices, including the two end vertices, correspond to nonzero parameters. The \emph{collapsed subset} of $S$ consists of all elements $r\in S$ that are connected to some other vertex $s$ (depend on $r$) with $q_s\ne q_r$ via an odd nonzero path. %One sees that the collapsed subset $R\subseteq S$ is admissible.

\begin{theorem}\label{thm:12}
If $R$ is the collapsed subset of $S$ then (i) $T_r=1$, $\forall r\in R$, (ii) $T_s\notin\FF$, $\forall s\in S\setminus R$, and (iii) $\H(\bq)\cong \H_{S\setminus R}(\bq)$.
\end{theorem}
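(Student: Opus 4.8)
The plan is to establish part (i) first, since parts (ii) and (iii) follow from it relatively cleanly. For part (i), I would start from the crucial local fact hidden in the statement: if $s,t \in S$ with $m_{st}$ odd and $q_s \neq q_t$, then at least one of $T_s, T_t$ equals $1$ in $\H(\bq)$. To see this, consider the braid relation for such a pair. When $m_{st}$ is odd, say $m_{st} = 2k+1$, the two alternating words $(T_sT_tT_s\cdots)_{m_{st}}$ and $(T_tT_sT_t\cdots)_{m_{st}}$ start and end with \emph{different} generators: one is $T_s\cdots T_s$ and the other is $T_t\cdots T_t$. I would multiply the braid relation on the left and right by appropriate generators (or their quasi-inverses, using $T_s^2 = (1-q_s)T_s + q_s$) to extract a relation forcing $T_s$ and $T_t$ to be comparable. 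The cleanest route is probably to show directly that the braid relation, combined with the two quadratic relations, collapses: since $q_s \neq q_t$, the two quadratic relations $T_s^2 = (1-q_s)T_s + q_s$ and $T_t^2 = (1-q_t)T_t + q_t$ are genuinely different, and I expect that pushing the odd braid relation through these forces $(q_s - q_t)(T_s - 1) = 0$ or a symmetric statement, hence $T_s = 1$ or $T_t = 1$.

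Granting this local collapse, part (i) follows by propagation along odd nonzero paths. If $r \in R$ is connected to some $s$ with $q_s \neq q_r$ via an odd nonzero path $r = r_0, r_1, \ldots, r_m = s$, I would argue inductively along the path. At the far end, the local fact gives $T_{r_{m-1}} = 1$ or $T_{r_m} = 1$. The point is that once \emph{some} $T_{r_i} = 1$ on a nonzero odd edge $\{r_{i-1}, r_i\}$ with $q_{r_{i-1}} \neq 0 \neq q_{r_i}$, I can feed $T_{r_i} = 1$ back into the braid relation for that edge: the odd braid relation with one generator set to $1$ becomes a power relation on the other generator, say $(T_{r_{i-1}})^{k+1} = (T_{r_{i-1}})^{k}$, and since $q_{r_{i-1}} \neq 0$ makes $T_{r_{i-1}}$ invertible, this forces $T_{r_{i-1}} = 1$ as well. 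Iterating backward along the nonzero path yields $T_{r_j} = 1$ for every vertex on it, in particular $T_r = 1$. This is the step I expect to require the most care: making the "one generator is $1$ forces its odd-adjacent partner to be $1$" implication precise, and correctly using invertibility of $T$'s at nonzero parameters, which holds because $T_s^{-1} = q_s^{-1}T_s + 1 - q_s^{-1}$ when $q_s \neq 0$.

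For part (ii), I would argue that no $T_s$ with $s \in S \setminus R$ can be a scalar. If $T_s \in \FF$, the quadratic relation forces $T_s \in \{1, -q_s\}$; I would rule out $T_s = -q_s$ unless it coincides with $1$, and then show that $T_s = 1$ would place $s$ in the collapsed set $R$ (via a suitable neighbor), contradicting $s \notin R$. The honest way to prove $T_s \notin \FF$ is to exhibit a representation of $\H(\bq)$ in which the image of $T_s$ is not scalar; the natural candidate is a quotient or the spanning action from Theorem~\ref{thm:TwBasis}, restricted to the admissible parabolic $\H_{S \setminus R}(\bq)$. Finally, for part (iii), I would invoke Proposition~\ref{prop:subalgebra}: the key observation is that $S \setminus R$ is \emph{admissible}, because any odd edge $\{s,t\}$ with $s \in S\setminus R$, $t \in R$ and $q_s \neq q_t$ would, by the local collapse together with the propagation argument, have forced $s$ into $R$ as well --- so for the surviving generators the only odd edges crossing into $R$ have matching parameters or a zero parameter. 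Admissibility then gives the isomorphism $\H_{S \setminus R}(\bq) \cong \langle T_r : r \in S\setminus R\rangle$, and part (i) shows the generators $\{T_r : r \in R\}$ contribute nothing new (each equals $1$), so $\H(\bq)$ is generated by $\{T_s : s \in S \setminus R\}$ and the isomorphism $\H(\bq) \cong \H_{S\setminus R}(\bq)$ follows.
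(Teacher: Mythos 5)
Your overall skeleton matches the paper's proof (a two-generator collapse on odd edges, propagation along odd nonzero paths via invertibility, then admissibility of $S\setminus R$ plus Proposition~\ref{prop:subalgebra} for (iii)), but there are genuine gaps. First, your ``local fact'' is false as stated: if $m:=m_{st}$ is odd and $q_s=0\ne q_t$, Lemma~\ref{lem:H(0,q)} shows $\H_{\{s,t\}}(\bq)$ has dimension $2m-3$ with $1$, $T_s$, $T_t$ linearly independent, so \emph{neither} generator equals $1$. The braid-plus-quadratic manipulation does not yield $(q_s-q_t)(T_s-1)=0$; it yields $(q_s-q_t)\bigl((T_tT_sT_t\cdots)_{m-1}-(T_tT_sT_t\cdots)_m\bigr)=0$, and to finish one must cancel invertible factors, which requires $q_s\ne0\ne q_t$ --- and then the conclusion is the stronger $T_s=T_t=1$, not an ``or''. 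On an odd nonzero path this hypothesis does hold, but your induction starts in the wrong place: along $r=r_0,r_1,\ldots,r_m=s$ only the \emph{endpoint} parameters are known to differ, and the far-end edge $\{r_{m-1},r_m\}$ may well have $q_{r_{m-1}}=q_{r_m}$ (e.g.\ parameters $1,2,2,\ldots,2$ along the path), so ``the local fact gives $T_{r_{m-1}}=1$ or $T_{r_m}=1$'' is unjustified. The repair is the paper's induction on path length (if $q_{r_0}\ne q_{r_1}$, apply the two-generator collapse; otherwise $q_{r_0}=q_{r_1}$ and the shorter path still witnesses the defining condition), or equivalently: locate \emph{some} adjacent pair with differing parameters, which exists because the endpoints differ, collapse there, and then propagate outward --- your power-relation propagation step itself is sound.

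The weakest point is (ii). You correctly sense that one must exhibit a representation in which $T_s$ is non-scalar, but you never construct one, and your proposed candidate is circular: by Theorem~\ref{thm:TwBasis} the set $\{T_w\}$ merely \emph{spans} the collapse-free parabolic $\H_{S\setminus R}(\bq)$ and is not a basis, so restricting to it gives no control --- whether $T_s$ is scalar there is exactly what must be proved. The missing idea is the paper's reduction: if $q_s=0$ then $\{s\}$ is admissible and Proposition~\ref{prop:subalgebra} gives a two-dimensional subalgebra generated by $T_s$; if $q_s\ne0$, take $U$ to be all vertices joined to $s$ by odd nonzero paths; $U$ is admissible, and if $s\notin R$ then all parameters on $U$ coincide, so $\H_U(\bq)$ has the honest basis $\{T_w:w\in W_U\}$ (Lusztig), forcing $T_s\notin\FF$. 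Your alternative sketch --- deducing from $T_s=1$ that $s\in R$ --- inverts the logic, since $R$ is defined by diagram data, not by relations in the algebra. Finally, in (iii) your admissibility check for $S\setminus R$ only rules out crossing odd edges with \emph{distinct} parameters; an edge with $q_s=q_t\ne0$, $s\in S\setminus R$, $t\in R$ would equally violate admissibility, and must be excluded by noting that prepending $\{s,t\}$ to the odd nonzero path witnessing $t\in R$ shows $s\in R$ as well. With these repairs your plan coincides with the paper's argument.
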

 
\begin{proof}
By definition, for any $r\in R$ there exists an odd nonzero path $(r, s, \ldots,t)$ from $r$ to some $t\in S$ such that $q_r\ne q_t$. We show (i) by induction on the length of the path. First assume that the length is $1$, i.e. there is an edge between $r$ and $t$ with an odd weight $m:=m_{rt}$. The braid relation between $T_r$ and $T_t$ implies that
\[
T_r(T_rT_tT_r\cdots T_r)_m = (T_rT_tT_r\cdots T_t)_{m+1} = (T_tT_rT_t\cdots T_t)_m T_t.
\]
Using the quadratic relations for $T_r$ and $T_t$ one obtains
\[
q_r(T_tT_rT_t\cdots)_{m-1}+(1-q_r)(T_rT_tT_r\cdots)_m = q_t(T_tT_rT_t\cdots)_{m-1} + (1-q_t) (T_tT_rT_t\cdots)_m.
\]
Hence 
\[
(q_r-q_t)(T_tT_rT_t\cdots T_r)_{m-1} = (q_r-q_t)(T_rT_tT_r\cdots T_r)_m = (q_r-q_t)(T_tT_rT_t\cdots T_t)_m.
\] 
Since $q_r\ne0$, $q_t\ne0$, and $q_r\ne q_t$, one can apply the inverses of $T_r$, $T_t$, and $(q_r-q_t)$ to get $T_r=T_t=1$.

Now suppose that the path $(r,s,\ldots,t)$ has length at least two. If $q_r\ne q_s$ then $T_r=1$ by the above argument. Otherwise $q_r=q_s\ne q_t$ and one has $T_{s}=1$ by induction, since $(s,\ldots,t)$ is an odd nonzero path of smaller length. Then applying $T_r^{-1}$ to the braid relation between $T_r$ and $T_{s}$ gives $T_r=1$. This proves (i).

To show (ii), we assume $T_s\in\FF$ for some $s\in S$. If $q_s=0$ then $\{s\}$ is admissible and thus the subalgebra of $\H(\bq)$ generated by $T_s$ is $2$-dimensional by Proposition~\ref{prop:subalgebra}, which is absurd. Therefore $q_s\ne0$. Let $U$ be the set of all elements in $S$ that are connected to $s$ via odd nonzero paths, including $s$ itself. Then $q_u\ne0$ for all $u\in U$. One sees that $U$ is admissible and hence the subalgebra of $\H(\bq)$ generated by $\{T_u:u\in U\}$ is isomorphic to the algebra $\H_U(\bq)$ by Proposition~\ref{prop:subalgebra}. If $|\{q_u:u\in U\}|=1$ then $\H_R(\bq)$ has a basis indexed by $W_U$, and hence $T_s\notin\FF$, a contradiction. Therefore $|\{q_u:u\in U\}|\geq 2$. This forces $s\in R$ and establishes (ii). 

Finally, one sees that $S\setminus R$ is admissible. By Proposition~\ref{prop:subalgebra}, $\H_{S\setminus R}(\bq)$ is isomorphic to the subalgebra of $\H(\bq)$ generated by $\{T_s:s\in S\setminus R\}$. Hence (iii) follows from (i).
\end{proof}

By Theorem~\ref{thm:12}, we may always assume without loss of generality that $\H(\bq)$ is \emph{collapse-free}, i.e. if $m_{st}$ is odd and $q_s\ne q_t$ then either $q_s$ or $q_t$ is $0$. We next develop some lemmas in order to characterize when $\H(\bq)$ is commutative.

\begin{lemma}\label{lem:H(0,q)}
If $S=\{s,t\}$, $q_s=0\ne q_t$, and $m:=m_{st}$ is odd, then $\H(\bq)$ has dimension $2m-3$ and a basis 
\[
\{(T_sT_tT_s\cdots)_k,\ (T_tT_sT_t\cdots)_k: k=0,1,2,\ldots,m-2\}.
\]
\end{lemma}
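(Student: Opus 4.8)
The plan is to identify the claimed spanning set explicitly, prove it spans by collapsing the three top-length elements, and prove it is linearly independent by exhibiting a concrete $(2m-3)$-dimensional module on which the corresponding products act as independent vectors.

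Set $q:=q_t\neq0$, so the quadratic relations read $T_s^2=T_s$ (an idempotent, since $q_s=0$) and $T_t^2=(1-q)T_t+q$ (with $T_t$ invertible). Write $u_k:=(T_sT_tT_s\cdots)_k$ and $v_k:=(T_tT_sT_t\cdots)_k$, so $u_0=v_0=1$ and, by the braid relation, $u_m=v_m$. By Theorem~\ref{thm:TwBasis} the $2m$ elements $\{u_k:0\le k\le m\}\cup\{v_k:1\le k\le m\}$, subject to the identifications $u_0=v_0$ and $u_m=v_m$, already span $\H(\bq)$; the goal is to show that the $2m-3$ of them with $k\le m-2$ form a basis.

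\emph{Spanning.} Since $m$ is odd, $u_{m-1}$ ends in $T_t$ while $v_{m-1}$ ends in $T_s$, and comparing the two ways of writing $u_m=v_m$ (as $u_{m-1}T_s=T_tu_{m-1}$ from appending/recurrence, and as $T_sv_{m-1}=v_{m-1}T_t$) gives two ``commutation'' identities. I would multiply the first on the left by $T_t$ and the second on the right by $T_t$, then substitute $T_t^2=(1-q)T_t+q$ and $T_s^2=T_s$; since $q\neq0$ this forces $u_m=u_{m-1}$ and $u_m=v_{m-1}$. Finally, setting $w:=u_{m-1}=v_{m-1}=u_m$ and computing $wT_t$ in two ways—once as $v_{m-1}T_t=v_m=w$, and once as $u_{m-2}T_t^2=(1-q)w+q\,u_{m-2}$—yields $w=u_{m-2}$. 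Thus $u_{m-1}$, $v_{m-1}$, and $u_m$ all collapse onto $u_{m-2}$, so $\{u_k,v_k:0\le k\le m-2\}$ spans and $\dim\H(\bq)\le 2m-3$.

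\emph{Independence.} I would build a left $\H(\bq)$-module $V$ with formal basis $\{u_k:0\le k\le m-2\}\cup\{v_k:1\le k\le m-2\}$ (with $u_0=v_0$), defining $L_s,L_t\in\mathrm{End}(V)$ by the multiplication rules implicit above: $L_s$ fixes each $u_k$ ($k\ge1$), sends $u_0\mapsto u_1$, and sends $v_k\mapsto u_{k+1}$ with the cap $v_{m-2}\mapsto u_{m-2}$; while $L_t$ sends $u_k\mapsto v_{k+1}$ with the cap $u_{m-2}\mapsto u_{m-2}$, and $v_k\mapsto(1-q)v_k+q\,u_{k-1}$. The quadratic relations $L_s^2=L_s$ and $L_t^2=(1-q)L_t+q$ are routine to check on basis vectors. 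The crux is the braid relation, and the clean observation is that both length-$m$ products $(L_sL_tL_s\cdots)_m$ and $(L_tL_sL_t\cdots)_m$ equal the \emph{same} rank-one operator sending every basis vector to $u_{m-2}$: repeated application raises the index until it saturates at the cap $m-2$, where both $L_s$ and $L_t$ fix $u_{m-2}$, and the two-term expansions produced by $L_t(v_k)$ also saturate to $u_{m-2}$ with coefficients summing to $1$. Granting this, $V$ is a genuine $\H(\bq)$-module, and since $(T_sT_t\cdots)_k\cdot u_0=u_k$ and $(T_tT_s\cdots)_k\cdot u_0=v_k$ for $0\le k\le m-2$, applying the spanning elements to $u_0$ recovers exactly the $2m-3$ distinct basis vectors of $V$. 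Hence the spanning elements are linearly independent, so they form a basis and $\dim\H(\bq)=2m-3$.

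\emph{Main obstacle.} The one genuinely delicate point is the braid identity on $V$; everything else is bookkeeping. I expect the saturation argument to be the efficient route, but it requires tracking the two-term $q$-expansions from $L_t(v_k)=(1-q)v_k+q\,u_{k-1}$ and confirming that after $m$ applications they coalesce onto $u_{m-2}$ with total coefficient $1$, independently of the starting basis vector; a short induction on the number of operators applied should handle this uniformly, the small case $m=3$ serving as a sanity check.
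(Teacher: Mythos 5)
Your proposal is correct and takes essentially the same route as the paper: your collapse of $u_{m-1}$, $v_{m-1}$, $u_m$ onto $u_{m-2}$ is the paper's spanning computation up to a minor rearrangement (both rest on the braid relation, the quadratic relations, and the invertibility of $T_t$), and your module $V$ with the operators $L_s,L_t$ — including the caps at index $m-2$ and the observation that both length-$m$ products act as the rank-one map onto $u_{m-2}$ — is exactly the paper's cyclic $(2m-3)$-dimensional module $\FF Z$ used to prove linear independence.
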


\begin{proof}
Since $q_s=0\ne q_t$ and $m$ is odd, it follows from the defining relations for $\H(\bq)$ that
\[
(T_sT_tT_s\cdots T_s)_m=(T_sT_tT_s\cdots T_t)_{m+1}=(T_tT_sT_t\cdots T_t)_mT_t=q_t(T_tT_sT_t\cdots)_{m-1}+(1-q_t)(T_tT_sT_t\cdots)_m
\]
which implies $(T_tT_sT_t\cdots)_{m-1}=(T_tT_sT_t\cdots)_m$ and thus $(T_sT_tT_s\cdots)_{m-2}=(T_sT_tT_s\cdots)_{m-1}$. Similarly, 
\[
(T_sT_tT_s\cdots)_m = (T_tT_sT_t\cdots T_s)_{m+1}= T_t(T_tT_sT_t\cdots)_m=q_t (T_sT_tT_s\cdots)_{m-1}+(1-q_t)(T_tT_sT_t\cdots)_m.
\]
Thus $(T_sT_tT_s\cdots T_t)_{m-1}=(T_tT_sT_t\cdots T_t)_m$ and $(T_sT_tT_s\cdots)_{m-2}=(T_tT_sT_t\cdots)_{m-1}$. It follows that $\H(\bq)$ is spanned by the desired basis. Then it remains to show that the dimension of $\H(\bq)$ is at least $2m-3$.

To achieve this, we define an $\H(\bq)$-action on the $\FF$-span of 
$
Z:=\{(sts\cdots)_k,\ (tst\cdots)_k: k=0,1,2,\ldots,m-2\}
$
where $(sts\cdots)_0=(tst\cdots)_0=1$ by convention. The dimension of $\FF Z$ is by definition $|Z|=2m-3$. Define
\[
\begin{cases}
T_s(tst\cdots)_k = (sts\cdots)_{k+1}, & 0\leq k\leq m-3, \\
T_t(sts\cdots)_k=(tst\cdots)_{k+1}, & 0\leq k\leq m-3, \\
T_s(sts\cdots)_k=(sts\cdots)_k, & 1\leq k\leq m-2,\\
T_t(tst\cdots)_k = q_t(sts\cdots)_{k-1} + (1-q_t)(tst\cdots)_k, & 1\leq k\leq m-2,\\
T_s(tst\cdots)_{m-2}=T_t(sts\cdots)_{m-2} = (sts\cdots)_{m-2}.
\end{cases}
\]
One sees that the quadratic relations for $T_s$ and $T_t$ are both satisfied by this action, and so is the braid relation because
\[
(T_sT_tT_s\cdots)_m (z) = (T_tT_sT_t\cdots)_m (z) = (sts\cdots)_{m-2}, \quad\forall z\in Z.
\] 
Hence $\FF Z$ becomes a cyclic $\H(\bq)$-module generated by $1$. This forces the dimension of $\H(\bq)$ to be at least $2m-3$.
\end{proof}

\begin{lemma}\label{lem:01=0}
Suppose that there exists a path $(s=s_0,s_1,s_2,\ldots,s_k=t)$ consisting of simply laced edges in the Coxeter diagram of $(W,S)$, where $k\geq1$. If $q_{s_i}\ne0$ and $m_{ss_i}\leq 3$ for all $i\in[k]$, and $q_s=0$, then $T_sT_t=T_tT_s=T_s$.
\end{lemma}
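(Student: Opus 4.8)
The plan is to reduce the whole statement, by induction on the length $k$ of the path, to a single two-generator identity for a simply laced edge. I would first isolate that special case: \emph{if $r,r'\in S$ satisfy $m_{rr'}=3$, $q_r=0$ and $q_{r'}\ne0$, then $T_rT_{r'}=T_{r'}T_r=T_r$.} Since $q_r=0$ forces $T_r^2=T_r$, left-multiplying the braid relation $T_rT_{r'}T_r=T_{r'}T_rT_{r'}$ by $T_r$ and using $T_r^2=T_r$ gives $T_rT_{r'}T_r=T_rT_{r'}T_rT_{r'}$; rewriting both copies of $T_rT_{r'}T_r$ by the braid relation turns this into $T_{r'}T_rT_{r'}=T_{r'}T_rT_{r'}^2$. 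Expanding $T_{r'}^2=(1-q_{r'})T_{r'}+q_{r'}$ and cancelling the factor $q_{r'}\ne0$ yields $T_{r'}T_rT_{r'}=T_{r'}T_r$; left-multiplying this once more by $T_{r'}$ and again expanding $T_{r'}^2$ produces $T_rT_{r'}=T_r$, after which $T_{r'}T_r=T_{r'}T_rT_{r'}=T_rT_{r'}T_r=T_r$ closes the case. The whole point is that the \emph{nonzero} parameter $q_{r'}$ is what allows the cancellations; were $q_{r'}=0$ these manipulations would only return tautologies.

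With this claim in hand I would induct on $k$. The base case $k=1$ is exactly the claim, since the first path edge $\{s,s_1\}$ is simply laced, so $m_{ss_1}=3$, while $q_s=0\ne q_{s_1}$. For the inductive step I apply the induction hypothesis to the subpath $(s_0,\ldots,s_{k-1})$, which inherits all the hypotheses because $m_{ss_i}\le3$ and $q_{s_i}\ne0$ for every $i\in[k-1]$; this gives $T_sT_{s_{k-1}}=T_{s_{k-1}}T_s=T_s$. It then remains to transfer the absorption from $s_{k-1}$ to $s_k=t$, and here I split according to the value $m_{ss_k}\in\{2,3\}$ permitted by the hypothesis $m_{ss_k}\le3$.

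If $m_{ss_k}=3$ then $s$ and $t$ are joined by a simply laced edge with $q_s=0\ne q_{s_k}$, so the claim applies verbatim and immediately gives $T_sT_t=T_tT_s=T_s$ (the induction hypothesis is not even needed here). The substantive case is $m_{ss_k}=2$, where $T_s$ and $T_{s_k}$ commute and there is no braid relation between them to use directly, so the intermediate vertex $s_{k-1}$ must do the work. Writing $a=T_s$, $u=T_{s_{k-1}}$, $v=T_{s_k}$, I have $au=ua=a$ (induction), $a^2=a$, the simply laced braid $uvu=vuv$ from the path edge $\{s_{k-1},s_k\}$, the commutativity $av=va$, and the quadratic $v^2=(1-q)v+q$ with $q=q_{s_k}\ne0$. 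Left-multiplying the braid by $a$ and using $au=a$ gives $avu=avuv$. Using $av=va$ and $au=a$, the left side simplifies to $avu=(av)u=(va)u=va$, while the right side becomes $avuv=(avu)v=(va)v=vav=v^2a=(1-q)va+qa$; hence $va=(1-q)va+qa$, and cancelling $q\ne0$ gives $va=a$, so that $av=va=a$, i.e. $T_sT_t=T_tT_s=T_s$.

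I expect the main obstacle to be precisely the case $m_{ss_k}=2$, and the key insight that the quadratic relation with a nonzero parameter is indispensable. If one tries to combine only the braid relation with the absorption identities $au=ua=a$, then every natural manipulation collapses — for instance $a(uvu)a=a(vuv)a$ reduces on both sides to $ava$ — because the braid relation is too symmetric to distinguish $T_sT_t=T_s$ from other combinations; the asymmetry is supplied entirely by $v^2=(1-q)v+q$ together with $q\ne0$. A secondary bookkeeping point is that the hypothesis $m_{ss_i}\le3$ for every $i$ is exactly what makes the commute-or-braid dichotomy exhaustive and what lets the subpath satisfy the inductive hypotheses; if some $m_{ss_i}$ exceeded $3$, there would be neither a commutation nor a length-$3$ braid to exploit.
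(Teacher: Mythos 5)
Your proof is correct and takes essentially the same route as the paper's: induction on the path length $k$, with the base case $k=1$ settled by combining the braid relation, $T_s^2=T_s$, and the quadratic relation with cancellation of $q_t\ne0$, and the inductive step absorbing $T_{s_{k-1}}$ via the hypothesis and splitting on $m_{st}\in\{2,3\}$, where the $m_{st}=2$ computation ($va=avu=avuv=v^2a$, then cancel $q\ne0$) matches the paper's chain $T_tT_s=T_sT_rT_tT_r=T_sT_tT_rT_t=T_t^2T_s$ step for step. The only cosmetic difference is that in the base case you avoid right-multiplying by $T_{r'}^{-1}$ (which the paper implicitly uses, legitimately, since $q_{r'}\ne0$) in favor of one more expansion of $T_{r'}^2$.
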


\begin{proof}
We show $T_sT_t=T_tT_s=T_s$ by induction on $k$. If $k=1$ then 
\[
T_sT_tT_s = T_tT_sT_tT_s = T_t^2T_sT_t = q_t T_sT_t + (1-q_t) T_tT_sT_t.
\]
Since $q_t\ne0$, one has $T_sT_t = T_tT_sT_t$ and thus $T_s=T_tT_s$. Then $T_sT_t = T_sT_tT_s = T_s^2 =T_s$.

Now assume $k\geq2$. If $m_{st}=3$ then $T_sT_t=T_tT_s=T_s$ by the above argument. Assume $m_{st}=2$, i.e. $T_sT_t=T_tT_s$. Let $r=s_{k-1}$. Then $T_rT_s=T_sT_r=T_s$ by induction hypothesis. Thus
\[
T_tT_s = T_sT_rT_tT_r = T_sT_tT_rT_t = T_t^2T_s = q_tT_s + (1-q_t) T_tT_s.
\]
This implies $T_sT_t=T_tT_s = T_s$ which completes the proof.
\end{proof}

Now we provide a characterization for when $\H(\bq)$ is commutative. It implies that there exists $\bq\in\FF^S$ such that $\H(\bq)$ is collapse-free and commutative if and only if the Coxeter diagram of $(W,S)$ is simply laced and bipartite.

\begin{theorem}\label{thm:commutative}
Suppose that $\H(\bq)$ is collapse-free. Then $\H(\bq)$ is commutative if and only if the Coxeter diagram of $(W,S)$ is simply laced and exactly one of $q_s,q_t$ is $0$ for any pair of elements $s,t\in S$ with $m_{st}=3$.
\end{theorem}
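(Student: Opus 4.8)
The plan is to prove the two directions separately, treating the forward (``if'') implication as routine and concentrating the work on the reverse (``only if'') implication.

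For the ``if'' direction, I assume $(W,S)$ is simply laced and that each edge $\{s,t\}$ (i.e.\ each pair with $m_{st}=3$) has exactly one endpoint with parameter $0$. Since the $T_s$ generate $\H(\bq)$, it suffices to check $T_sT_t=T_tT_s$ for every pair $s,t\in S$. When $m_{st}=2$ this is immediate from the braid relation, and when $m_{st}=3$, say with $q_s=0\ne q_t$, it is exactly the conclusion $T_sT_t=T_tT_s=T_s$ of Lemma~\ref{lem:01=0} applied to the single edge $(s,t)$ (the case $k=1$). As the diagram is simply laced these are the only possibilities, so $\H(\bq)$ is commutative.

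For the ``only if'' direction, suppose $\H(\bq)$ is collapse-free and commutative. Commutativity passes to every subalgebra $\langle T_s,T_t\rangle$, so I plan to reduce each edge to a genuine rank-$2$ Hecke algebra and read off the constraints. The enabling step is a \emph{Key Claim}: no edge $\{s,t\}$ with $m_{st}$ odd can have both $q_s\ne0$ and $q_t\ne0$. Granting this, every odd edge has a zero endpoint, which is precisely the admissibility hypothesis of Proposition~\ref{prop:subalgebra} for an arbitrary pair; hence \emph{every} pair $\{s,t\}$ is admissible and $\langle T_s,T_t\rangle\cong\H_{\{s,t\}}(\bq)$. I then case on $m:=m_{st}$. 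If $m=\infty$ or $m$ is finite and even, Theorem~\ref{thm:TwBasis} gives the $T_w$-basis for this rank-$2$ algebra, so $T_sT_t=T_{st}\ne T_{ts}=T_tT_s$ because $st\ne ts$ in $W$, contradicting commutativity; the same $T_w$-basis argument disposes of odd $m$ with $q_s=q_t$ (both zero or both nonzero). Finally, if $m$ is odd with exactly one parameter zero, Lemma~\ref{lem:H(0,q)} shows $T_sT_t$ and $T_tT_s$ are distinct basis elements whenever $m\ge5$, forcing $m=3$. Thus every edge is simply laced with exactly one zero endpoint.

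The main obstacle is the Key Claim, i.e.\ the case of an odd edge with $q_s,q_t$ both nonzero: collapse-freeness forces $q_s=q_t=:q\ne0$, but $\langle T_s,T_t\rangle$ need not be admissible on its own, so I cannot directly invoke a genuine rank-$2$ algebra. My plan is twofold. First, commutativity together with the braid relation collapses the generators: writing the odd braid relation and using $T_sT_t=T_tT_s$ gives
\[
T_s^{(m-1)/2}\,T_t^{(m-1)/2}\,(T_s-T_t)=0,
\]
and since $q\ne0$ makes $T_s,T_t$ invertible, this yields $T_s=T_t$. Second, to contradict $T_s=T_t$, I enlarge $\{s,t\}$ to the set $U$ of all vertices joined to $s$ by an odd nonzero path, exactly as in the proof of Theorem~\ref{thm:12}. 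Collapse-freeness forces every parameter on $U$ to equal $q$, and the ``no odd nonzero edge leaves $U$'' argument shows $U$ is admissible; hence $\langle T_u:u\in U\rangle\cong\H_U(q)$ is a genuine equal-parameter Hecke algebra, whose $T_w$-basis (Theorem~\ref{thm:TwBasis}) contains $T_s$ and $T_t$ as distinct elements. This contradicts $T_s=T_t$ and establishes the Key Claim, after which the remaining rank-$2$ computations are routine.
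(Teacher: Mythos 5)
Your proposal is correct and takes essentially the same route as the paper: both directions reduce to rank-two parabolic subalgebras via admissibility and Proposition~\ref{prop:subalgebra}, with the $T_w$-basis of Theorem~\ref{thm:TwBasis} ruling out even and equal-parameter edges, Lemma~\ref{lem:H(0,q)} forcing $m_{st}=3$ in the mixed case, and Lemma~\ref{lem:01=0} giving the converse. Your Key Claim, proved via the odd-nonzero-path closure $U$, is a repackaging of the paper's maximal-subset admissibility step (and of the set $U$ in its proof of Theorem~\ref{thm:12}); your extra identity $T_s^{(m-1)/2}T_t^{(m-1)/2}(T_s-T_t)=0$ is valid but dispensable, since commutativity of $\H_U(q)$ already contradicts $T_sT_t\ne T_tT_s$ in its $T_w$-basis.
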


\begin{proof}
We first assume that $\H(\bq)$ is commutative. Let $s,t\in S$ with $m_{st}\geq3$. We need to show that $m_{st}=3$ and exactly one of $q_s$ and $q_t$ is $0$. To attain this we first show that $\{s,t\}$ is admissible. By symmetry, it suffices to show that $q_rq_s=0$ for any $r\in S\setminus \{s,t\}$ with $m_{rs}$ odd. Suppose to the contrary that $q_rq_s\ne0$. Then $q_r=q_s$ since $\H(\bq)$ is collapse-free. Let $R$ be a maximal subset of $S$ containing $s$ such that $q_a=q_b$ whenever $a,b\in R$ and $m_{ab}$ is odd. Then $r\in R$. The maximality forces $R$ to be admissible. By Proposition~\ref{prop:subalgebra}, $\H_R(\bq)$ is isomorphic to a subalgebra of $\H(\bq)$ and thus commutative. It also has a basis $\{T_w:w\in W_R\}$ by Theorem~\ref{thm:TwBasis}. Hence $m_{rs}\leq 2$, a contradiction.

Therefore $\{s,t\}$ is admissible. Then $\H_{\{s,t\}}(\bq)$ is isomorphic to a subalgebra of $\H(\bq)$, and hence commutative. Since $m_{st}\geq 3$, Theorem~\ref{thm:TwBasis} implies that $m_{st}$ is odd and $q_s\ne q_t$. Then exactly one of $q_s$ and $q_t$ must be $0$ since $\H(\bq)$ is collapse-free. By Lemma~\ref{lem:H(0,q)}, the dimension of $\H_{\{s,t\}}(\bq)$ is $2m-3$ and hence $m_{st}=3$. This proves one direction of the theorem. The other direction follows from Lemma~\ref{lem:01=0}. 
\end{proof}

Finally, using the results in this section we obtain a proof for Theorem~\ref{thm:TwBasis}. One can check that $\{T_w:w\in W\}$ spans $\H(\bq)$ using the word property of $W$ and the defining relations of $\H(\bq)$. If $q_s=q_t$ whenever $m_{st}$ is odd, then $\{T_w:w\in W\}$ is a basis for $\H(\bq)$ by Lusztig~\cite[Proposition~3.3]{Lusztig}. Conversely, suppose that $\{T_w:w\in W\}$ is a basis for $\H(\bq)$. Let $s,t\in S$ with $m:=m_{st}$ odd. The dimension $d$ of the subalgebra of $\H(\bq)$ generated by $T_s$ and $T_t$ equals the cardinality of the subgroup $\langle s,t\rangle$ of $W$, which is $2m$ by the word property of $W$. On the other hand,  if $q_s\ne q_t$ then either $d=1<2m$ when $q_sq_t\ne0$ by Theorem~\ref{thm:12}, or $d\leq 2m-3<2m$ when $q_sq_t=0$ by Proposition~\ref{prop:subalgebra} and Lemma~\ref{lem:H(0,q)}. Hence $q_s=q_t$.

\section{The simply laced case} \label{sec:simply-laced}

In this section we study a collapse-free Hecke algebra $\H(\bq)$ with independent parameters $\bq=(q_s\in\FF: s\in S)$ of a simply laced Coxeter system $(W,S)$. We first give some lemmas in order to construct a basis for $\H(\bq)$.

\begin{lemma}\label{lem:Si}
If $(W,S)$ is simply-laced then $S$ decomposes into a disjoint union of $S_1,\ldots,S_k$ such that 

\noindent(i) the elements of each $S_i$ receive the same parameters and are connected in the Coxeter diagram of $(W,S)$,

\noindent(ii) if $s\in S_i$, $t\in S_j$, $i\ne j$, then either $m_{st}=2$ or exactly one of $q_s$ and $q_t$ is $0$.
\end{lemma}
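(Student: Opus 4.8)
The plan is to construct the partition directly from the parameters via an auxiliary graph. Let $G$ denote the Coxeter diagram of $(W,S)$; since $(W,S)$ is simply laced, its edges are precisely the pairs $\{s,t\}$ with $m_{st}=3$ (all of weight $3$), while every other pair has $m_{st}=2$. I would form the spanning subgraph $G'$ of $G$ obtained by keeping only those edges $\{s,t\}$ with $q_s=q_t$, and then let $S_1,\ldots,S_k$ be the vertex sets of the connected components of $G'$. The claim is that this partition of $S$ satisfies (i) and (ii).

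For (i), I would argue as follows. Any two vertices of a fixed block $S_i$ are joined by a path in $G'$ lying inside $S_i$, and every edge of $G'$ joins vertices with equal parameter; walking along such a path then shows that all vertices of $S_i$ share a common parameter. Since $G'$ is a subgraph of $G$, the same path realizes $S_i$ as connected in the Coxeter diagram. This gives both parts of (i) at once.

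For (ii), I would take $s\in S_i$ and $t\in S_j$ with $i\ne j$ and assume $m_{st}\ne2$; because $(W,S)$ is simply laced this forces $m_{st}=3$, so $\{s,t\}$ is an edge of $G$. As $s$ and $t$ lie in different components of $G'$, this edge was discarded in passing to $G'$, so $q_s\ne q_t$. Now $m_{st}=3$ is odd and $q_s\ne q_t$, so collapse-freeness forces at least one of $q_s,q_t$ to be $0$; and $q_s\ne q_t$ prevents both from being $0$. Hence exactly one of $q_s,q_t$ is $0$, as required.

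I do not expect a serious obstacle here: the construction is essentially forced, and the verification is short. The two points that require a little care are recognizing that edges crossing between distinct blocks automatically carry unequal parameters (so that the collapse-free hypothesis applies), and noticing that this same distinctness sharpens ``at least one parameter vanishes'' into ``exactly one vanishes.'' The simply-laced assumption enters only to guarantee that each relevant edge has the odd weight $3$, which is exactly the input the collapse-free condition needs.
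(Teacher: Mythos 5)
Your proposal is correct and follows essentially the same route as the paper: delete from the Coxeter diagram every edge whose endpoints carry distinct parameters, take the connected components as the $S_i$, verify (i) by walking along paths of equal-parameter edges, and verify (ii) by noting that a cross-block edge forces $q_s\ne q_t$, whence collapse-freeness (the standing assumption of Section~\ref{sec:simply-laced}) makes exactly one parameter zero.
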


\begin{proof}
We remove from the Coxeter diagram of $(W,S)$ all the edges whose two end vertices correspond to distinct parameters. Let $S_1,\ldots,S_k$ be the vertex sets of the connected components of the resulting graph. 

If $s,t\in S_i$ then there exists a path from $s$ to $t$, whose vertices have the same parameter. Thus (i) holds.

If $s\in S_i$, $t\in S_j$, $i\ne j$, and $m_{st}=3$, then one has $q_s\ne q_t$ and thus exactly one of $q_s$ and $q_t$ is $0$ since $\H(\bq)$ is collapse-free. Hence (ii) holds.
\end{proof}

Let $W_i:=\langle S_i\rangle$ for all $i=1,\ldots,k$. We say an element $w_i\in W_i$ \emph{dominates} $S_j$ if $i\ne j$ and there exist $s\in S_i$ and $t\in S_j$ such that $q_s=0$, $m_{st}=3$, and $s$ occurs in some reduced expression of $w_i$. Let $W(\bq)$ be the set of all elements $(w_1,\ldots,w_k)\in W_1\times\cdots\times W_k$ such that $w_j=1$ whenever some $w_i$ dominates $S_j$. We need to define an $\H(\bq)$-action on $\FF W(\bq)$. Let $s$ be an arbitrary element in $S$. Then $s\in S_i$ for some $i\in [k]$. Let $\bw=(w_1,\ldots,w_k)\in W(\bq)$. We define $T_s(\bw):=(T_s(\bw)_1,\ldots,T_s(\bw)_k)\in\FF W(\bq)$ as follows.

If $S_i$ is dominated by some $w_j$, then $T_s$ acts \emph{trivially} on $\bw$, meaning that $T_s(\bw):=\bw$. Otherwise $T_s$ acts \emph{nontrivially} on $\bw$: if $\ell(sw_i)<\ell(w_i)$ then $T_s(\bw)_i=(1-q)w_i+qsw_i$ and $T_s(\bw)_j=w_j$ for all $j\ne i$; if $\ell(sw_i)>\ell(w_i)$ then $T_s(\bw)_i=sw_i$, $T_s(\bw)_j=1$ for all $j\ne i$ such that $s$ dominates $S_j$, and $T_s(\bw)_j=w_j$ for all $j\ne i$ such that $s$ does not dominates $S_j$. In other words, if $S_i$ is not dominated by $w_j$ for all $j\ne i$ then $T_s$ acts on the $i$th component of $\bw$ in the same way as the regular representation of the Hecke algebra $\H_{S_i}(q_s)$ (see (\ref{eq:reg})), and for all $j\ne i$ one has 
\[
T_s(\bw)_j=
\begin{cases}
w_j, & \textrm{if $s$ does not dominate $S_j$},\\
1, & \textrm{if $s$ dominates $S_j$}.
\end{cases}
\]

\begin{lemma}\label{lem:reg}
One has a well defined $\H(\bq)$-action on $\FF W(\bq)$ such that every element $(w_1,\ldots,w_k)$ in $W(\bq)$ is equal to $T_{w_1}\cdots T_{w_k}(1)$.
\end{lemma}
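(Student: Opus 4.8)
The plan is to verify that the prescribed rule, extended $\FF$-linearly, respects the defining relations of $\H(\bq)$, and then to deduce the generating formula by induction. First I would record the one structural fact that controls all the bookkeeping. By Lemma~\ref{lem:Si}(i) every generator in a block $S_i$ carries the same parameter, and by Lemma~\ref{lem:Si}(ii) an edge of weight $3$ joining two different blocks has exactly one endpoint with parameter $0$. Consequently a block with parameter $0$ is \emph{never} dominated (a dominating generator would have to have parameter $0$ and sit at the nonzero end of such an edge), while a block with nonzero parameter dominates nothing. Since the resetting operation only ever sends a nonzero-parameter block to $1$, it can never alter the domination status of any block; and for $s$ in a zero-parameter block, $T_s$ always acts nontrivially. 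I will use this repeatedly.

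Next I would check closure and the quadratic relations. For closure one notes that passing from $w_i$ to $sw_i$ only changes the support by $\{s\}$, so the domination constraints defining $W(\bq)$ are preserved (enlarging the support forces the newly dominated blocks to $1$, which is exactly what the rule does); thus each $T_s$ maps $W(\bq)$ into $\FF W(\bq)$. The quadratic relation $T_s^2=(1-q_s)T_s+q_s$ then splits into three quick cases via the structural fact: if the block $S_i$ of $s$ is dominated, $T_s$ acts as the identity; if $q_s\neq0$ and $S_i$ is undominated, $T_s$ is the genuine $q_s$-Hecke regular action~(\ref{eq:reg}) on coordinate $i$ and satisfies the relation there; and if $q_s=0$, then $T_s$ is idempotent on coordinate $i$ and a second application produces no new resets, giving $T_s^2=T_s$.

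The substantial work is the braid relations, and this is where I expect the main difficulty. I would separate same-block and cross-block pairs $s\in S_a$, $t\in S_b$. When $a=b$ the coordinate $a$ evolves by the regular representation of the honest Hecke algebra $\H_{S_a}(q)$, in which the braid relation already holds; the only extra thing to check is that the side blocks are reset identically by the two words $T_sT_tT_s$ and $T_tT_sT_t$, i.e. that a generator ``fires'' (is ever applied in the length-increasing direction) on one side iff it does on the other. Since firing depends only on the $\langle s,t\rangle$-part of $w_a$, this is a finite verification in the order-$6$ group $\langle s,t\rangle$. For $a\neq b$ with $m_{st}=2$ the two generators act on disjoint coordinates, and one checks that the reset/domination interactions commute, again using that a nonzero-parameter block dominates nothing. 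The crux is $a\neq b$ with $m_{st}=3$, where exactly one parameter, say $q_s=0$, vanishes, so $S_a$ is never dominated, $T_s$ is idempotent on coordinate $a$, and $s$ dominates $S_b$. I would argue that on both words coordinate $a$ ends at $T_s(w_a)$, whose support contains $s$, that this forces $S_b$ to be dominated once $s$ has entered coordinate $a$ and hence coordinate $b$ to collapse to $1$ on both words, and that $s$ fires on both sides under the identical condition that $s$ is a left ascent of $w_a$; splitting on whether $S_b$ is dominated from outside block $a$ makes this bookkeeping precise.

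Finally I would prove the generating formula. The key sub-claim is that if $u\in W_i$ and $\mathbf{v}\in W(\bq)$ has $i$th coordinate $1$ with $S_i$ undominated, then $T_u(\mathbf{v})$ equals $\mathbf{v}$ with its $i$th coordinate replaced by $u$ and every block dominated by $u$ reset to $1$; this follows by induction on $\ell(u)$, reading a reduced word right to left so that every step is length-increasing. Applying $T_{w_1}\cdots T_{w_k}$ to $(1,\dots,1)$ and processing the blocks from $k$ down to $1$, a downward induction then shows that the intermediate vector has $j$th coordinate $w_j$ for $j\ge i$ and $1$ for $j<i$: at stage $i$ either some already-built $w_j$ with $j>i$ dominates $S_i$, forcing $w_i=1$ by $\bw\in W(\bq)$ and making $T_{w_i}$ act trivially, or $S_i$ is undominated and the sub-claim installs $w_i$ while any block it dominates was already $1$ for the same reason. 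This yields $(w_1,\dots,w_k)=T_{w_1}\cdots T_{w_k}(1)$.
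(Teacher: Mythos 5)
Your proposal is correct and follows essentially the same route as the paper's proof: verify closure of the action, then the quadratic relations, then the braid relations by the same case split (same block via the regular representation of $\H_{S_i}(q)$, cross-block with $m_{st}=2$, and cross-block with $m_{st}=3$ where the zero-parameter generator dominates), and finally obtain the generating formula by induction on length. Your ``structural fact'' (a zero-parameter block is never dominated, a nonzero-parameter block dominates nothing) is exactly what the paper uses implicitly via Lemma~\ref{lem:Si}(ii), and your more explicit two-step induction for $T_{w_1}\cdots T_{w_k}(1)$ just fills in what the paper compresses into one sentence.
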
 

\begin{proof}
Let $s\in S_i$ and let $\bw=(w_1,\ldots,w_k)\in W(\bq)$. We first show that $T_s(\bw)\in\FF W(\bq)$. We may assume that $T_s$ acts nontrivially on $\bw$, i.e. $S_i$ is not dominated by $w_j$ for all $j\ne i$. If $\ell(sw_i)<\ell(w_i)$ then $\bw\in W(\bq)$ implies
\[
T_s(\bw)=(1-q)\bw+q(w_1,\ldots,w_{i-1},sw_i,w_{i+1},\ldots,w_k)\in W(\bq).
\]
If $\ell(sw_i)>\ell(w_i)$ then $T_s(\bw)\in W(\bq)$ since $T_s(\bw)_i=sw_i$ and $T_s(\bw)_j=1$ whenever $s$ dominates $S_j$.

Next we verify the quadratic relation for the action of $T_s$. If $T_s$ acts trivially on $\bw$ then $T_s^2=(1-q_s)T_s+q_s$ clearly holds. Assume that $T_s$ acts nontrivially on $\bw$ and apply $T_s$ again to $T_s(\bw)$. For the $i$-th component this is the same as the regular representation of $\H_{S_i}(q_s)$ (see~\ref{eq:reg}). Hence $T_s^2=(1-q_s)T_s+q_s$ holds for the $i$-th component. Let $j\ne i$. If $s$ does not dominates $S_j$ then $T_s(\bw)_j=w_j$ is fixed by $T_s$. If $s$ dominates $S_j$ then $T_s(w_j)=1$ is also fixed by $T_s$, and $q_s=0$. Hence $T_s^2=(1-q_s)T_s+q_s$ also holds for the $j$-th component for all $j\ne i$.

Next we verify the braid relation between $T_s$ and $T_t$ for any $t\in S_i\setminus\{s\}$. If one of $T_s$ and $T_t$ acts trivially on $\bw$ then so does the other. Thus we may assume that $T_s$ and $T_t$ both act nontrivially on $\bw$. Then they both act on the $i$-th component of $\bw$ by the regular representation of $\H_{S_i}(q_s)$ and hence the braid relation holds for this component. Let $j\ne i$ and let $T(s,t)$ be any product of $T_s$ and $T_t$ that contains both of them. If either $s$ or $t$ dominates $S_i$ then $T(s,t)$ sends $w_j$ to $1$. If neither of $s$ and $t$ dominates $S_j$ then $T(s,t)$ fixes $w_j$. Hence the braid relation between $T_s$ and $T_t$ also holds for the $j$-th component for all $j\ne i$.

Next assume that $t\in S_j$ and $i\ne j$. First consider the case when $s$ dominates $S_j$. Since $q_s=0$, one has $T_s(\bw)_i=w_i$ if $\ell(sw_i)<\ell(w_i)$ and $T_s(\bw)_i=sw_i$ if $\ell(sw_i)>\ell(w_i)$. In either case $T_t$ acts trivially on $T_s(\bw)$, i.e. $T_t(T_s(\bw))=T_s(\bw)$. On the other hand, since $q_t\ne0$, one sees that $T_t$ dominates nothing and thus fixes all components of $\bw$ except the $j$-th one. Since $s$ dominates $S_j$, one also has $T_s(T_t(\bw))_j=T_s(\bw)_j=1$. Hence $T_s(T_t(\bw))=T_s(\bw)$.

Similarly if $t$ dominates $S_i$ then one has $T_sT_t(\bw)=T_t(\bw)=T_tT_s(\bw)$. For the remaining case, that is, when $s$ does not dominate $S_j$ and $t$ does not dominates $S_i$, one has $m_{st}=2$ by Lemma~\ref{lem:Si} (ii). We need to show that both actions of $T_sT_t$ and $T_tT_s$ on $\bw$ are the same. One sees for both actions that $T_s$ and $T_t$ act separately on $w_i$ and $w_j$ by the regular representations of $\H_{S_i}(q_s)$ and $\H_{S_j}(q_t)$, respectively. Let $h\in[k]\setminus\{i,j\}$. If $S_h$ is dominated by either $s$ or $t$ then both $T_sT_t$ and $T_tT_s$ sends $w_h$ to $1$. Otherwise both $T_sT_t$ and $T_tT_s$ fixes $w_j$. Hence $T_sT_t(\bw)=T_tT_s(\bw)$.

Therefore one has a well defined action of $\H(\bq)$ on $\FF W(\bq)$. One sees that every element $(w_1,\ldots,w_k)$ in $W(\bq)$ is equal to $T_{w_1}\cdots T_{w_k}(1)$ by induction on $\ell(w_1)+\cdots+\ell(w_k)$. This completes the proof.
\end{proof}

\begin{theorem}\label{thm:SimplyLaced}
Assume that $(W,S)$ is simply-laced and $\H(\bq)$ is collapse-free. Then $\H(\bq)$ has a basis 
\[
B(\bq):=\{T_{w_1}\cdots T_{w_k}:(w_1,\ldots,w_k)\in W(\bq)\}.
\] 
\end{theorem}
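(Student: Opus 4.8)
The plan is to prove the two halves of ``basis'' separately: linear independence of $B(\bq)$ comes essentially for free from the module built in Lemma~\ref{lem:reg}, whereas spanning is the substantive part, which I would obtain by showing that the $\FF$-span of $B(\bq)$ is a left ideal containing the identity.

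For linear independence I would use the evaluation map $\Phi\colon \H(\bq)\to\FF W(\bq)$, $\Phi(h)=h(1)$, where $1=(1,\dots,1)$ and the action is the one furnished by Lemma~\ref{lem:reg}. That lemma gives $T_{w_1}\cdots T_{w_k}(1)=(w_1,\dots,w_k)$, so $\Phi$ is linear and carries the element of $B(\bq)$ indexed by $\bw=(w_1,\dots,w_k)$ to the standard basis vector $\bw$ of $\FF W(\bq)$. Hence if $\sum_{\bw\in W(\bq)}c_\bw\,T_{w_1}\cdots T_{w_k}=0$ in $\H(\bq)$, applying $\Phi$ yields $\sum_\bw c_\bw\,\bw=0$ in $\FF W(\bq)$, forcing every $c_\bw=0$; the same computation shows $\bw\mapsto T_{w_1}\cdots T_{w_k}$ is injective, so $|B(\bq)|=|W(\bq)|$.

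For spanning, note that $\H(\bq)$ is generated by $\{T_s:s\in S\}$ and that $1\in B(\bq)$ (the tuple $(1,\dots,1)$ vacuously lies in $W(\bq)$). Thus it suffices to prove that $V:=\mathrm{span}_\FF B(\bq)$ is closed under left multiplication by each $T_s$; then $V$ is a left ideal containing $1$, whence $V=\H(\bq)$. Fixing $s\in S_i$ and $\bw=(w_1,\dots,w_k)\in W(\bq)$, I would establish the identity
\[
T_s\cdot T_{w_1}\cdots T_{w_k}=\overline{T_s(\bw)},
\]
where $\overline{\,\cdot\,}$ replaces each tuple $(w_1',\dots,w_k')$ occurring in the module element $T_s(\bw)$ by the product $T_{w_1'}\cdots T_{w_k'}$; since $T_s(\bw)\in\FF W(\bq)$, the right-hand side lies in $V$, giving closure. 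The engine is Lemma~\ref{lem:01=0}: in the simply-laced setting, if $q_s=0$ and $s$ is joined to a component $S_j$ by an $m=3$ edge, then applying Lemma~\ref{lem:01=0} along same-parameter paths inside the connected set $S_j$ (all of whose vertices carry the nonzero parameter, by Lemma~\ref{lem:Si}) yields $T_sT_t=T_tT_s=T_s$ for every $t\in S_j$, and hence $T_sT_{w_j}=T_{w_j}T_s=T_s$ for all $w_j\in W_j$. Commutation ($m_{st}=2$) and the regular-representation relations~\eqref{eq:reg} inside $\H_{S_i}(q_s)$ handle the remaining interactions.

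The main obstacle is precisely this spanning computation: moving the leftmost $T_s$ into the $i$-th slot past $T_{w_1},\dots,T_{w_{i-1}}$ while correctly tracking the collapses dictated by domination. Concretely, one must verify that (a) when $S_i$ is dominated by some $w_j$, the absorption relations force $T_s\,T_{w_1}\cdots T_{w_k}=T_{w_1}\cdots T_{w_k}$, matching the trivial action; (b) when $s$ acts nontrivially and $s$ dominates $S_j$, the factor $T_{w_j}$ is absorbed, i.e. $w_j\mapsto 1$; and (c) components on which $s$ has no $m=3$ interaction are passed by commutation, so the $i$-th slot sees exactly the regular $\H_{S_i}(q_s)$-action. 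These are the same case distinctions already carried out at the level of the module in Lemma~\ref{lem:reg}, now re-read as identities in $\H(\bq)$, and the bookkeeping of which components are set to $1$ is exactly where the definition of $W(\bq)$ via domination is needed. Once closure is established we get $V=\H(\bq)$, and together with the independence above this shows $B(\bq)$ is a basis.
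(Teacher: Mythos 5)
Your proof is correct, and while the linear-independence half is exactly the paper's (evaluation of the Lemma~\ref{lem:reg} module at $1=(1,\ldots,1)$ separates the elements of $B(\bq)$), your spanning half takes a genuinely different route. The paper starts from Theorem~\ref{thm:TwBasis}, by which $\{T_w:w\in W\}$ already spans $\H(\bq)$, and then rewrites each individual $T_w$: given a reduced word for $w$, the commutations for $m_{st}=2$ and the two-sided absorptions $T_sT_r=T_s=T_rT_s$ (for all $r\in S_j$ when $s$ dominates $S_j$, via Lemmas~\ref{lem:Si} and~\ref{lem:01=0}) sort the letters into component blocks and delete dominated blocks, yielding $T_w=T_{w_1}\cdots T_{w_k}$ with $\bw\in W(\bq)$. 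You instead prove that $\mathrm{span}_\FF B(\bq)$ is a left ideal containing $1$, via the compatibility identity $T_s\cdot T_{w_1}\cdots T_{w_k}=\overline{T_s(\bw)}$; this in effect shows $\FF W(\bq)$ is the regular representation, which is strictly more than the paper needs, at the cost of heavier bookkeeping. One point your sketch glosses over: in the length-increasing case with $q_s=0$, the module sets $w_j=1$ for \emph{every} $S_j$ dominated by $s$, including indices $j>i$ whose factors $T_{w_j}$ are separated from $T_s$ by intermediate factors $T_{w_h}$ and by $T_{w_i}$ itself. Erasing those requires first commuting $T_{w_j}$ leftward, and this commutation is not free --- it holds precisely because $\bw\in W(\bq)$: if $w_j\ne 1$ then no $w_h$ dominates $S_j$, so by Lemma~\ref{lem:Si}(ii) no letter occurring in any reduced word of $w_h$ (or of $w_i$) can be adjacent to $S_j$, whence full commutation, after which $T_sT_{w_j}=T_s$ (Lemma~\ref{lem:01=0} applied to every $t\in S_j$ through a path inside $\{s\}\cup S_j$) finishes the erasure. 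Your phrase ``the same case distinctions as Lemma~\ref{lem:reg}, re-read as identities in $\H(\bq)$'' covers this in spirit, and all the needed facts are available, so I regard it as an acceptable sketch rather than a gap; the trade-off is that the paper's rewriting gets spanning almost for free from Theorem~\ref{thm:TwBasis}, while your ideal-closure argument is self-contained (it never invokes that spanning result) but must track the domination collapses on both sides of slot $i$.
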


\begin{proof}
Theorem~\ref{thm:TwBasis} shows that $\H(\bq)$ is spanned by $\{T_w:w\in W\}$. Let $s\in S_i$, $t\in S_j$, and $i\ne j$. If $m_{st}=2$ then $T_sT_t=T_tT_s$. If $m_{st}=3$ then we may assume $0=q_s\ne q_t$ by Lemma~\ref{lem:Si} and it follows from Lemma~\ref{lem:01=0} that $T_sT_r= T_s = T_rT_s$ for all $r\in S_j$. Hence for any $w\in W$ one can write $T_w=T_{w_1}\cdots T_{w_k}$ where $\bw=(w_1,\ldots,w_k)\in W(\bq)$. This shows that $B(\bq)$ is a spanning set for $\H(\bq)$. On the other hand, it follows from Lemma~\ref{lem:reg} that $B(\bq)$ is also linearly independent. Thus $B(\bq)$ is a basis for $\H(\bq)$.
\end{proof}

\begin{corollary}\label{cor:FiniteDim}
Suppose that $(W,S)$ is simply-laced and let $S_1,\ldots,S_k$ be given by Lemma~\ref{lem:Si}. 

(i) A collapse-free $\H(\bq)$ is finite dimensional if and only if $W_i:=\langle S_i\rangle$ is finite for all $i\in[k]$.

(ii) There exists $\bq\in \FF^S$ such that $\H(\bq)$ is collapse-free and finite dimensional if and only if there exists $R\subseteq S$ such that the parabolic subgroups $\langle R \rangle$ and $\langle S\setminus R\rangle$ are finite. 
\end{corollary}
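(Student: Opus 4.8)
The plan is to deduce both statements from the explicit basis $B(\bq)=\{T_{w_1}\cdots T_{w_k}:(w_1,\ldots,w_k)\in W(\bq)\}$ of Theorem~\ref{thm:SimplyLaced}, which identifies $\dim_\FF\H(\bq)$ with $|W(\bq)|$, together with the inclusion $W(\bq)\subseteq W_1\times\cdots\times W_k$. For part (i), the backward implication is immediate: if each $W_i$ is finite then $W_1\times\cdots\times W_k$ is finite, and hence so is its subset $W(\bq)$. For the forward implication I would exhibit a copy of each $W_i$ inside $W(\bq)$. Fixing $i$ and $w_i\in W_i$, I consider the tuple with $w_i$ in slot $i$ and the identity in every other slot. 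This tuple lies in $W(\bq)$, because the identity occurring in each slot $j\ne i$ dominates nothing (its reduced expression is empty), so the only domination constraints present are those coming from $w_i$, and these merely force $w_j=1$ for certain $j\ne i$, which already holds. Thus $|W_i|\le|W(\bq)|<\infty$ for every $i$, completing (i).

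For the forward direction of (ii), I would fix a collapse-free, finite dimensional $\H(\bq)$, take the $S_i$ from Lemma~\ref{lem:Si}, and note that each $W_i$ is finite by part (i). The key observation is a bipartiteness statement: since $(W,S)$ is simply laced and $\H(\bq)$ is collapse-free, any edge of the Coxeter diagram joining two \emph{distinct} components $S_i$ and $S_j$ has exactly one endpoint with parameter $0$ (Lemma~\ref{lem:Si}(ii), using that $m_{st}=3$ for such an edge). In particular, two distinct components both carrying parameter $0$ cannot be adjacent, since an edge between them would have equal parameters and so would never have been cut when forming the components. I would then set $R:=\{s\in S:q_s=0\}$, which is a union of components. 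The induced Coxeter diagram on $R$ has no edges between distinct components, so $\langle R\rangle$ is the direct product of those $W_i$ with parameter $0$ and is therefore finite; symmetrically $\langle S\setminus R\rangle$ is the direct product of the remaining $W_i$ and is finite.

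Conversely, given $R\subseteq S$ with $\langle R\rangle$ and $\langle S\setminus R\rangle$ finite, I would assign $q_s=0$ for $s\in R$ and $q_s=1$ for $s\in S\setminus R$. Every edge then either has equal endpoint parameters or has exactly one endpoint equal to $0$, so this $\bq$ is collapse-free; moreover the removed edges are exactly those between $R$ and $S\setminus R$, so each component $S_i$ of Lemma~\ref{lem:Si} lies entirely in $R$ or entirely in $S\setminus R$. Hence each $W_i$ is a subgroup of the finite group $\langle R\rangle$ or $\langle S\setminus R\rangle$ and is finite, and part (i) gives $\dim_\FF\H(\bq)=|W(\bq)|<\infty$. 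The only genuine subtlety, and the step I would treat most carefully, is the bipartiteness observation in the forward direction of (ii): it is precisely what guarantees that collecting the zero-parameter components into $R$ yields a parabolic subgroup that is an honest direct product of the finite $W_i$, rather than a larger and possibly infinite group.
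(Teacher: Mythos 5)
Your proposal is correct and follows essentially the same route as the paper: identify $\dim_\FF\H(\bq)$ with $|W(\bq)|$ via Theorem~\ref{thm:SimplyLaced}, use the injections $W_i\hookrightarrow W(\bq)\hookrightarrow W_1\times\cdots\times W_k$ for (i), and for (ii) take $R=\{s\in S:q_s=0\}$ in the forward direction and assign $q_s=0$ on $R$, $q_s=1$ on $S\setminus R$ in the converse. Your ``bipartiteness observation'' simply makes explicit the step the paper states without comment, namely that $\langle R\rangle=\langle S_1\rangle\times\cdots\times\langle S_j\rangle$ because Lemma~\ref{lem:Si}(ii) forbids edges between distinct components lying on the same side of $R$.
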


\begin{proof}
(i) By Theorem~\ref{thm:SimplyLaced}, a collapse-free $\H(\bq)$ is finite dimensional if and only if $W(\bq)$ is finite. For any $i\in[k]$, there are injections $W_i\hookrightarrow W(\bq)\hookrightarrow W_1\times\cdots\times W_k$. Hence $W(\bq)$ is finite if and only if $W_i$ is finite for all $i\in[k]$.

(ii) Suppose that $\H(\bq)$ is collapse-free and finite dimensional. Let $R:=\{s\in S:q_s=0\}$. By Lemma~\ref{lem:Si}, we may assume $R=S_1\cup\cdots \cup S_j$. Then $\langle R\rangle = \langle S_1\rangle\times\cdots\times\langle S_j\rangle$ and $\langle S\setminus R \rangle = \langle S_{j+1}\rangle\times\cdots\times\langle S_k\rangle$ are both finite groups by (i). Conversely, if there exists a subset $R\subseteq S$ such that $\langle R \rangle$ and $\langle S\setminus R\rangle$ are both finite groups, then $\H(\bq)$ is finite dimensional by (i), where $\bq$ is defined by $q_s=0$ for all $s\in R$ and $q_s=1$ for all $s\notin R$.
\end{proof}

\begin{example}
(i) It is well known that the Coxeter group of affine type $A$ is infinite and so is the associated Hecke algebra with a single parameter. However, if one takes some parameters to be $0$ and others to be $1$, the resulting algebra is finite dimensional, since all the $W_i$'s given in the above theorem are of finite type $A$.

(ii) Let the Coxeter diagram of $(W,S)$ be the complete graph $K_5$ with $5$ vertices. Assume that $\H(\bq)$ is collapse-free. There can be at most two different parameters $0$ and $q\ne0$. Both $R:=\{s\in S:q_s=0\}$ and its complement $S\setminus R=\{s\in S:q_s=q\}$ are admissible subsets of $S$, the larger one of which contains at least $3$ elements and thus gives a copy of the infinite dimensional Hecke algebra of affine type $A_3$ with a single parameter as a subalgebra of $\H(\bq)$. Therefore $\H(\bq)$ is never finite dimensional in such cases.
\end{example}

\section{The simply laced bipartite case}\label{sec:bipartite}

By Theorem~\ref{thm:commutative} there exists $\bq\in\FF^S$ such that $\H(\bq)$ is collapse-free and commutative if and only if the Coxeter diagram of $(W,S)$ is simply laced and bipartite. We give more results for such case in this section. Let $T_I:=\prod_{i\in I} T_i$ for all $I\in\II(G)$, where $\II(G)$ consists of independent sets in the underlying graph $G$ of the Coxeter diagram of $(W,S)$.

\begin{corollary}\label{cor:ComDim}
A collapse-free and commutative $\H(\bq)$ has a basis $\{T_I:I\in\II(G)\}$. In particular, if $(W,S)$ is of type $A_n$ then the dimension of $\H(\bq)$ equals the Fibonacci number $F_{n+2}$.
\end{corollary}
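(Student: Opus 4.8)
The plan is to read off Corollary~\ref{cor:ComDim} directly from the basis $B(\bq)$ produced in Theorem~\ref{thm:SimplyLaced}, by showing that in the commutative case the index set $W(\bq)$ is canonically identified with $\II(G)$ and that under this identification $B(\bq)$ becomes exactly $\{T_I:I\in\II(G)\}$. First I would invoke Theorem~\ref{thm:commutative}: since $\H(\bq)$ is collapse-free and commutative, $(W,S)$ is simply laced and every edge $\{s,t\}$ of $G$ (i.e.\ every pair with $m_{st}=3$) has exactly one endpoint with parameter $0$. Setting $Z:=\{s\in S:q_s=0\}$ and $N:=\{s\in S:q_s\ne0\}$, this says that every edge of $G$ joins $Z$ to $N$; in particular $G$ has no edge inside $Z$ and none inside $N$. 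Feeding this into Lemma~\ref{lem:Si}, whose decomposition deletes all edges whose endpoints carry distinct parameters, I note that here \emph{every} edge does, so all edges are deleted and each block $S_i$ is a single vertex. Hence $k=|S|$, I may write $S_i=\{s_i\}$, and $W_i=\langle s_i\rangle=\{1,s_i\}$.

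Next I would identify $W(\bq)$ with $\II(G)$. A tuple $\bw=(w_1,\ldots,w_k)\in W_1\times\cdots\times W_k$ is determined by the vertex set $\hat J:=\{s_i:w_i=s_i\}\subseteq S$, and its basis element is $T_{w_1}\cdots T_{w_k}=\prod_{s\in\hat J}T_s=T_{\hat J}$. Unwinding the domination condition: since each $S_i$ is the singleton $\{s_i\}$, the element $w_i=s_i$ dominates $S_j$ precisely when $q_{s_i}=0$ and $s_i\sim s_j$ in $G$. Thus $\bw\in W(\bq)$ if and only if for every edge $s_is_j$ with $s_i\in Z\cap\hat J$ one has $s_j\notin\hat J$. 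Because every edge has a \emph{unique} endpoint in $Z$, this constraint, though phrased only from the $Z$-endpoint, forbids exactly the edges with both endpoints in $\hat J$; together with the absence of edges inside $Z$ or inside $N$, it says precisely that $\hat J$ is independent. Hence $\bw\mapsto\hat J$ is a bijection $W(\bq)\xrightarrow{\sim}\II(G)$, and Theorem~\ref{thm:SimplyLaced} then gives that $\{T_I:I\in\II(G)\}$ is a basis for $\H(\bq)$.

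Finally, for type $A_n$ the diagram $G$ is the path on $n$ vertices. Letting $f(n):=|\II(G)|$ and conditioning on whether the last vertex lies in the independent set yields $f(n)=f(n-1)+f(n-2)$ with $f(0)=1$ and $f(1)=2$, so $f(n)=F_{n+2}$, as claimed. The real content is the linear independence of the $T_I$, which is invisible from the defining relations alone; the plan sidesteps it by borrowing the honest module action constructed in Lemma~\ref{lem:reg} and the resulting basis of Theorem~\ref{thm:SimplyLaced}. I expect the one subtle point to be the bookkeeping in the previous paragraph: verifying that the asymmetric domination condition — in which only $Z$-vertices dominate — encodes the symmetric notion of an independent set, which works precisely because each edge of the bipartite graph $G$ carries a single $Z$-endpoint.
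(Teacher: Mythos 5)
Your proof is correct and takes essentially the same route as the paper: Theorem~\ref{thm:commutative} together with Lemma~\ref{lem:Si} makes every block $S_i$ a singleton, so the basis $B(\bq)$ of Theorem~\ref{thm:SimplyLaced} becomes $\{T_I:I\in\II(G)\}$, and the type-$A$ count is the same end-vertex Fibonacci recursion. The only difference is that you spell out explicitly the bijection $W(\bq)\cong\II(G)$ via the one-sided domination condition (which the paper leaves implicit), and your verification that the asymmetric condition encodes independence, using that each edge has a unique zero-parameter endpoint, is sound.
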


\begin{proof}
By Theorem~\ref{thm:commutative}, the Coxeter diagram of $(W,S)$ is a simply laced and bipartite graph $G$ with all edges between the two subsets $\{s\in S:q_s=0\}$ and $\{t\in S:q_t\ne0\}$. Hence the subsets $S_1,\ldots,S_k$ given by Lemma~\ref{lem:Si} are all singleton sets. Then the basis $B(\bq)$ for $\H(\bq)$ given in Theorem~\ref{thm:SimplyLaced} consists of the elements $T_I$ for all $I\in\II(G)$.

Now suppose that $(W,S)$ is of type $A_n$, i.e. its Coxeter diagram is isomorphic to the path $P_n$ with $n$ vertices. If an independent set $I$ in $P_n$ contains one end vertex of $P_n$, then removing this end point from $I$ gives an independent set of $P_{n-2}$; otherwise $I$ is an independent set of $P_{n-1}$. Thus $|\II(P_n)|=|\II(P_{n-1})|+|\II(P_{n-2})|$. One also sees that $|\II(P_i)|=i+1$ if $i=0,1$. Thus $|\II(P_n)|=F_{n+2}$ for all $n\geq0$.
\end{proof}

Computations in {\sf Magma} suggest the following conjecture.

\begin{conjecture}
Suppose that the Coxeter diagram of $(W,S)$ is a simply laced and bipartite graph $G$. The minimum dimension of a collapse-free $\H(\bq)$ is $|\II(G)|$, which is attained when it is commutative.
\end{conjecture}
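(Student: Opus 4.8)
The plan is to split the statement into an upper bound, which is essentially already in hand, and a lower bound, which carries the real content. For the upper bound, since $G$ is simply laced and bipartite, Theorem~\ref{thm:commutative} guarantees a collapse-free and commutative $\H(\bq)$ (take $q_s=0$ on one colour class of $G$ and $q_s\ne0$ on the other), and Corollary~\ref{cor:ComDim} gives it dimension exactly $|\II(G)|$. Hence the minimum dimension is at most $|\II(G)|$ and is attained in the commutative case. It therefore remains to prove that \emph{every} collapse-free $\H(\bq)$ satisfies $\dim\H(\bq)\ge|\II(G)|$.

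For the lower bound I would first reduce to pure combinatorics. By Theorem~\ref{thm:SimplyLaced} one has $\dim\H(\bq)=|W(\bq)|$, and inspecting the definition of $W(\bq)$ shows that this cardinality depends on $\bq$ only through the block partition $S=S_1\sqcup\cdots\sqcup S_k$ of Lemma~\ref{lem:Si} together with the labelling of each block as zero or nonzero. Collapse-freeness forces adjacent blocks to receive opposite labels (two adjacent nonzero blocks violate collapse-freeness, and two adjacent zero blocks would be merged), so the admissible data are exactly the partitions of $V(G)$ into connected induced subgraphs whose block quotient is properly $2$-coloured, the commutative case being the all-singleton partition. Writing $\mathcal Z$ and $\mathcal N$ for the sets of zero- and nonzero-blocks and using that only zero-blocks dominate, one obtains the explicit count
\[
|W(\bq)|=\sum_{(w_B)_{B\in\mathcal Z}}\ \prod_{B'\in\mathcal N} d(B'),\qquad d(B')=\begin{cases}|W_{B'}|,& B'\text{ not dominated},\\ 1,& B'\text{ dominated},\end{cases}
\]
where $B'$ is dominated precisely when some adjacent $w_B$ has, in its support, a generator joined by an edge of $G$ to $B'$. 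In the all-singleton case this sum collapses to the identity $\sum_{A\subseteq X}2^{|Y\setminus\Gamma(A)|}=|\II(G)|$, where $X,Y$ are the colour classes and $\Gamma$ denotes neighbourhood in $G$. The goal thus becomes to show that the all-singleton configuration minimizes the displayed count.

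The two natural routes are: (a) an explicit injection $\II(G)\hookrightarrow W(\bq)$, sending an independent set $I$ to the tuple whose $B$-component encodes $I\cap B$ (e.g.\ as the commuting product of the selected generators); and (b) a transfer-matrix/deletion recursion on $G$ lower-bounding the sum by its singleton value. Within a single block both are easy: for any finite-type block $B$ one has $|\II(B)|\le|W_B|$ (for type $A$ this is $F_{m+2}\le(m+1)!$ with $W_B\cong\SS_{m+1}$), and an infinite block makes $\dim\H(\bq)$ infinite, so only finite-type blocks are relevant.

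The main obstacle is the coupling between blocks through domination: a dominated nonzero block $B'$ is forced to contribute $w_{B'}=1$, yet $I\cap B'$ may be nonempty, so the naive component-wise encoding fails to be injective and the summand-wise comparison is not visibly monotone as one merges singletons into larger blocks. Overcoming this requires redistributing the independent-set data of a dominated block into the ample freedom of the dominating zero-block, and making this redistribution globally consistent across $G$. This is exactly the step that has so far only been carried through for type $A$, where the blocks are paths and the Fibonacci recursion behind Corollary~\ref{cor:ComDim} can be promoted to the full count (Theorem~\ref{thm:MinDimTypeA}); extending the bookkeeping to arbitrary bipartite $G$ and to blocks of type $D$, $E$, $\widetilde A$, and $\widetilde D$ is where I expect the difficulty to concentrate.
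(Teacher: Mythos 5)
You should read this statement for what it is in the paper: a conjecture. The paper itself proves only the attainment half (via Theorem~\ref{thm:commutative} and Corollary~\ref{cor:ComDim}) and verifies the lower bound for type $A$ (Theorem~\ref{thm:MinDimTypeA}); no proof of the general lower bound exists there, so your proposal cannot be judged against a complete argument. With that understood, what you do prove is correct and follows the paper's own route. Your upper-bound argument (put $q_s=0$ on one colour class and $q_s\ne0$ on the other, then invoke Theorem~\ref{thm:commutative} and Corollary~\ref{cor:ComDim}) is exactly the paper's. Your reduction of the lower bound via $\dim\H(\bq)=|W(\bq)|$ from Theorem~\ref{thm:SimplyLaced}, the observation that collapse-freeness forces adjacent blocks of Lemma~\ref{lem:Si} to carry opposite zero/nonzero labels (two adjacent nonzero blocks with distinct parameters would collapse, two adjacent zero blocks would have been merged, and only zero blocks can dominate since domination requires $q_s=0$), and the resulting count $\sum_{(w_B)}\prod_{B'}d(B')$ with $d(B')\in\{1,|W_{B'}|\}$ are all sound, and they are a correct global repackaging of what the paper does locally in type $A$: the two cases $w_j=1$ and $w_j\ne1$ in the proof of Theorem~\ref{thm:MinDimTypeA} produce precisely your two values of $d$, and Lemma~\ref{lem:Fib} supplies the needed monotonicity, the induction succeeding because deleting a nonzero block disconnects a path into two shorter paths --- an instance of your route (b). Your diagnosis of the general obstruction is also accurate: a dominated nonzero block $B'$ contributes only the element $1$ while $I\cap B'$ may be nonempty, so the naive componentwise map $\II(G)\hookrightarrow W(\bq)$ is not injective, and no clean disconnection is available for an arbitrary bipartite $G$ with blocks of type $D$ or $E$. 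In short, you establish exactly what the paper establishes, by essentially the same means, and you correctly and explicitly mark the remaining step as open rather than claiming a proof --- which is the right assessment of this statement.
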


We will verify this conjecture for type $A_n$. We first need a lemma on the \emph{Fibonacci numbers}, which are defined as $F_0=0$, $F_1=1$, and $F_n=F_{n-1}+F_{n-2}$ for all $n\geq2$.

\begin{lemma}\label{lem:Fib}
If $k\geq 4$ then $k!\geq F_{k+3}+2$. Also, if $a\geq1$ and $b\geq0$ then $F_{a+b}=F_a F_{b+1} + F_{a-1} F_b \leq F_aF_{b+2}$.
\end{lemma}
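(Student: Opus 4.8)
The plan is to prove the two claims of Lemma~\ref{lem:Fib} separately, as they are essentially independent. For the first claim, that $k! \geq F_{k+3}+2$ for $k\geq 4$, I would proceed by induction on $k$. The base case $k=4$ is a direct computation: $4!=24$ and $F_7+2 = 13+2 = 15$, so $24\geq 15$ holds. For the inductive step, assuming $k!\geq F_{k+3}+2$, I would estimate $(k+1)! = (k+1)\cdot k! \geq (k+1)(F_{k+3}+2)$ and show this is at least $F_{k+4}+2$. The key observation is that $(k+1)F_{k+3} \geq F_{k+4} = F_{k+3}+F_{k+2}$ already for $k\geq 1$ (since $kF_{k+3}\geq F_{k+2}$ is clear), leaving plenty of slack from the remaining $2(k+1)\geq 2$ term to cover the additive constant. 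This step is entirely routine.

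For the second claim I would treat the two parts in turn. The identity $F_{a+b} = F_a F_{b+1} + F_{a-1}F_b$ is the standard Fibonacci addition formula, valid for $a\geq 1$ and $b\geq 0$; I would cite it as well known or establish it by a quick induction on $b$ (base cases $b=0,1$ using $F_0=0$, $F_1=1$, then the Fibonacci recurrence). The inequality $F_a F_{b+1} + F_{a-1} F_b \leq F_a F_{b+2}$ then reduces, after subtracting $F_aF_{b+1}$ from both sides, to showing $F_{a-1}F_b \leq F_a(F_{b+2}-F_{b+1}) = F_a F_b$. Since $F_b\geq 0$ and $F_{a-1}\leq F_a$ (the Fibonacci sequence is nondecreasing on indices $\geq 0$), this is immediate. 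Thus the whole second claim follows with essentially no computation beyond invoking monotonicity and the recurrence.

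The one point requiring a little care is the boundary behavior in the addition formula and the monotonicity statement: when $b=0$ the term $F_{a-1}F_b$ vanishes and the inequality $F_{a+0}=F_a\leq F_a F_2 = F_a$ holds with equality, so the edge case is consistent, and the requirement $a\geq 1$ guarantees $F_{a-1}$ is defined with $F_0=0$. I expect no genuine obstacle here; the main thing is simply to assemble the standard ingredients (the addition formula and monotonicity of $F_n$) cleanly. If anything, the first claim's induction is the more delicate of the two, only because one must confirm the multiplicative growth of $k!$ outpaces the Fibonacci recurrence together with the additive constant $+2$, but the slack is large enough that a crude bound suffices.
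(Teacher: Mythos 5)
Your proof is correct and follows essentially the same route as the paper: the first claim by a routine induction on $k$, and the second via the standard Fibonacci addition formula together with $F_{a-1}\leq F_a$, which gives $F_{a+b}\leq F_a(F_{b+1}+F_b)=F_aF_{b+2}$. The only difference is that you spell out the details the paper leaves implicit (the paper cites the addition formula as well known, pointing to its Example on decomposing $\H(P_n,R)$, and dismisses the factorial bound as an easy induction), so no gap to report.
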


\begin{proof}
The first result follows easily by induction. It is well known that $F_{a+b}=F_a F_{b+1} + F_{a-1} F_b$ (see Example~\ref{example:Fib1}). Hence $F_{a+b} \leq F_a(F_{b+1}+F_b) =F_aF_{b+2}$.
\end{proof}

\begin{theorem}\label{thm:MinDimTypeA}
Let $\H(\bq)$ be a collapse-free Hecke algebra of type $A_n$ with independent parameters. Then its dimension  is at least the Fibonacci number $F_{n+2}$, and the equality holds if and only if $\H(\bq)$ is commutative. 
\end{theorem}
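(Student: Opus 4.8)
The plan is to prove both the lower bound $\dim\H(\bq)\geq F_{n+2}$ and the equality characterization simultaneously, by analyzing the decomposition $S=S_1\sqcup\cdots\sqcup S_k$ from Lemma~\ref{lem:Si}. Since $(W,S)$ is of type $A_n$, the Coxeter diagram is the path $P_n$ with vertices $s_1,\ldots,s_n$, and each $S_i$ is a set of consecutive vertices receiving the same parameter, so each parabolic subgroup $W_i=\langle S_i\rangle$ is a symmetric group $\SS_{|S_i|+1}$ of finite type $A$. By Theorem~\ref{thm:SimplyLaced}, $\dim\H(\bq)=|W(\bq)|$, where $W(\bq)\subseteq W_1\times\cdots\times W_k$ is the set of tuples $(w_1,\ldots,w_k)$ such that $w_j=1$ whenever some $w_i$ dominates $S_j$. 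So the problem reduces entirely to a combinatorial estimate: counting the tuples in $W(\bq)$ and comparing this count to $F_{n+2}$.

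First I would handle the commutative case as the baseline: by Theorem~\ref{thm:commutative}, $\H(\bq)$ is commutative exactly when every $S_i$ is a singleton and the parameters strictly alternate between $0$ and nonzero along the path, and by Corollary~\ref{cor:ComDim} this gives dimension exactly $F_{n+2}$. The heart of the argument is then to show that any other collapse-free configuration has strictly larger dimension. The natural approach is induction on $n$ (or on $k$), peeling off the last block $S_k$. If we write $|S_k|+1=a$ (so $|S_k|=a-1$ vertices with $W_k\cong\SS_a$) and let the remaining path account for the first $n-(a-1)$ generators, then each choice of $(w_1,\ldots,w_{k-1})$ either dominates $S_k$ (forcing $w_k=1$, contributing $1$ to the $k$th coordinate) or does not (allowing all $a!=|W_k|$ choices of $w_k$). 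I expect the cleanest estimate comes from using $F_{a+b}=F_aF_{b+1}+F_{a-1}F_b\leq F_aF_{b+2}$ from Lemma~\ref{lem:Fib} to control how a block of size $a-1$ inflates the Fibonacci count of the rest, together with the bound $k!\geq F_{k+3}+2$ for $k\geq4$ to show that any block with $|S_i|\geq 3$ (i.e. $W_i$ containing $\SS_4$ or larger) overshoots $F_{n+2}$ by at least the slack recorded in that lemma.

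The main obstacle will be organizing the induction so that the domination constraints are accounted for correctly: the factor $|W_i|=(|S_i|+1)!$ is only fully realized when $S_i$ is not dominated, and domination propagates from blocks whose parameter is $0$ through simply-laced edges into adjacent blocks. I would set up the count as a recursion on the path read left to right, tracking at each block boundary whether the accumulated tuple already forces the next block to be trivial. Concretely, I would compare $|W(\bq)|$ against $F_{n+2}=|\II(P_n)|$ block by block, showing that replacing a nonsingleton block (or a block whose parameter does not alternate, creating a pair of adjacent nonzero parameters joined by an edge, which by Theorem~\ref{thm:TwBasis} and the word property blows the local dimension up to $2m=6$ on an $A_2$ sub-path) by the alternating singleton pattern can only decrease the dimension, with equality forcing the commutative pattern. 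The Fibonacci identities in Lemma~\ref{lem:Fib} are tailored precisely for this: $F_aF_{b+2}\geq F_{a+b}$ lets me absorb the cost of a large block into a product of smaller Fibonacci numbers while keeping the inequality in the right direction, and the strictness in $k!\geq F_{k+3}+2$ pins down that any deviation from the alternating singleton configuration is strict.

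Finally, to complete the equality characterization I would argue the converse direction carefully: if $\dim\H(\bq)=F_{n+2}$, then every inequality used in the induction must be an equality, which forces every $S_i$ to be a singleton (since a block of size $\geq 2$ already gives a factor $\geq 3! = 6$ with strict slack by Lemma~\ref{lem:Fib}) and forces the parameters to alternate strictly (since two adjacent equal nonzero parameters would, being collapse-free, have to be equal and create an $A_2$ sub-path of dimension $6>F_4=3$). By Theorem~\ref{thm:commutative} these two conditions are exactly commutativity, closing the loop.
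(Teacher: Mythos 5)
Your skeleton matches the paper's proof in every structural respect---reduce to counting $W(\bq)$ via Theorem~\ref{thm:SimplyLaced}, induct on $n$, invoke the two Fibonacci estimates of Lemma~\ref{lem:Fib}, and close the equality case through Theorem~\ref{thm:commutative} and Corollary~\ref{cor:ComDim}---but your inductive step contains a directional error about domination that makes it fail as stated. An element $w_i$ dominates $S_j$ only when $S_i$ contains a vertex with parameter $0$ adjacent to $S_j$; since any two adjacent blocks have exactly one zero parameter by Lemma~\ref{lem:Si}(ii), only zero-parameter blocks can dominate and only nonzero-parameter blocks can be dominated. So your dichotomy ``each choice of $(w_1,\ldots,w_{k-1})$ either dominates $S_k$, forcing $w_k=1$, or does not, allowing all $a!$ choices'' is simply wrong when the last block $S_k$ has parameter $0$: then $S_k$ is never dominated, $w_k$ is always free, and it is $w_k$ that can force $w_{k-1}=1$. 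You cannot repair this by peeling from the other end, since both end blocks can be zero blocks (e.g.\ $\bq=(0,q,0)$ on $A_3$). The paper's proof avoids the issue by conditioning not at an end but on an arbitrary block $S_j$ with nonzero parameter (such a block exists unless all parameters are equal, in which case $\dim\H(\bq)=(n+1)!\geq F_{n+2}$ is the trivial base case, which you also omit): if $w_j\ne1$ then the neighboring words must avoid the two boundary vertices $s_a$ and $s_{a+b+1}$, decoupling the count into $F_{a+1}\bigl((b+1)!-1\bigr)F_{c+1}$ by induction on both sides, while $w_j=1$ contributes at least $F_{a+2}F_{c+2}$; your one-sided peeling is exactly the special case $c=0$ of this. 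Your fallback---a left-to-right scan tracking whether the accumulated tuple trivializes the next block---could in principle handle zero end blocks, but you never set it up, and it requires tracking whether each zero-block word contains its boundary generator(s), which is strictly more bookkeeping than the paper's two-sided split.

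Two further steps are asserted rather than proved. The exchange claim that ``replacing a nonsingleton block by the alternating singleton pattern can only decrease the dimension'' is never established and is not needed: the paper instead directly proves $f(a,b,c):=F_{a+1}\bigl((b+1)!-1\bigr)F_{c+1}+F_{a+2}F_{c+2}\geq F_{n+2}$, with equality only when $b=1$, using the identity $F_{a+1}F_{c+1}+F_{a+2}F_{c+2}=F_{a+c+3}$ and a case split $b=1$, $b=2$, $b\geq3$. Note in particular that the factorial bound $k!\geq F_{k+3}+2$ of Lemma~\ref{lem:Fib} applies only for $k=b+1\geq4$, i.e.\ $b\geq3$; the case $b=2$ needs the separate estimate $4F_{a+1}F_{c+1}+F_{a+c+3}>F_4F_{a+c}+F_{n+1}\geq F_{n+2}$, so your blanket claim of ``strict slack by Lemma~\ref{lem:Fib}'' for every block of size at least $2$ glosses over a case the lemma does not cover. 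Likewise, observing that two adjacent equal nonzero parameters give a local $A_2$ subalgebra of dimension $6>F_4$ does not by itself yield a global strict inequality, since the dimension of $\H(\bq)$ is not a product of local dimensions; the decoupling provided by $f(a,b,c)$ is what makes the local excess propagate. Your equality analysis is otherwise on the right track: once $b=1$ is forced, equality pins the flanking subalgebras to dimensions $F_{a+2}$ and $F_{c+2}$, hence they are commutative by induction, and $q_a=0$, $q_{a+1}\ne0$, $q_{a+2}=0$ together with Theorem~\ref{thm:commutative} force the strict alternation, i.e.\ commutativity.
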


\begin{proof}
We prove the result by induction on $n$. The Coxeter diagram for type $A_n$ is the path $\xymatrix @C=8pt{ s_1 \ar@{-}[r] & s_2 \ar@{-}[r] & \cdots \ar@{-}[r] & s_n}$. We write $q_i:=q_{s_i}$ for all $i\in[n]$. Let $S_1,\ldots,S_k$ be the subsets of $S$ given by Lemma~\ref{lem:Si}. Then $S_j$ is a path of length $n_j\geq1$ for every $j\in[k]$. We may assume, without loss of generality, that 
\[
S_j=\{s_i: n_1+\cdots+n_{j-1}<i\leq n_1+\cdots+n_j\},\quad\forall j\in[k].
\]

If all parameters in $\bq$ are the same, then $\H(\bq)$ has dimension $(n+1)!\geq F_{n+2}$. Thus we may assume that there exists $j\in[k]$ such that $q_s=q\ne0$ for all $s\in S_j$. Let $a=n_1+\cdots+n_{j-1}$, $b=n_j$, and $c=n_{j+1}+\cdots+n_k$. By convention $a=0$ if $j=1$, and $c=0$ if $j=k$. One sees that $s_a$ and $s_{a+b+1}$ both dominate $S_j$.

By Theorem~\ref{thm:SimplyLaced}, $\H(\bq)$ has dimension $|W(\bq)|$. We need to count the elements $(w_1,\ldots,w_k)$ in $W(\bq)$. If $w_j\ne1$ then any reduced word of $w_{j-1}$ cannot contain $s_a$ and any reduced word of $w_{j+1}$ cannot contain $s_{a+b+1}$. It follows that $(w_1,\ldots,w_{j-1})$ and $(w_{j+1},\ldots,w_k)$ are arbitrary elements in $W(q_{i}:1\leq i\leq a-1)$ and $W(q_{i}:a+b+2\leq i\leq n)$, respectively. Then the number of choices for $(w_1,\ldots,w_k)$ in this case is at least $F_{a+1}((b+1)!-1)F_{c+1}$, by induction hypothesis. Note that this still holds even if $a=0$ or $c=0$, since $F_1=1$.

Similarly, if $w_j=1$ the number of choices for $(w_1,\ldots,w_k)$ is at least $F_{a+2}F_{c+2}$ by induction hypothesis. %Again this holds even if $a=0$ or $c=0$ since $F_2=1$.

Thus the dimension of $\H(\bq)$ is at least $f(a,b,c):=F_{a+1}((b+1)!-1)F_{c+1}+F_{a+2}F_{c+2}$.
By Lemma~\ref{lem:Fib},
\[
f(a,b,c)=F_{a+1}((b+1)!-2)F_{c+1}+F_{a+c+3}.
\]
If $b=1$ then this becomes $f(a,b,c)=F_{a+c+3}=F_{n+2}$.
If $b=2$ then Lemma~\ref{lem:Fib} implies that
\[
f(a,b,c)> 3F_{a+1}F_{c+1} +F_{a+c+3} \geq F_4 F_{a+c} + F_{n+1} \geq F_n + F_{n+1}=F_{n+2}.
\]
If $b\geq3$ then Lemma~\ref{lem:Fib} implies that 
\[
f(a,b,c)> F_{a+1}F_{b+4}F_{c+1}\geq F_{a+b+3}F_{c+1}\geq F_{n+2}.
\]
Therefore $f(a,b,c)\geq F_{n+2}$ always holds.

Finally, assume $f(a,b,c)=F_{n+2}$. By the above argument, this equality is possible only if $b=1$ and the dimensions of $\H(q_1,\ldots,q_a)$ and $\H(q_{a+2},\ldots,q_n)$ are $F_{a+2}$ and $F_{c+2}$, respectively. Then $\H(q_1,\ldots,q_a)$ and $\H(q_{a+2},\ldots,q_n)$ are commutative by induction hypothesis. The definition for $a$, $b$, and $c$ implies $q_a=0$, $q_{a+1}\ne0$, and $q_{a+2}=0$. It follows from Theorem~\ref{thm:commutative} that $q_i=0$ when $i\equiv a$ mod $2$ and $q_i\ne0$ otherwise. Hence $\H(\bq)$ must be commutative. On the other hand, if $\H(\bq)$ is commutative then its dimension is $F_{n+2}$ by Corollary~\ref{cor:ComDim}. This completes the proof.
\end{proof}

Next we explain the connection between a collapse-free and commutative $\H(\bq)$ and the \emph{M\"obius algebra} $A(L)$ of a finite lattice $L$. According to Stanley~\cite[\S 3.9]{EC1}, the M\"obius algebra $A(L)$ is the monoid algebra of $L$ over $\FF$ with the meet operation, and it is a direct sum of $|L|$ many one-dimensional subalgebras.

Now let $Z$ be a finite rank two poset. Set $X:=\{x\in Z: x>y \textrm{ for some }y\in Z\}$ and $Y=Z\setminus X$. By abuse of notation we denote by $Z$ the underlying graph of $Z$. Let $L$ be the distribute lattice $J(Z)$ of the order ideals of $Z$ ordered by reverse inclusion (so that the meet operation is the union of ideals). Suppose that $(W,S)$ is a Coxeter system whose Coxeter diagram coincides with $Z$. Denote by $\H(Z)$ the Hecke algebra $\H(\bq)$ of $(W,S)$ with parameters $\bq=(q_s:s\in S)$ given by $q_s=0$ for all $s\in X$ and $q_s=1$ for all $s\in Y$.

\begin{proposition}
When $\cha(\FF)\ne2$ the algebra $\H(Z)$ is isomorphic the M\"obius algebra of $J(Z)$. 
\end{proposition}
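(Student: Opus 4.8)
The plan is to show that, under the hypothesis $\cha(\FF)\neq 2$, both $\H(Z)$ and the M\"obius algebra $A(J(Z))$ are isomorphic to the split commutative algebra $\FF^{N}$ for a common integer $N$, and hence to each other. Since $Z$ has rank two, its underlying graph $G$ is bipartite with parts $X$ (the rank-one elements) and $Y$ (the rank-zero elements), every edge of $G$ records a covering relation $y\lessdot x$ with $x\in X$, $y\in Y$, and all edges are simply laced. Because each edge joins a vertex of $X$ (parameter $0$) to a vertex of $Y$ (parameter $1$), exactly one endpoint carries the parameter $0$, so by Theorem~\ref{thm:commutative} the algebra $\H(Z)$ is collapse-free and commutative; Corollary~\ref{cor:ComDim} then gives it the basis $\{T_I:I\in\II(G)\}$, whence $\dim_\FF\H(Z)=|\II(G)|$. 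On the other hand, Stanley's description of the M\"obius algebra yields $A(J(Z))\cong\FF^{|J(Z)|}$. So it remains to prove $|J(Z)|=|\II(G)|$ and $\H(Z)\cong\FF^{|\II(G)|}$.

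The dimension count comes from a bijection between order ideals of $Z$ and independent sets of $G$. Writing an ideal as $I=(I\cap X)\sqcup(I\cap Y)$, I would send $I$ to $(I\cap X)\cup(Y\setminus I)$. Down-closure of $I$ says exactly that every lower cover of an element of $I\cap X$ lies in $I\cap Y$, which is precisely the statement that $(I\cap X)\cup(Y\setminus I)$ spans no edge of $G$; the reverse map complements back on $Y$, so this is a bijection and $|J(Z)|=|\II(G)|$.

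To get $\H(Z)\cong\FF^{|\II(G)|}$ I would build a complete family of orthogonal idempotents indexed by $\II(G)$. Here $\cha(\FF)\neq 2$ is used: from $T_y^2=1$ we get orthogonal idempotents $\tfrac{1\pm T_y}{2}$ for $y\in Y$, while $T_x^2=T_x$ makes $T_x,\,1-T_x$ orthogonal idempotents for $x\in X$, and all of these commute. For an independent set $U$ with $A:=U\cap X$ and $B:=U\cap Y$, set
\[
e_U:=\prod_{x\in A}T_x\prod_{x\in X\setminus A}(1-T_x)\prod_{y\in B}\frac{1-T_y}{2}\prod_{y\in Y\setminus B}\frac{1+T_y}{2}.
\]
Each $e_U$ is idempotent, and two of them are orthogonal because differing in the $X$-part produces complementary factors $T_x$ and $1-T_x$, and differing in the $Y$-part produces $\tfrac{1\pm T_y}{2}$. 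Summing the analogous expression over all pairs $(A,B)$ with $A\subseteq X$, $B\subseteq Y$ telescopes to $\prod_{x\in X}\bigl(T_x+(1-T_x)\bigr)\prod_{y\in Y}\bigl(\tfrac{1-T_y}{2}+\tfrac{1+T_y}{2}\bigr)=1$; moreover any pair with $A\cup B$ not independent contributes $0$, since then some covering pair $y\lessdot x$ with $x\in A$, $y\in B$ gives the factor $T_x\cdot\tfrac{1-T_y}{2}=\tfrac12(T_x-T_xT_y)=0$ by the relation $T_xT_y=T_x$ of Lemma~\ref{lem:01=0}. Hence $\sum_{U\in\II(G)}e_U=1$.

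The step I expect to be the main obstacle is verifying that each $e_U$ with $U\in\II(G)$ is actually nonzero; without this the idempotents could collapse and the dimension count would fail. I would handle it representation-theoretically: for each independent $U$ define the assignment $T_x\mapsto[x\in U]$ ($x\in X$) and $T_y\mapsto(-1)^{[y\in U]}$ ($y\in Y$). The quadratic relations hold since $[x\in U]\in\{0,1\}$ and $(\pm1)^2=1$, and the only nontrivial braid relation, $T_xT_yT_x=T_yT_xT_y$ for a covering pair $y\lessdot x$ (the commuting relations for non-edges being automatic), reduces to $[x\in U](-1)^{[y\in U]}=[x\in U]$, which holds precisely because independence of $U$ forbids $x\in U$ and $y\in U$ simultaneously. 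Thus this assignment extends to an algebra character $\chi_U:\H(Z)\to\FF$, and a direct evaluation gives $\chi_U(e_U)=1$, so $e_U\neq 0$. Consequently $\{e_U:U\in\II(G)\}$ is a family of $|\II(G)|=\dim_\FF\H(Z)$ nonzero orthogonal idempotents summing to $1$, hence an $\FF$-basis decomposing $\H(Z)$ into one-dimensional ideals. Therefore $\H(Z)\cong\FF^{|\II(G)|}\cong\FF^{|J(Z)|}\cong A(J(Z))$, as desired.
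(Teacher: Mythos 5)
Your proof is correct, but it takes a genuinely different route from the paper. The paper rescales only the generators indexed by $Y$, setting $T'_y:=(T_y+1)/2$ (this is where $\cha(\FF)\ne2$ enters), observes that all defining relations keep the same form, and then writes down an explicit isomorphism sending the basis element $T'(I)=\prod_{z\in I}T'_z$, for $I\in\II(Z)$, to the order ideal $J(I)$ generated by the antichain $I$; multiplicativity is checked directly via $T'(I_1)T'(I_2)=T'(I_1\circ I_2)$, where $I_1\circ I_2$ is the set of maximal elements of $J(I_1)\cup J(I_2)$, matching the meet (union of ideals) in $J(Z)$ under reverse inclusion. You instead prove the abstract statement that both algebras are split semisimple of the same dimension: you quote Stanley for $A(J(Z))\cong\FF^{|J(Z)|}$, match dimensions via a complement-on-$Y$ variant of the standard antichain--ideal bijection, and then split $\H(Z)$ by hand with the complete orthogonal family $e_U$, using the characters $\chi_U$ to rule out vanishing. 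All of your verifications check out: the relation $T_xT_y=T_x$ from Lemma~\ref{lem:01=0} is exactly what kills the non-independent terms in the telescoping sum, the braid relation $a^2b=ab^2$ for your scalar assignments reduces precisely to the independence of $U$, and $|\II(G)|$ nonzero orthogonal idempotents in an algebra of dimension $|\II(G)|$ (Corollary~\ref{cor:ComDim}) force a decomposition into one-dimensional ideals. It is worth noting that your $e_U$ are, after the substitution $x_x=T_x$ and $x_y=(1-T_y)/2$, exactly the primitive idempotents $X_IX^-_{G-I}$ that the paper later produces in Corollary~\ref{cor:decomp}(v) for $R=\emptyset$, so you have in effect re-derived that machinery locally. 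The trade-off: the paper's isomorphism is canonical and exhibits $\H(Z)$ directly as the monoid algebra of $(J(Z),\wedge)$, with the combinatorics of antichains and ideals visible in the map itself, whereas your argument produces only a non-canonical isomorphism $\FF^N\cong\FF^N$ --- but in exchange it makes the semisimplicity and Wedderburn decomposition of $\H(Z)$ fully explicit without appealing to the later sections of the paper.
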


\begin{proof}
By definition, the algebra $\H(Z)$ is generated by $\{T_x:x\in X\}\cup\{T_y:y\in Y\}$ with relations
\[
\begin{cases}
T_x^2=T_x,\ T_y^2=1,& \forall x\in X,\ \forall y\in Y, \\
T_zT_{z'}=T_{z'}T_z, & \forall z,z'\in Z, \\ 
T_xT_y=T_x, & \textrm{if $x>y$ in $Z$ (by Lemma~\ref{lem:01=0})}.
\end{cases}
\]
One has a basis $\{T_I:I\in\II(Z)\}$ for $\H(Z)$ by Corollary~\ref{cor:ComDim}.

When $\cha(\FF)\ne2$ one can replace the generator $T_y$ with $T'_y:=(T_y+1)/2$, which is now an idempotent, for every $y\in Y$. One checks that all other relations given above remain same. Write $T'_x=T_x$ for all $x\in X$. Then the algebra $H(Z)$ is generated by $\{T'_x:x\in X\} \cup\{T'_y:y\in Y\}$ and has a basis $\{T'_I:I\in\II(Z)\}$ where $T'(I):= \prod_{z\in I} T'_z$.

Any independent set $I$ in $\II(Z)$ is an antichain in $Z$, generating an order ideal $J(I)$ consisting of all elements weakly below some element of $I$. Conversely, an order ideal of $Z$ corresponds to an independent set $I\in\II(Z)$ consisting of all maximal elements in this order ideal. Hence sending $T'(I)$ to the order ideal $J(I)$ for all $I\in\II(Z)$ gives a vector space isomorphism $H(Z) \cong A(J(Z))$. To see this isomorphism preserves multiplications, let $I_1$ and $I_2$ be two elements in $\II(Z)$. Then $T'(I_1)T'(I_2)=T'(I_1\circ I_2)$ where 
$I_1\circ I_2$ is obtained from $I_1\cup I_2$ by removing all the elements that are less than some element of $I_1\cup I_2$. On the other hand, the order ideal $J(I_1)\cup J(I_2)$ has maximal elements given by $I_1\circ I_2$, and thus equals $J(I_1\circ I_2)$. This completes the proof.
\end{proof}

\section{The commutative case}\label{sec:H(G,R)}

By Theorem~\ref{thm:commutative} and Corollary~\ref{cor:ComDim}, if $\H(\bq)$ is collapse-free and commutative, then the Coxeter diagram of $(W,S)$ is simply laced with a bipartite underlying graph $G$, and the dimension of $\H(\bq)$ is $|\II(G)|$. In this section we define and study a more general commutative algebra for any (unweighted) simple graph $G$, whose dimension is still $|\II(G)|$.

\subsection{Basic results}
Let $G$ be a simple graph with vertex set $V(G)$ and edge set $E(G)$, and let $R\subseteq V(G)$. We define an algebra $\H(G,R)$ to be the quotient of the polynomial algebra $\FF[x_v:v\in V(G)]$ by the ideal generated by
\[%\begin{equation}\label{eq:HGR}
\{x_r^2: r\in R\} \cup
\{x_v^2-x_v: v\in V(G)\setminus R\} \cup
\{x_ux_v: uv\in E(G)\}.
\]%\end{equation}
The image of $x_v$ in the quotient algebra $\H(G,R)$ is still denoted by $x_v$ for all $v\in V$. This algebra $\H(G,R)$ generalizes the commutative algebra $\H(\bq)$ by the following result.

\begin{proposition}\label{prop:HGR}
If $\H(\bq)$ is collapse-free and commutative then it is isomorphic to $\H(G,R)$ as an algebra, where $G$ is the underlying graph of the Coxeter diagram of $(W,S)$ and $R:=\{s\in S:q_s=-1\}$.
\end{proposition}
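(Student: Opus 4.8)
The plan is to construct an explicit algebra homomorphism $\varphi\colon\H(G,R)\to\H(\bq)$ and then show it is bijective by a dimension count. By Theorem~\ref{thm:commutative} the graph $G$ is bipartite with parts $V_0:=\{s\in S:q_s=0\}$ and $V_1:=\{s\in S:q_s\ne0\}$, every edge joins $V_0$ to $V_1$, and $R=\{s\in S:q_s=-1\}\subseteq V_1$. The guiding idea is that the quadratic relation $(T_s-1)(T_s+q_s)=0$ makes $\H(\bq)$ contain the two complementary spectral idempotents of $T_s$ whenever $q_s\ne-1$, and that the edge relation of $\H(G,R)$ must be assembled from the \emph{correct} idempotent on each side of the bipartition.

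Concretely, I would set
\[
\varphi(x_s):=\begin{cases} T_s, & s\in V_0,\\ \dfrac{1-T_s}{1+q_s}, & s\in V_1\setminus R,\\ T_s-1, & s\in R.\end{cases}
\]
Because $\H(\bq)$ is commutative, the universal property of the polynomial algebra reduces the well-definedness of $\varphi$ to checking that it kills the generators of the defining ideal of $\H(G,R)$. Using $T_s^2=(1-q_s)T_s+q_s$ one verifies that $\varphi(x_s)$ is idempotent for $s\notin R$ and squares to zero for $s\in R$; note that $1+q_s=0$ precisely when $s\in R$, so the middle formula is legitimate in every characteristic and no hypothesis on $\cha(\FF)$ is required.

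The crucial relations are the edge relations. For an edge $st$ with $s\in V_0$ and $t\in V_1$, Lemma~\ref{lem:01=0} gives $T_sT_t=T_s$, that is $T_s(1-T_t)=0$. Since $\varphi(x_s)=T_s$ while $\varphi(x_t)$ is a scalar multiple of $1-T_t$ in both cases $t\in V_1\setminus R$ and $t\in R$, we get $\varphi(x_s)\varphi(x_t)=0$. This is exactly the point where the asymmetric choice is forced: using the same idempotent formula on both sides would leave the product nonzero, so one genuinely needs $T_s$ on the $V_0$ side and a multiple of $1-T_t$ on the $V_1$ side.

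It remains to see that $\varphi$ is an isomorphism. Inverting the three formulas shows each $T_s$ lies in the image (namely $T_s=\varphi(x_s)$, $T_s=1-(1+q_s)\varphi(x_s)$, or $T_s=1+\varphi(x_s)$), so $\varphi$ is surjective. Conversely the defining relations reduce any monomial in the $x_v$ to a square-free product $\prod_{v\in I}x_v$ with $I\in\II(G)$, giving $\dim_\FF\H(G,R)\le|\II(G)|$. As $\dim_\FF\H(\bq)=|\II(G)|$ by Corollary~\ref{cor:ComDim}, surjectivity yields the reverse inequality, so the dimensions agree and $\varphi$ is an isomorphism. I expect the main difficulty to be pinning down this case analysis correctly, especially recognizing that the two sides of the bipartition require complementary idempotents; the algebraic verifications and the concluding dimension argument are then routine.
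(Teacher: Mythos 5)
Your proposal is correct and follows essentially the same route as the paper: the paper's proof uses verbatim the same change of generators $x_s=T_s$, $x_s=T_s-1$, or $x_s=(1-T_s)/(1+q_s)$ according to whether $q_s=0$, $q_s=-1$, or otherwise, and likewise invokes Lemma~\ref{lem:01=0} to handle the edge relations. Where the paper merely asserts that ``one can check'' the two sets of relations are equivalent, you supply the verification explicitly and settle bijectivity by the dimension count $\dim\H(G,R)\le|\II(G)|=\dim\H(\bq)$ via Corollary~\ref{cor:ComDim}, which is a sound and complete way to fill in that step.
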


\begin{proof}
The algebra $\H(\bq)$ has another generating set $\{x_s:s\in S\}$ given by 
\[
x_s:=
\begin{cases}
T_s, & q_s=0,\\
T_s-1, & q_s=-1, \\
(1-T_s)/(1+q_s),  & {\rm  otherwise}. %(T_s-1)/(1+q_s)
\end{cases}
\]
If $\H(\bq)$ is collapse-free and commutative then one can check that the relations for $\{T_s:s\in S\}$ are equivalent to the relations for $\{x_s:s\in S\}$ in the definition of $\H(G,R)$ using Lemma~\ref{lem:01=0}. Thus the result holds.
\end{proof}

\begin{remark}
(i) The set $R=\{s\in S:q_s=-1\}$ associated with $\H(\bq)$ depends on $\cha(\FF)$. For example, an element $s\in S$ with $q_s=1$ belongs to $R$ if and only if $\cha(\FF)=2$. However, once $R$ is chosen for the algebra $\H(G,R)$, our results on $\H(G,R)$ do not depend on $\cha(\FF)$ any more.

(ii) By Theorem~\ref{thm:commutative}, if $\H(\bq)$ is collapse-free and commutative then $R=\{s\in S:q_s=-1\}$ must be an independent set of $G$. But the commutative algebra $\H(G,R)$ is well defined for any simple graph $G$ and any subset $R\subseteq V(G)$. %Therefore the family of commutative algebras $\H(G,R)$ strictly contains the family of commutative algebras $\H(\bq)$.

(iii) The \emph{Stanley-Reisner ring of the independence complex of $G$} is defined as the quotient of the polynomial algebra $\FF[y_v:v\in V(G)]$ by the \emph{edge ideal} generated by $(y_uy_v:uv\in E(G))$. See e.g. \cite{CookNagel}. The algebra $\H(G,R)$ is a further quotient of the Stanley-Reisner ring of the independence complex of $G$.
\end{remark}

Now we study the algebra $\H(G,R)$ and our results will naturally apply to the commutative algebra $\H(\bq)$ by Proposition~\ref{prop:HGR}. We first need some notation. For any $U\subseteq V(G)$ we write
\[
X_U:=\prod_{u\in U} x_u\quad {\rm and}\quad X^-_U:=\prod_{u\in U} x^-_u
\]
where $x^-_v:=1-x_v$ for all $v\in V(G)$. One sees that $X_U\ne0$ if and only if $U$ belongs to $\mathcal I(G)$, the set of all independent sets in $G$. We define the \emph{length} of a nonzero monomial $X_I$ to be the cardinality $|I|$ of the independent set $I$. We partially order the nonzero monomials by their lengths. We denote by $N(U)$ the set of all vertices that are adjacent to some vertex $u\in U$ in $G$. We will often identify a subset $U$ of $V(G)$ with the subgraph of $G$ induced by $U$, whose vertex set is $U$ and whose edge set is $\{\{u,v\}\in E(G):u,v\in U\}$. We will also write ``$+$'' and ''$-$'' for set union and difference. For example, we write $G-R$ for the subgraph of $G$ induced by $V(G)-R$, and hence $\II(G-R)$ consists of all independent sets of $G-R$. We give two bases for $\H(G,R)$ in the following proposition, which generalizes Corollary~\ref{cor:ComDim}.

\begin{proposition}\label{prop:basisG}
The algebra $\H(G,R)$ has dimension $|\mathcal I(G)|$ and two bases $\{X_I:I\in\mathcal I(G)\}$ and 
\begin{equation}\label{eq:basisG}
\left\{ X_{I+ J} X^-_{G-R-I}: I\in\II(G-R),\ J\in\II(R-N(I)) \right\}.
\end{equation}
\end{proposition}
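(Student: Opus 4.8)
The plan is to handle the two bases in turn and to read off the dimension as a by-product of the first. First I would note that the three families of relations give a rewriting procedure on monomials: the relation $x_ux_v$ for $uv\in E(G)$ annihilates any monomial whose support contains an edge, while $x_v^2=x_v$ (for $v\notin R$) and $x_r^2=0$ (for $r\in R$) reduce every surviving variable to exponent $0$ or $1$. Hence an arbitrary monomial reduces either to $0$ or to $X_I$ for a unique $I\in\II(G)$, so $\{X_I:I\in\II(G)\}$ spans $\H(G,R)$. It then remains to prove these spanning elements are linearly independent; granting that, $\dim\H(G,R)=|\II(G)|$ is immediate.

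To prove independence I would construct an explicit $\H(G,R)$-module on $M:=\bigoplus_{I\in\II(G)}\FF e_I$ and show that the $X_I$ act as pairwise distinct operators. For $v\in V(G)\setminus R$ set
\[
x_v\cdot e_I=\begin{cases} e_I,& v\in I,\\ e_{I+v},& v\notin I\cup N(I),\\ 0,& v\in N(I),\end{cases}
\]
and for $r\in R$ use the same formula except that $x_r\cdot e_I=0$ also when $r\in I$, making $x_r$ nilpotent. A routine verification shows each defining relation holds as an operator identity: $x_v^2=x_v$ and $x_r^2=0$ come from the $v\in I$ (resp.\ $r\in I$) cases; $x_ux_v=0$ for $uv\in E(G)$ holds because when $x_v\cdot e_I\ne0$ its support contains $v$, and $u\in N(v)$ then annihilates it; and $x_u,x_v$ commute for non-edges because adding $v$ never affects whether $u$ can be added. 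Thus $M$ is an $\H(G,R)$-module. Applying $\prod_{v\in I}x_v$ to $e_\emptyset$ in any order adds the vertices of $I$ one at a time (each step is legal as $I$ is independent), giving $X_I\cdot e_\emptyset=e_I$. As the $e_I$ are distinct basis vectors, the operators $X_I$, and hence the elements $X_I\in\H(G,R)$, are linearly independent. This settles the first basis and the dimension.

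For the second family I would first record the bijection sending $K\in\II(G)$ to $(K\cap(V(G)\setminus R),\,K\cap R)$, with inverse $(I,J)\mapsto I+J$; here $I+J$ is independent precisely because $J\subseteq R-N(I)$, and this is exactly the index set of the proposed family, so it has cardinality $|\II(G)|$. Expanding $X^-_{G-R-I}=\sum_{A\subseteq V(G)\setminus(R\cup I)}(-1)^{|A|}X_A$ and multiplying yields
\[
X_{I+J}\,X^-_{G-R-I}=\sum_{A\subseteq V(G)\setminus(R\cup I)}(-1)^{|A|}X_{I+J+A},
\]
where a term vanishes unless $I+J+A\in\II(G)$. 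The unique shortest term (at $A=\emptyset$) is $X_{I+J}$, and every other term is an $X_K$ with $K\supsetneq I+J$. Ordering $\II(G)$ by a linear extension of inclusion, the transition matrix from $\{X_{I+J}X^-_{G-R-I}\}$ to the basis $\{X_K\}$ is then unitriangular (diagonal entries $1$, off-diagonal entries supported on strictly larger sets), hence invertible, so the second family is also a basis.

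The genuine work is concentrated in the second paragraph: the module $M$ must be checked to respect all relations simultaneously, including commutativity of the generators, and the case analysis separating $R$ from $V(G)\setminus R$ must be carried out with care. Everything else---the spanning reduction, the counting bijection, and the triangular change of basis---is formal. As an alternative route to independence I could instead pass to the associated graded algebra, which is the Stanley--Reisner-type monomial quotient $\FF[x_v]/(x_v^2,\,x_ux_v:uv\in E(G))$ whose standard-monomial basis is visibly $\{X_I:I\in\II(G)\}$; but the explicit module is more self-contained and is reused in the representation theory developed later.
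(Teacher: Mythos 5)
Your proposal is correct and follows essentially the same route as the paper: the spanning argument by rewriting, the module on $\bigoplus_{I\in\II(G)}\FF e_I$ (whose action coincides exactly with the paper's action of $x_v$ on $\FF\II(G)$, with $X_I(e_\emptyset)=e_I$ forcing independence), and the unitriangular change of basis via the shortest term $X_{I+J}$ for the second family. The only cosmetic difference is that you order by inclusion while the paper orders monomials by length, and you spell out the relation-checking and the bijection $(I,J)\mapsto I+J$ in more detail than the paper does.
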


\begin{proof}
The defining relations for $\H(G,R)$ immediately imply that it is spanned by $\{X_I:I\subseteq I(G)\}$. Let $\FF \mathcal I(G)$ be the vector space over $\FF$ with a basis $\mathcal I(G)$. We define an action of $\H(G,R)$ on $\FF \II(G)$ by 
\[
x_v(I)=
\begin{cases}
0, & {\rm if}\ v\in I\cap R\ {\rm or}\ I\cup\{v\}\notin\mathcal I(G),\\
I\cup\{v\}, & {\rm otherwise.}
\end{cases}
\]
It is not hard to check that this action satisfies the defining relations for $\H(G,R)$. For any $I\in\mathcal I(G)$, one has $X_I(\emptyset)=I$. This forces the spanning set $\{X_I:I\subseteq I(G)\}$ to be a basis for $\H(G,R)$.

One sees that any independent set of $G$ can be written uniquely as $I+J$ for some $I\in\II(G-R)$ and $J\in \II(R-N(I))$, and the shortest term in $X_{I+J} X^-_{G-R-I}$ is $X_{I+ J}$. Thus (\ref{eq:basisG}) is also a basis for $\H(G)$.
\end{proof}

Let $G'$ be a subgraph of $G$ induced by $V'\subseteq V(G)$, and let $R'=V'\cap R$. The following corollary allows us to study the induction of $\H(G',R')$-modules to $\H(G,R)$ and the restriction of $\H(G,R)$-modules to $\H(G',R')$.

\begin{corollary}\label{cor:EmbedAlgebra}
The subalgebra of $\H(G,R)$ generated by $\{x_v:v\in V'\}$ is isomorphic to $\H(G',R')$.
\end{corollary}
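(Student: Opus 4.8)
The plan is to exhibit an explicit algebra map in each direction and check they are mutually inverse, leveraging the basis result of Proposition~\ref{prop:basisG}. First I would set up the natural comparison. By definition $\H(G',R')$ is the quotient of $\FF[x_v : v\in V']$ by the ideal generated by $\{x_r^2 : r\in R'\}\cup\{x_v^2-x_v : v\in V'\setminus R'\}\cup\{x_ux_v : uv\in E(G')\}$. Since $V'\subseteq V(G)$ and $R'=V'\cap R$, every generator $x_v$ with $v\in V'$ lives inside $\H(G,R)$, and every defining relation of $\H(G',R')$ is already a relation in $\H(G,R)$: the quadratic relation for $x_v$ matches because $v\in R'$ iff $v\in R$, and each edge relation $x_ux_v$ for $uv\in E(G')$ holds because $E(G')\subseteq E(G)$. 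Hence sending $x_v\mapsto x_v$ defines an algebra homomorphism $\phi\colon \H(G',R')\to \H(G,R)$ whose image is exactly the subalgebra generated by $\{x_v : v\in V'\}$.

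The main work is to show $\phi$ is injective. Here I would use the monomial basis from Proposition~\ref{prop:basisG}: $\H(G',R')$ has basis $\{X_I : I\in\II(G')\}$ and $\H(G,R)$ has basis $\{X_J : J\in\II(G)\}$. The map $\phi$ sends the basis element $X_I$ (a product $\prod_{v\in I} x_v$ over $V'$) to the monomial $X_I$ computed in $\H(G,R)$. The key observation is that an independent set of $G'$ is precisely a subset $I\subseteq V'$ with no edge of $G$ inside it, i.e. $\II(G')=\{I\in\II(G) : I\subseteq V'\}$. Therefore $\phi$ carries the basis $\{X_I : I\in\II(G')\}$ of the domain injectively into the basis $\{X_J : J\in\II(G)\}$ of the codomain, with distinct basis elements going to distinct basis elements. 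A linear map sending a basis into a linearly independent set is injective, so $\phi$ is injective.

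Combining the two observations, $\phi$ is an algebra isomorphism onto its image, which is the subalgebra of $\H(G,R)$ generated by $\{x_v : v\in V'\}$. This is exactly the claim. The step I expect to require the most care is the identification $\II(G')=\{I\in\II(G):I\subseteq V'\}$ and the accompanying verification that $\phi$ respects the monomial bases on the nose rather than up to lower-order terms; because the bases here are honest monomials $X_I$ with no lower-order corrections (unlike the triangular basis (\ref{eq:basisG})), the bookkeeping is clean and the injectivity is immediate once the index sets are matched. The only subtlety to state explicitly is that a monomial $X_I$ in $\H(G,R)$ is nonzero exactly when $I\in\II(G)$, so no basis element of $\H(G',R')$ collapses to zero under $\phi$.
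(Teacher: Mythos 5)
Your proof is correct and follows essentially the same route as the paper: both define the natural map $x_v\mapsto x_v$ and use the monomial bases $\{X_I : I\in\II(G')\}$ and $\{X_J : J\in\II(G)\}$ from Proposition~\ref{prop:basisG}, with the inducedness of $G'$ guaranteeing $\II(G')=\{I\in\II(G):I\subseteq V'\}$ so that the domain's basis maps onto a linearly independent set spanning the subalgebra. Your write-up merely spells out the relation-checking and injectivity steps that the paper leaves implicit.
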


\begin{proof}
There is an injection $\phi: \H(G',R')\hookrightarrow \H(G,R)$ of algebras defined by sending the generators $x'_v$ for $\H(G',R')$ to the generators $x_v$ for $\H(G,R)$ for all $v\in V'$. By Proposition~\ref{prop:basisG}, the algebra $\H(G',R')$ admits a basis consisting of the elements $X'_I:=\prod_{v\in I} x'_v$ for all $I\in\II(G')$. The map $\phi$ sends this basis to the basis $\{X_I:I\in\II(G')\}$ for the subalgebra of $\H(G,R)$ generated by $\{x_v:v\in V'\}$, giving the desired isomorphism.
\end{proof}

\subsection{Projective indecomposable modules and simple modules}
We first decompose the algebra $\H(G,R)$ into a direct sum of indecomposable submodules.

%For any $U\subseteq V(G)$, denote by $\langle x_u:u\in U \rangle$ the subalgebra of $\H(G,R)$ generated by $\{x_u:u\in U\}$. 

\begin{theorem}\label{thm:decomp}
There is an $\H(G,R)$-module decomposition 
\begin{equation}\label{eq:H(G,R)}
\H(G,R) = \bigoplus_{I\subseteq \II(G-R)}\P_I(G,R)
\end{equation}
where each $\P_I(G,R):=\H(G,R)X_IX^-_{G-R-I}$ is an indecomposable $\H(G,R)$-module with a basis 
\begin{equation}\label{eq:BasisP}
\left\{ X_{I+J} X^-_{G-R-I}: J\in \II(R-N(I)) \right\}
\end{equation}
and hence has dimension $|\II(R-N(I))|$. The top of $\P_I(G,R)$, denoted by $\C_I(G,R)$, is one-dimensional and admits an $\H(G,R)$-action by
\[
x_v = 
\begin{cases}
1, & {\rm if}\ v\in I, \\
0, & {\rm if}\ v\in G-I.
\end{cases}
\]
\end{theorem}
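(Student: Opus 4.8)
The plan is to prove the three assertions — the module decomposition, the description of each $\P_I(G,R)$, and the one-dimensionality of its top — in sequence, relying on the basis from Proposition~\ref{prop:basisG} as the central bookkeeping device. First I would verify that the elements $e_I:=X_IX^-_{G-R-I}$ for $I\in\II(G-R)$ form a complete set of orthogonal idempotents summing to $1$. Indeed, using $x_v^2=x_v$ for $v\notin R$ and $x_v^-=1-x_v$, one checks $e_I^2=e_I$, and $e_Ie_{I'}=0$ for $I\ne I'$ in $\II(G-R)$ because the two products force some factor $x_v x_v^-=x_v-x_v^2=0$ (when the symmetric difference of $I,I'$ is nonempty in $G-R$). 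The relation $\sum_{I\in\II(G-R)}e_I=1$ follows by expanding $\prod_{v\in G-R}(x_v+x_v^-)=1$ and collecting the surviving (independent) terms. This immediately yields the direct-sum decomposition $\H(G,R)=\bigoplus_I \H(G,R)e_I=\bigoplus_I\P_I(G,R)$ as left $\H(G,R)$-modules.

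Next I would pin down the basis~\eqref{eq:BasisP} and hence the dimension of $\P_I(G,R)$. Starting from the basis of~\eqref{eq:basisG} in Proposition~\ref{prop:basisG}, I would show that right-multiplying (equivalently, since the algebra is commutative, just multiplying) the generic basis element $X_{I'+J}X^-_{G-R-I'}$ by $e_I$ kills it unless $I'=I$, using the same $x_vx_v^-=0$ cancellation; thus $\P_I(G,R)=\H(G,R)e_I$ is spanned by exactly $\{X_{I+J}X^-_{G-R-I}:J\in\II(R-N(I))\}$. Since these are part of the basis~\eqref{eq:basisG}, they are linearly independent, giving both the basis~\eqref{eq:BasisP} and the dimension $|\II(R-N(I))|$.

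For the top, I would identify $\rad(\P_I(G,R))$ as the span of those basis elements with $J\ne\emptyset$, i.e. the elements $X_{I+J}X^-_{G-R-I}$ of length strictly greater than $|I|$. The natural argument is that every generator $x_v$ acts nilpotently on the length-graded quotient: multiplication by any $x_v$ strictly raises length or annihilates, so the ideal generated by all $x_v$ (the augmentation-type ideal) acts by strictly raising length within $\P_I(G,R)$, forcing the span of the $J\ne\emptyset$ part to be a nilpotent submodule whose quotient is one-dimensional, spanned by $e_I=X_IX^-_{G-R-I}$. To read off the stated action on $\C_I(G,R)=\P_I/\rad$, I would compute $x_v\cdot e_I$ modulo the higher-length part: for $v\in I$ we get $x_v e_I=e_I$ (using $x_v^2=x_v$), for $v\in G-R-I$ we get $x_ve_I=x_v(1-x_v)\cdots=0$ outright (annihilated by the $x_v^-$ factor), and for $v\in R$ or $v\in N(I)$ the product either lands in higher length or is zero by the edge relations — in every case $x_v$ acts as $0$ on the quotient for $v\notin I$. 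This matches the claimed formula.

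The main obstacle I anticipate is the indecomposability of $\P_I(G,R)$, since orthogonal idempotents alone only give a decomposition into summands that need not be indecomposable a priori. The cleanest route is to argue that each $e_I$ is a \emph{primitive} idempotent, equivalently that $\P_I(G,R)$ has a one-dimensional top and is therefore local (its endomorphism ring has no nontrivial idempotents). Concretely, once $\rad(\P_I)$ is shown to have codimension one as above, the top is simple, so $\P_I$ cannot split as a direct sum of two nonzero submodules (a nontrivial decomposition would force a top of dimension at least two); hence $\P_I(G,R)$ is indecomposable. I would take care to justify that the length-graded filtration is genuinely a filtration by submodules and that the radical is exactly the codimension-one piece — the inclusion $\rad(\P_I)\subseteq\langle\text{length}>|I|\rangle$ is automatic, and the reverse follows because the larger-length span is a nilpotent ideal-module with semisimple (indeed one-dimensional) quotient.
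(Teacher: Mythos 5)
Your proof is correct and, in substance, matches the paper's: both rest on the basis of Proposition~\ref{prop:basisG}, identify $\rad\P_I(G,R)$ with the span $\mathbf N_I$ of the basis elements \eqref{eq:BasisP} having $J\ne\emptyset$, and deduce indecomposability from the simplicity of the one-dimensional top. The one genuine difference is the first step: the paper derives the decomposition \eqref{eq:H(G,R)} by computing $X_J\,(X_IX^-_{G-R-I})$ directly, showing each set \eqref{eq:BasisP} spans $\P_I(G,R)$, and observing that the union of these spanning sets is precisely the basis \eqref{eq:basisG}, which yields the direct sum and the linear independence simultaneously; you instead verify up front that the elements $e_I:=X_IX^-_{G-R-I}$ form a complete set of orthogonal idempotents summing to $1$ (via expanding $\prod_{v\in G-R}(x_v+x_v^-)=1$), so that $\H(G,R)=\bigoplus_I\H(G,R)e_I$ is formal. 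The paper only records this idempotent identity afterwards, as Corollary~\ref{cor:decomp}(v), deduced \emph{from} the theorem; your route reverses that logical order, which is valid and arguably cleaner, at the mild cost of a second pass through the basis \eqref{eq:basisG} to cut it down by multiplication with $e_I$. One caveat: your sentence asserting that ``multiplication by any $x_v$ strictly raises length or annihilates'' is false for $v\in I$, where $x_v$ acts as the identity on all of $\P_I(G,R)$ (as your own case analysis of the action on the top then correctly shows). The nilpotency of $\mathbf N_I$ should instead be attributed, as in the paper, to $x_r^2=0$ for $r\in R$: each element of \eqref{eq:BasisP} with $J\ne\emptyset$ squares to zero, and a product of two such elements either vanishes or strictly increases $|J|$, so $\mathbf N_I$ is a nil, hence nilpotent, ideal. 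With that repair your two inclusions go through exactly as in the paper: $\rad\P_I(G,R)\subseteq\mathbf N_I$ because $\mathbf N_I$ is a maximal (codimension-one) submodule, and $\mathbf N_I\subseteq\rad\H(G,R)\cap\P_I(G,R)=\rad\P_I(G,R)$ because nilpotent ideals lie in the radical, giving the stated top and the indecomposability.
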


\begin{proof}
Let $I\in\II(G-R)$. Since $x_vx^-_v=0$ for any $v\in G-R-I$, and $x_ux_v=0$ whenever $v\in I$ and $u\in N(v)$, one has
\begin{equation}\label{eq:HP}
X_J( X_I X^-_{G-R-I} ) = 
\begin{cases}
 X_{I+J} X^-_{G-R-I}, & {\rm if}\ J-I\in \II(R-N(I)), \\
 0, & {\rm otherwise}
\end{cases}
\end{equation}
for any $J\in\II(G)$. Hence (\ref{eq:BasisP}) spans $\P_I(G,R)$. By Proposition~\ref{prop:basisG}, $\H(G,R)$ has a basis (\ref{eq:basisG}) which is the union of the spanning sets (\ref{eq:BasisP}) for all $I\in\II(G-R)$. This implies the direct sum decomposition (\ref{eq:H(G,R)}) of $\H(G,R)$ and forces the spanning set (\ref{eq:BasisP}) to be a basis for $\P_I(G,R)$. The dimension of $\P_I(G,R)$ is then clear. 

Now we prove that $\P_I(G,R)$ is indecomposable and find its top. Since $x_r^2=0$ for any $r\in R$, the elements in (\ref{eq:BasisP}) are all nilpotent except $X_IX^-_{G-R-I}$. The span $\mathbf N_I$ of these nilpotent elements is contained in the nilradical of $\H(G,R)$, and hence in the radical of $\P_I(G,R)$. By (\ref{eq:HP}), the quotient $\P_I(G,R)/\mathbf N_I$ is isomorphic to the one-dimensional $\H(G,R)$-module $\C_I(G,R)$. It follows that the radical of $\P_I(G,R)$ equals $\mathbf N_I$, and the top of $\P_I(G,R)$ is isomorphic to $\C_I(G,R)$. Then $\P_I(G,R)$ must be indecomposable as its top is simple.
\end{proof}

By Theorem~\ref{thm:decomp}, $\{\P_I(G,R):I\in\II(G-R)\}$ and $\{\C_I(G,R):I\in\II(G-R)\}$ are complete lists of pairwise-nonisomorphic projective indecomposable $\H(G,R)$-modules and simple $\H(G,R)$-modules, respectively. The proof of Theorem~\ref{thm:decomp} shows that the radical of $\P_I(G,R)$ is spanned by $\{X_{I+J}X^-_{G-R-I}: \emptyset\ne J\in\II(R-N(I))\}$ and hence the radical of $\H(G,R)$ is the ideal generated by $\{x_r:r\in R\}$. This ideal coincides with the nilradical of $\H(G,R)$, showing that $\H(G,R)$ is a \emph{Jacobson ring}. Some other consequences of Theorem~\ref{thm:decomp} are listed below.

\begin{corollary}\label{cor:decomp}
Theorem~\ref{thm:decomp} implies the following results.

\noindent(i) 
The algebra $\H(G,R)$ is semisimple if and only if $R=\emptyset$.

\noindent(ii) 
For any $I\in\II(G-R)$ one has $\P_I(G,R)\cong \H(G,R)\otimes_{\H(G-R,\emptyset)} \C_I(G-R,\emptyset)$.

\noindent(iii) 
The socle of $\P_I(G,R)$ is the direct sum of $\FF X_{I+J}X^-_{G-R-I}\cong\C_I(G,R)$ for all maximal $J$ in $\II(R-N(I))$.

\noindent(iv) 
The Cartan matrix of $\H(G,R)$ is the diagonal matrix ${\rm diag} \left\{ |\II(R-N(I))|: I\in\II(G-R) \right\}$.

\noindent(v) 
A complete set of primitive orthogonal idempotents of $H(G)$ is given by $\{X_IX^-_{G-R-I} : I\in\mathcal I(G-R)\}$.
\end{corollary}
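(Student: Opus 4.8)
The plan is to derive all five items as direct consequences of the structural information already packaged in Theorem~\ref{thm:decomp}, using the general representation-theoretic dictionary recalled in Section~\ref{sec:pre}. The theorem hands me: a decomposition $\H(G,R)=\bigoplus_{I\in\II(G-R)}\P_I(G,R)$ into indecomposables; the dimension $|\II(R-N(I))|$ of each summand; the identification of $\rad\P_I(G,R)=\mathbf N_I$ as the span of the genuinely nilpotent basis elements; and the one-dimensional top $\C_I(G,R)$ with its explicit action. Almost every claim below is a short deduction from these facts, so I would present them as brief paragraphs keyed to (i)--(v) rather than as one long argument.

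For (i), I would observe that $\H(G,R)$ is semisimple exactly when each projective indecomposable $\P_I(G,R)$ is itself simple, i.e. when $\rad\P_I(G,R)=\mathbf N_I=0$ for every $I$. Since $\mathbf N_I$ is the span of $X_{I+J}X^-_{G-R-I}$ over nonempty $J\in\II(R-N(I))$, it vanishes for all $I$ precisely when $R=\emptyset$ (taking $I=\emptyset$ forces $\II(R)$ to have no nonempty member). For (iv) I would note that $a_{IJ}$ counts the multiplicity of $\C_J(G,R)$ as a composition factor of $\P_I(G,R)$; but every basis element $X_{I+J}X^-_{G-R-I}$ in (\ref{eq:BasisP}) acts on $\C_{I'}$ nontrivially only matching $I'=I$ (the top and every radical layer are copies of $\C_I$, since the action of $x_v$ on each such element only records membership in $I$), so the Cartan matrix is diagonal with entries $\dim\P_I(G,R)=|\II(R-N(I))|$.

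For (ii) I would use Corollary~\ref{cor:EmbedAlgebra}, which realizes $\H(G-R,\emptyset)$ as the subalgebra generated by $\{x_v:v\in V(G-R)\}$; since $R\cap(G-R)=\emptyset$, the module $\C_I(G-R,\emptyset)$ is just the one-dimensional module on which $x_v$ acts by $1$ if $v\in I$ and by $0$ otherwise, and the induced module $\H(G,R)\otimes_{\H(G-R,\emptyset)}\C_I(G-R,\emptyset)$ has a basis $X_J\otimes 1$ indexed by $J\in\II(R-N(I))$ by the basis from Proposition~\ref{prop:basisG}; mapping $X_J\otimes 1\mapsto X_{I+J}X^-_{G-R-I}$ is then an isomorphism onto $\P_I(G,R)$ by (\ref{eq:HP}). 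For (iii), the socle is the largest semisimple submodule, so I would identify which basis elements $X_{I+J}X^-_{G-R-I}$ are annihilated by $\rad\H(G,R)=(x_r:r\in R)$: multiplying by any $x_r$ sends such an element to $X_{I+J+\{r\}}X^-_{G-R-I}$, which is zero exactly when $J$ is maximal in $\II(R-N(I))$, so the socle is spanned by those maximal-$J$ elements, each a copy of $\C_I(G,R)$. Finally for (v), I would verify directly that each $e_I:=X_IX^-_{G-R-I}$ is idempotent (expanding $X^-$ and using $x_v^2=x_v$ on $V(G)-R$), that $e_Ie_{I'}=0$ for $I\ne I'$ in $\II(G-R)$ (some factor $x_v x^-_v=0$ or an edge relation kills the product), that $\sum_I e_I=1$ (telescoping the product $\prod_{v\in G-R}(x_v+x^-_v)=1$), and that each $e_I$ is primitive because $e_I\H(G,R)e_I$ is local—its top being the one-dimensional $\C_I(G,R)$. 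The main obstacle is bookkeeping rather than conceptual: the only delicate point is (v), where I must carefully confirm orthogonality and completeness of the idempotents across the graph-theoretic combinatorics, and primitivity via the local endomorphism ring, but all of this follows mechanically from the bases already in hand.
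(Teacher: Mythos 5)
Your proposal is correct, and on items (i), (ii) and (iv) it runs essentially along the paper's own lines: semisimplicity is detected by the vanishing of the radical, which is generated by $\{x_r:r\in R\}$; the induction in (ii) is realized by the map sending $X_J\otimes z$ to $X_JX_IX^-_{G-R-I}$, a surjection carrying a spanning set of the tensor product onto the basis (\ref{eq:BasisP}) of $\P_I(G,R)$ (one caveat of wording: you call $\{X_J\otimes 1: J\in\II(R-N(I))\}$ a \emph{basis} of the induced module before its linear independence is available --- independence is exactly what the comparison with $\P_I(G,R)$ buys, so this set should enter the argument as a spanning set, as in the paper); and (iv) uses the same length filtration of $\P_I(G,R)$ under which every layer is a copy of $\C_I(G,R)$, giving the diagonal Cartan matrix.

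You genuinely diverge on (iii) and (v). For (iii), the paper does not invoke the identity $\mathrm{soc}(M)=\{z:\rad(\H(G,R))z=0\}$; instead it takes a simple submodule, expands a general element $z=\sum_J c_J X_{I+J}X^-_{G-R-I}$, chooses $K$ \emph{minimal} with $c_K\ne 0$, and shows $x_rz\notin\FF z$ if $K$ is not maximal. Your annihilator-of-the-radical route is cleaner, but as written you only test basis vectors; to conclude that the annihilator is spanned by the maximal-$J$ basis elements you must also rule out cancellation in a general linear combination, which holds because $J\mapsto J\cup\{r\}$ is injective, so the surviving images $X_{I+J+r}X^-_{G-R-I}$ for distinct $J$ are distinct basis elements. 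That one line is precisely what the paper's minimal-$K$ argument supplies; add it and (iii) is complete. For (v), the paper is terse: it cites the decomposition of Theorem~\ref{thm:decomp} together with $\sum_{I\in\II(G-R)}X_IX^-_{G-R-I}=1$, proved by inclusion--exclusion. Your telescoping of $\prod_{v\in G-R}(x_v+x^-_v)=1$ is a slicker derivation of the same identity (subsets that are not independent contribute $X_I=0$), and your primitivity argument via locality of $e_I\H(G,R)e_I$ is equivalent to the paper's implicit appeal to the indecomposability of $\P_I(G,R)=\H(G,R)e_I$ already established in Theorem~\ref{thm:decomp}. With the one-line repair in (iii), everything checks out.
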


\begin{proof}
(i) An algebra is semisimple if and only if its radical is 0. The radical of $\H(G,R)$ is generated by $\{x_r:r\in R\}$, which is 0 if and only if $R=\emptyset$.

(ii) There is a bilinear map 
$
\H(G,R)\times\C_I(G-R,\emptyset)\to  \P_I(G,R)
$
defined by sending $(X_J,z_I)$ to $X_J X_I X^-_{G-R-I}$ for all $J\in \II(G)$, where $z_I$ is an element spanning $\C_I(G-R,\emptyset)$. This induces an algebra surjection 
\[
\phi:\H(G,R)\otimes_{\H(G-R,\emptyset)} \C_I(G-R,\emptyset)\twoheadrightarrow \P_I(G,R)
\]
which sends $X_J\otimes_{\H(G-R,\emptyset)} z_I$ to $X_J X_I X^-_{G-R-I}$ for all $J\in\II(G)$. One sees that $\H(G,R)\otimes_{\H(G-R,\emptyset)}\C_I(G-R,\emptyset)$ is spanned by $\{X_J\otimes_{\H(G-R,\emptyset)} z_I:J\in\II(R-N(I))\}$, which is sent by $\phi$ to the basis (\ref{eq:BasisP}) for $\P_I(G,R)$. Hence $\phi$ must be an isomorphism.

(iii) If $J$ is maximal in $\II(R-N(I))$ then $\FF X_{I+J} X^-_{G-R-I}$ admits the same action of $\H(G,R)$ as $\C_I(G,R)$. Thus $\FF X_{I+J} X^-_{G-R-I}$ is a simple submodule of $\P_I(G,R)$ and must be contained in the socle of $\P_I(G,R)$. Conversely, we need to show that any simple submodule $M$ of $\P_I(G,R)$ is contained in the direct sum of $\FF X_{I+J} X^-_{G-R-I}$ for all maximal $J\in\II(R-N(I))$. Using the basis (\ref{eq:BasisP}) for $\P_I(G,R)$ one writes an arbitrary  element of $M$ as
\[
z = \sum_{J\in\II(R-N(I))} c_J X_{I+J}X^-_{G-R-I},\quad c_J\in\FF.
\]
Let $K$ be a minimal independent set in $\II(R-N(I))$ such that $c_K\ne0$. It suffices to show that $K$ is also maximal in $\II(R-N(I))$. If not, then there exists $r\in R-K$ such that $K+r\in\II(R-N(I))$. For any $J\in \II(R-N(I))$, one sees that 
\[
x_rX_{I+J}X^-_{G-R-I} = 
\begin{cases}
0, & {\rm if}\ r\in J\cup N(I\cup J), \\
X_{I+J+r}X^-_{G-R-I}\ne0, & {\rm otherwise.}
\end{cases}
\]
Thus in the expansion of $x_r z$ in terms of the basis (\ref{eq:BasisP}), the coefficients of $X_{I+K}X^-_{G-R-I}$ and $X_{I+K+r}X^-_{G-R-I}$ are $0$ and $c_K\ne0$, respectively. It follows that $x_rz\notin \FF z$ and $M$ is at least $2$-dimensional. This contradicts the simplicity of $M$.

(iv) Let $I\in \II(G-R)$. We order the elements $X_{I+ J} X^-_{G-R-I}$ by $|J|$ for all $J\in\II(R-N(I))$. This induces a filtration for $\P_I(G,R)$, under which 
\[
x_v X_{I+ J} X^-_{G-R-I} \equiv 
\begin{cases}
X_{I+ J} X^-_{G-R-I}, & v\in I,\\
0, & v\notin I.
\end{cases}
\]
Hence every simple composition factor of $\P_I(G,R)$ is isomorphic to $\C_I(G,R)$. The 
Cartan matrix follows.

%Thus the Cartan matrix of $\H(G,R)$ is a diagonal matrix whose diagonal entries are $|\II(R-N(I))|$ for all $I\in\II(G-R)$.

(v) This follows from the decomposition of $\H(G,R)$ given in Theorem~\ref{thm:decomp} and the equality
\[
\sum_{I\in\mathcal I(G-R)} X_IX^-_{G-R-I} 
= \sum_{J\in\II(G-R)} \sum_{I\subseteq J} (-1)^{|J\setminus I|} X_J
= 1. 
\]
The reader who is not familiar with primitive orthogonal idempotents can find more details in~\cite[\S I.4]{ASS}.
\end{proof}

\subsection{Induction and restriction}
Let $G'$ be an induced subgraph of $G$ and let $R'=G'\cap R$. By Corollary~\ref{cor:EmbedAlgebra}, the following induction and restriction are well defined for isomorphism classes of modules: %so is the induction, as the tensor functor is right exact. 
\begin{itemize}
\item
the induction $M\uparrow\,\!_{G',R'}^{G,R}:=\H(G,R)\otimes_{\H(G',R')} M$ of an $\H(G',R')$-module $M$ to $\H(G,R)$,\vskip3pt
\item
the restriction $N\downarrow\,\!_{G',R'}^{G,R}$ of an $\H(G,R)$-module $N$ to $\H(G',R')$. 
\end{itemize}

\begin{proposition}\label{prop:IndC}
Assume $R=\emptyset$, and hence $R'=\emptyset$. Write $(G,R)=(G)$ and $(G',R')=(G')$. Then for any $I'\in\II(G')$,
\[
\C_{I'}(G')\uparrow\,_{G'}^G \cong\bigoplus_{I\in\II(G):I\cap G'=I'} \C_I(G).
\]
\end{proposition}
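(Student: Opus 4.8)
The plan is to exploit semisimplicity and reduce everything to a computation of $\mathrm{Hom}$-spaces. Since $R=\emptyset$, Corollary~\ref{cor:decomp}(i) guarantees that both $\H(G)$ and $\H(G')$ are semisimple, with complete lists of (one-dimensional) simple modules $\{\C_I(G):I\in\II(G)\}$ and $\{\C_{I'}(G'):I'\in\II(G')\}$. Consequently $\C_{I'}(G')\uparrow\,_{G'}^G$ is itself semisimple, so it is determined up to isomorphism by the multiplicity with which each $\C_I(G)$ occurs; and because $\C_I(G)$ is one-dimensional (so $\mathrm{End}_{\H(G)}\C_I(G)=\FF$ over any field), that multiplicity equals $\dim_\FF\mathrm{Hom}_{\H(G)}\bigl(\C_{I'}(G')\uparrow\,_{G'}^G,\,\C_I(G)\bigr)$. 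Thus the entire statement reduces to evaluating these Hom-spaces.

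First I would identify the restriction of a simple module. Restricting $\C_I(G)$ to the subalgebra $\H(G')\subseteq\H(G)$ furnished by Corollary~\ref{cor:EmbedAlgebra}, the generator $x_v$ with $v\in V(G')$ acts as $1$ when $v\in I$ and as $0$ otherwise; since $v\in V(G')$, this coincides with the action on $\C_{I\cap V(G')}(G')$ described in Theorem~\ref{thm:decomp}. Here $I\cap V(G')$ is genuinely independent in $G'$ precisely because $G'$ is an \emph{induced} subgraph, so this really is one of the simple modules, and $\C_I(G)\downarrow\,_{G'}^G\cong\C_{I\cap V(G')}(G')$.

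Next I would invoke the tensor--hom adjunction (Frobenius reciprocity): since induction $\H(G)\otimes_{\H(G')}(-)$ is left adjoint to restriction,
\begin{align*}
\mathrm{Hom}_{\H(G)}\bigl(\C_{I'}(G')\uparrow\,_{G'}^G,\,\C_I(G)\bigr)
&\cong\mathrm{Hom}_{\H(G')}\bigl(\C_{I'}(G'),\,\C_I(G)\downarrow\,_{G'}^G\bigr)\\
&\cong\mathrm{Hom}_{\H(G')}\bigl(\C_{I'}(G'),\,\C_{I\cap V(G')}(G')\bigr).
\end{align*}
As $\H(G')$ is semisimple and these are one-dimensional simple modules, the last space has dimension $1$ when $I\cap V(G')=I'$ and dimension $0$ otherwise. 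Hence each $\C_I(G)$ with $I\cap V(G')=I'$ occurs with multiplicity exactly one and no other simple occurs, which is precisely the asserted decomposition.

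I expect the delicate points to be bookkeeping rather than substance: pinning down that the restriction lands exactly on $\C_{I\cap V(G')}(G')$ (which genuinely uses the induced-subgraph hypothesis so that independence is inherited), and using the correct direction of the adjunction. As an optional sanity check one could instead verify the dimension count $\dim\bigl(\C_{I'}(G')\uparrow\,_{G'}^G\bigr)=\bigl|\{I\in\II(G):I\cap V(G')=I'\}\bigr|$ directly from the spanning set $\{X_J\otimes z_{I'}:J\in\II(G)\}$ of the induced module, but with semisimplicity available the Hom computation already determines the module completely.
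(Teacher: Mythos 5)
Your proof is correct, and it takes a genuinely different route from the paper. The paper argues by explicit construction: it builds a surjection from $\H(G)\otimes_{\H(G')}\FF z$ onto the principal ideal $\H(G)X_{I'}X^-_{G'-I'}$, shows the natural spanning set $\{X_I\otimes z: I\in\II(G),\ I\cap G'=I'\}$ maps to a basis via a triangularity argument (the elements $X_I X^-_{G'-I'}$ are triangularly related to the linearly independent $X_I$'s), and then reads off the composition factors $\C_I(G)$ from a length filtration, invoking semisimplicity only at the very end to convert composition factors into direct summands. You instead bypass all explicit module realizations: semisimplicity of $\H(G)$ (Corollary~\ref{cor:decomp}(i)) makes the induced module a direct sum of simples, one-dimensionality of the simples makes multiplicities equal to Hom-dimensions over any field, and the tensor--hom adjunction reduces everything to the restriction rule $\C_I(G)\downarrow\,_{G'}^G\cong\C_{I\cap V(G')}(G')$ --- which is exactly the first (easy) assertion of the paper's Proposition~\ref{prop:IndP}, proved there independently of Proposition~\ref{prop:IndC}, so there is no circularity. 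You also correctly flag the two genuine pressure points: that $G'$ being an \emph{induced} subgraph is what makes $I\cap V(G')$ independent in $G'$, and that the adjunction must be used in the direction where induction is the left adjoint. What your approach buys is brevity and conceptual transparency; what the paper's approach buys is an explicit model of the induced module as the ideal generated by the idempotent $X_{I'}X^-_{G'-I'}$, together with an explicit basis and dimension count, information the paper reuses implicitly in the surrounding development. One small caveat: your parenthetical sanity check via the spanning set $\{X_J\otimes z_{I'}: J\in\II(G)\}$ only yields an upper bound on the dimension without the further reduction $X_J\otimes z = X_{J\cup I'}\otimes z$ for $J\cap G'\subseteq I'$ (and $X_J\otimes z=0$ otherwise), but since you present it as optional and your Hom computation already pins down the module exactly, this does not affect the validity of the argument.
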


\begin{proof}
Suppose that $\C_{I'}(G')=\FF z$. Using the universal property of the tensor product one obtains an algebra sujection
\[
\phi: \H(G)\otimes_{\H(G')}\FF z \twoheadrightarrow \H(G)X_{I'}X^-_{G'-I'}
\]
which sends  $X_J\otimes_{\H(G')} z$ to $X_J X_{I'}X^-_{G'-I'}$ for all $J\in\II(G)$. One sees that $\H(G)\otimes_{\H(G')}\FF z$ is spanned by
\[
\{X_I\otimes_{\H(G')} z: I\in\II(G),\ I\cap G'=I'\}
\]
since $x_vz=0$ for all $v\in G'-I'$. This spanning set is sent by $\phi$ to 
\[
\{X_I X^-_{G'-I'}: I\in\II(G),\ I\cap G'=I'\}
\]
which is a basis for $\H(G)X_{I'}X^-_{G'-I'}$ since it is a spanning set triangularly related to $\{X_I:I\in\II(G),\ I\cap G'=I'\}$, a linearly independent set in $\H(G)$. Thus $\phi$ is an isomorphism. Using the length filtration induced by $|I|$ for all $I$ appearing in the above basis, one sees that the composition factors of $\H(G)X_{I'}X^-_{G'-I'}$ are $\C_I(G)$ for all $I\in\II(G)$ with $I\cap G'=I'$, each appearing exactly once. This completes the proof as $\H(G)$ is semisimple by Corollary~\ref{cor:decomp} (i).
%For any $I\in\II(G)$ with $I\cap V'=I'$, one sees that
%\[
%X_IX^-_{G-I} = \sum_{J\in\II(G):I\subseteq J,\ J\cap V'=I'} (-1)^{|J-I|} X_J X^-_{G'-I'}.
%\]
%Thus $\H(G,R)X_{I'}X^-_{G'-I'}$ has another basis 
%\[
%\{X_I X^-_{G-I}: I\in\II(G):I\cap V'=I'\}.
%\]
%Each element in this basis spans a simple $\H(G)$-module. This completes the proof.
\end{proof}

\begin{proposition}\label{prop:IndP}
Let $I\in\II(G-R)$ and $J\in\II(G'-R')$. Then $\C_I(G,R)\downarrow\,\!_{G',R'}^{G,R} \cong \C_{I\cap G'}(G',R')$ and 
\[
\P_J(G',R')\uparrow\,_{G',R'}^{G,R} \cong \bigoplus_{K\in\II(G-R): K\cap G' = J } \P_K(G,R).
\]
\end{proposition}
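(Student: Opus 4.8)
The statement splits into a restriction part and an induction part; the plan is to dispose of the restriction directly and reduce the induction to the already-proved semisimple case, Proposition~\ref{prop:IndC}.

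First I would handle the restriction, which is essentially immediate. The module $\C_I(G,R)$ is one-dimensional with $x_v$ acting by $1$ for $v\in I$ and by $0$ otherwise. By Corollary~\ref{cor:EmbedAlgebra} the restriction to $\H(G',R')$ retains only the operators $x_v$ with $v\in V'$, and these act by $1$ exactly when $v\in I\cap G'$ and by $0$ otherwise. Since $I\cap G'$ is an independent set contained in $V'\setminus R=V'\setminus R'$, it lies in $\II(G'-R')$, and the displayed action is precisely the one defining $\C_{I\cap G'}(G',R')$. Hence $\C_I(G,R)\downarrow\,_{G',R'}^{G,R}\cong\C_{I\cap G'}(G',R')$.

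For the induction, the plan is to pass through the tower of subalgebras $\H(G'-R',\emptyset)\subseteq\H(G-R,\emptyset)\subseteq\H(G,R)$, each inclusion supplied by Corollary~\ref{cor:EmbedAlgebra}, together with transitivity of induction. First I would rewrite $\P_J(G',R')$ via Corollary~\ref{cor:decomp}(ii) as $\C_J(G'-R',\emptyset)\uparrow\,_{G'-R'}^{G',R'}$, induce up to $\H(G,R)$, and collapse the composite induction (associativity of $\otimes$) to get
\[
\P_J(G',R')\uparrow\,_{G',R'}^{G,R}\;\cong\;\C_J(G'-R',\emptyset)\uparrow\,_{G'-R'}^{G,R}\;\cong\;\left(\C_J(G'-R',\emptyset)\uparrow\,_{G'-R'}^{G-R}\right)\uparrow\,_{G-R}^{G,R},
\]
where the second step factors the induction through $\H(G-R,\emptyset)$. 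Then I would apply Proposition~\ref{prop:IndC} to the pair $(G-R,\,G'-R')$, both having empty deletion set, obtaining $\C_J(G'-R',\emptyset)\uparrow\,_{G'-R'}^{G-R}\cong\bigoplus_K\C_K(G-R,\emptyset)$ summed over $K\in\II(G-R)$ with $K\cap(G'-R')=J$; since each such $K$ lies in $V(G)\setminus R$, this condition is the same as $K\cap G'=J$. Finally, inducing each summand to $\H(G,R)$ and invoking Corollary~\ref{cor:decomp}(ii) once more in the form $\C_K(G-R,\emptyset)\uparrow\,_{G-R}^{G,R}\cong\P_K(G,R)$ yields the asserted decomposition $\bigoplus_{K\cap G'=J}\P_K(G,R)$.

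The main obstacle I anticipate is bookkeeping rather than analysis: one must confirm that the three algebras genuinely form a tower of subalgebras (legitimizing both factorizations of the induction) and that the index set $\{K:K\cap(G'-R')=J\}$ from Proposition~\ref{prop:IndC} coincides with $\{K\in\II(G-R):K\cap G'=J\}$. As an independent cross-check — and an alternative, more hands-on route — one can argue at the level of idempotents: writing $e_K:=X_KX^-_{G-R-K}$ for the primitive orthogonal idempotents of Corollary~\ref{cor:decomp}(v), the image of $X_JX^-_{G'-R'-J}$ under the embedding $\H(G',R')\hookrightarrow\H(G,R)$ equals $\sum_{K\cap G'=J}e_K$. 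This identity follows from $\big(X_JX^-_{G'-R'-J}\big)e_K=[\,K\cap G'=J\,]\,e_K$, which rests only on the elementary relations $x_v^2=x_v$ and $x_v(1-x_v)=0$ for $v\notin R$; the standard isomorphism $\H(G,R)\otimes_{\H(G',R')}\H(G',R')e\cong\H(G,R)e$ for an idempotent $e$ of the subalgebra then gives the same conclusion.
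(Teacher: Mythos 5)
Your proposal is correct and follows essentially the same route as the paper's proof: the restriction is read off directly from the definition of $\C_I(G,R)$, and the induction is computed exactly as in the paper by rewriting $\P_J(G',R')$ via Corollary~\ref{cor:decomp}(ii), factoring the induction through the tower $\H(G'-R',\emptyset)\subseteq\H(G-R,\emptyset)\subseteq\H(G,R)$ using transitivity, applying Proposition~\ref{prop:IndC} (with the observation that $K\cap(G'-R')=J$ is equivalent to $K\cap G'=J$ since $K\subseteq V(G)\setminus R$), and then invoking Corollary~\ref{cor:decomp}(ii) once more. Your supplementary idempotent cross-check, i.e.\ that the image of $X_JX^-_{G'-R'-J}$ in $\H(G,R)$ equals $\sum_{K\in\II(G-R),\,K\cap G'=J}X_KX^-_{G-R-K}$ together with $\H(G,R)\otimes_{\H(G',R')}\H(G',R')e\cong\H(G,R)e$, is sound and gives a legitimate alternative verification, but it is not needed for, and does not appear in, the paper's argument.
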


\begin{proof}
The restriction of $\C_I(G,R)$ follows easily from the definition. By Corollary~\ref{cor:decomp} (ii) and Proposition~\ref{prop:IndC},
\begin{eqnarray*}
\P_J(G',R')\uparrow\,_{G',R'}^{G,R} &\cong&
\C_J(G'-R',\emptyset) \uparrow\,_{G'-R',\emptyset}^{G',R'} \uparrow\,_{G',R'}^{G,R} \\
&\cong& \C_J(G'-R',\emptyset) \uparrow\,_{G'-R',\emptyset}^{G,R} \\
&\cong& \C_J(G'-R',\emptyset) \uparrow\,_{G'-R',\emptyset}^{G-R,\emptyset} \uparrow\,_{G-R,\emptyset}^{G,R} \\
&\cong & \bigoplus_{ K\in\II(G-R),\ K\cap G'=J} \C_K(G-R,\emptyset) \uparrow\,_{G-R,\emptyset}^{G,R}  \\
&\cong & \bigoplus_{K\in\II(G-R),\ K\cap G'=J} \P_K(G,R). 
\end{eqnarray*}
This completes the proof. % the second result holds (which also implies the first result by the Frobenius reciprocity).
\end{proof}

\begin{remark}
It is not hard to obtain the simple composition factors of the induction of a simple $\H(G',R')$-module to $\H(G,R)$. But the restriction of a projective indecomposable $\H(G,R)$-module to $\H(G',R')$ is not always projective.
\end{remark}

\section{Commutative Hecke algebras of type A}\label{sec:H01}

We apply the previous results to commutative Hecke algebras of type A with independent parameters.
 
\subsection{Decomposition of Fibonacci numbers}

Let $(W,S)$ be the Coxeter system of type $A_n$ whose Coxeter diagram is the path $\xymatrix @C=8pt{ s_1 \ar@{-}[r] & s_2 \ar@{-}[r] & \cdots \ar@{-}[r] & s_n}$. We often identify $s_i$ with $i$ and write $\bq:=(q_1,\ldots,q_n)\in \FF^n$. Let $\H(\bq)$ be a collapse-free and commutative Hecke algebra of $(W,S)$ with independent parameters $\bq$. Then Theorem~\ref{thm:commutative} implies that either $q_i=0$ for all odd $i\in[n]$ and $q_i\ne0$ for all even $i\in[n]$, or the other way around. Proposition~\ref{prop:HGR} provides an algebra isomorphism $H(\bq)\cong \H(P_n,R)$, where $R:=\{i\in[n]:q_i=-1\}$. Note that the set $R$ obtained from $\H(\bq)$ depends on $\cha(\FF)$. For example, if $\bq=(1,0,1,0,1,\ldots)$ then $R=\emptyset$ and $\H(P_n,R)$ is semisimple if $\cha\FF\ne2$, but $R=\{1,3,5,\ldots\}$ and $\H(P_n,R)$ is not semisimple if $\cha(\FF)=2$. However, the algebra $\H(P_n,R)$ is defined for any subset $R\subseteq[n]$ and our results do not depend on $\cha(\FF)$. We first give decompositions of the Fibonacci numbers.

\begin{proposition}\label{prop:DimDecomp}
Let $R\subseteq[n]$. Then
\[
F_{n+2}=\sum_{I\in\II(P_n-R)}|\II(R-N(I))|.
\]
\end{proposition}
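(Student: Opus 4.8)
The plan is to read both sides of the identity as the dimension of the commutative algebra $\H(P_n,R)$, computed in two different ways. The right-hand side is visibly the dimension count furnished by the module decomposition of Theorem~\ref{thm:decomp}, whereas the left-hand side records the total dimension obtained from the monomial basis of Proposition~\ref{prop:basisG}. So the whole proof amounts to lining these two computations up.

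First I would apply Theorem~\ref{thm:decomp} to the graph $G=P_n$ with the given $R\subseteq[n]$. This yields the $\H(P_n,R)$-module decomposition $\H(P_n,R)=\bigoplus_{I\in\II(P_n-R)}\P_I(P_n,R)$, in which each summand $\P_I(P_n,R)$ has dimension $|\II(R-N(I))|$. Taking $\FF$-dimensions of both sides gives $\dim_\FF\H(P_n,R)=\sum_{I\in\II(P_n-R)}|\II(R-N(I))|$, which is precisely the right-hand side of the claimed identity. It then remains to evaluate $\dim_\FF\H(P_n,R)$ independently: by Proposition~\ref{prop:basisG} this dimension equals $|\II(P_n)|$, and the recursion $|\II(P_n)|=|\II(P_{n-1})|+|\II(P_{n-2})|$ with base cases $|\II(P_0)|=1$, $|\II(P_1)|=2$ (already worked out in the proof of Corollary~\ref{cor:ComDim}) identifies this with the Fibonacci number $F_{n+2}$. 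Combining the two computations yields the equality.

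There is essentially no obstacle here: the identity is a bookkeeping consequence of the facts that $\dim_\FF\H(P_n,R)$ is independent of $R$ and that its $R$-dependent block structure is already given by Theorem~\ref{thm:decomp}. If one prefers a proof avoiding the algebra altogether, the same statement follows from the unique-decomposition observation used in the proof of Proposition~\ref{prop:basisG}: the map $(I,J)\mapsto I\cup J$ is a bijection from the set of pairs $\{(I,J):I\in\II(P_n-R),\ J\in\II(R-N(I))\}$ onto $\II(P_n)$, so the right-hand side literally counts $|\II(P_n)|=F_{n+2}$. Either route is short; the only thing worth writing out carefully is the identification of $|\II(P_n)|$ with $F_{n+2}$, and that has already been established.
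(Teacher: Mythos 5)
Your proof is correct and follows essentially the same route as the paper: both compute $\dim_\FF\H(P_n,R)$ in two ways, once as $|\II(P_n)|=F_{n+2}$ via Proposition~\ref{prop:basisG} (and Corollary~\ref{cor:ComDim}) and once via the decomposition of Theorem~\ref{thm:decomp}. Your closing remark that the identity also follows directly from the bijection $(I,J)\mapsto I\cup J$ is a valid purely combinatorial shortcut, but it is an aside rather than a departure from the paper's argument.
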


\begin{proof}
Let $G$ be a simple graph and let $R\subseteq V(G)$. By Proposition~\ref{prop:basisG}, the dimension of $\H(G,R)$ is $|\II(G)|$. By Theorem~\ref{thm:decomp}, $\H(G,R)$ is the direct sum of $\P_I(G,R)$ for all $I\in\II(G-R)$, and the dimension of each $\P_I(G,R)$ is $|\II(R-N(I))|$. Hence 
\[
|\II(G)|=\sum_{I\in\II(G-R)}|\II(R-N(I))|.
\]
Now take $G=P_n$. We know that $|\II(P_n)|=F_{n+2}$ by Corollary~\ref{cor:ComDim}. Thus the result holds.
\end{proof}

\begin{example}\label{example:Fib1}
Let $R:=[m]$ for some $m\in[n-1]$. Then the subgraph of $P_n$ induced by $R$ is the path $P_m$. If $I\in\II(P_n-[m+1])$ then $\II(R-N(I))=\II(R)$. If $I\in\II(P_n-R)$ contains $m+1$ then $I-\{m+1\}\in\II(P_n-[m+2])$ and $\II(R-N(I)=\II([m-1])$. Thus we recover a well known identity
$
F_{n+2} = F_{m+2}F_{n-m+1} + F_{m+1}F_{n-m}.
$
\end{example}

\begin{example}\label{example:Fib2}
Let $X$ and $Y$ be the subsets of odd and even numbers in $[n]$, respectively. Then 
\[
F_{n+2} = \sum_{ I\subseteq X} 2^{|Y-N(I)|} = \sum_{J\subseteq Y} 2^{|X-N(J)|}. 
\]
This writes a Fibonacci number as a sum of $2^{|X|}$ or $2^{|Y|}$ many powers of $2$. Some small examples are provided below.
\[
\begin{tabular}{|c|c||c|c|}
\hline
n=1 &  2 = 1+1 = 2 & n=2 & 3 = 2+1 \\
\hline
n=3 & 5 = 2+1+1+1 = 4+1 & n=4 & 8 = 4+2+1+1 \\
\hline
n=5 & 13 = 4+2+2+1+1+1+1=1 = 8+2+2+1 & n=6 & 21 = 8+4+2+2+2+1+1+1 \\
\hline
\end{tabular}
\]
\end{example}

\subsection{The semisimple commutative case}

Now we study the representation theory of the semisimple commutative algebra $\H_n := \H(P_{n-1},\emptyset)$, where $\H_0:=\FF$ by convention. We write $\alpha\propto n$ if $\alpha=(\alpha_1,\ldots,\alpha_\ell)$ is a composition of $n$ with all internal parts larger than $1$, i.e. $\alpha_i>1$ whenever $1<i<\ell$.

\begin{proposition}
The algebra $\H_n$ decomposes into a direct sum of $F_{n+1}$ many 1-dimensional simple submodules $\C_\alpha$ indexed by $\alpha\propto n$, with the $\H_n$-action on $\C_\alpha$ given by $x_i=1$ if $i\in D(\alpha)$ or $x_i=0$ otherwise.
\end{proposition}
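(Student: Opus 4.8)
The plan is to specialize the general structure theorem for $\H(G,R)$ (Theorem~\ref{thm:decomp}) to the case $G = P_{n-1}$ and $R = \emptyset$, and then to translate the indexing data from independent sets in the path to compositions of $n$ with internal parts larger than $1$. Since $R = \emptyset$, Corollary~\ref{cor:decomp}(i) tells us that $\H_n = \H(P_{n-1},\emptyset)$ is semisimple, so $\P_I = \C_I$ and the decomposition in Theorem~\ref{thm:decomp} becomes a direct sum of one-dimensional simple submodules $\C_I(P_{n-1},\emptyset)$ indexed by $I \in \II(P_{n-1})$. By Corollary~\ref{cor:ComDim}, there are $|\II(P_{n-1})| = F_{n+1}$ such independent sets, which matches the claimed count. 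The $\H_n$-action on $\C_I$ is already given by Theorem~\ref{thm:decomp}: $x_i = 1$ if $i \in I$ and $x_i = 0$ otherwise. So the entire substantive content of the proposition, beyond quoting earlier results, is the combinatorial reindexing of $\II(P_{n-1})$ by compositions $\alpha \propto n$, in a way compatible with the action formula.

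First I would set up the bijection between independent sets of the path $P_{n-1}$ (on vertices $\{1,\ldots,n-1\}$, corresponding to the generators $s_1,\ldots,s_{n-1}$) and compositions $\alpha \propto n$. The natural candidate is the map sending a subset $I \subseteq [n-1]$ to the composition $\alpha$ of $n$ with descent set $D(\alpha) = I$; recall from the preliminaries that $\alpha \mapsto D(\alpha)$ is already a bijection between \emph{all} compositions of $n$ and \emph{all} subsets of $[n-1]$. Under this bijection, the condition that $I$ is an independent set in $P_{n-1}$ — i.e. $I$ contains no two consecutive integers $i, i+1$ — should correspond exactly to the condition that $\alpha$ has all internal parts larger than $1$. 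I would verify this directly: a part $\alpha_j = 1$ with $1 < j < \ell$ forces two consecutive elements of $D(\alpha)$ (namely $\sigma_{j-1}$ and $\sigma_{j-1}+1 = \sigma_j$), hence an edge inside $I$; conversely, consecutive elements $i, i+1 \in D(\alpha)$ produce an internal part equal to $1$. The endpoints require a small check: parts $\alpha_1$ or $\alpha_\ell$ equal to $1$ correspond to $1 \in D(\alpha)$ or $n-1 \in D(\alpha)$, which are allowed since these are endpoints of the path and impose no adjacency violation. This is why the constraint is on \emph{internal} parts only.

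Having established the bijection $I \leftrightarrow \alpha$ with $D(\alpha) = I$, I would then simply define $\C_\alpha := \C_I(P_{n-1},\emptyset)$ for the corresponding $I$, and read off the action: since $x_i = 1$ exactly when $i \in I = D(\alpha)$, the action statement $x_i = 1$ if $i \in D(\alpha)$ and $x_i = 0$ otherwise is immediate. The count $F_{n+1}$ follows from $|\II(P_{n-1})| = F_{n+1}$, which is the $n-1$ case of the recurrence $|\II(P_m)| = F_{m+2}$ proved in Corollary~\ref{cor:ComDim}.

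I do not expect any genuine obstacle here; this proposition is essentially a dictionary entry translating the $G = P_{n-1}$, $R = \emptyset$ instance of Theorem~\ref{thm:decomp} and Corollary~\ref{cor:decomp}(i) into composition language. The only point demanding care — and the one I would present most explicitly — is the equivalence ``$I$ independent in $P_{n-1}$'' $\iff$ ``$\alpha = D^{-1}(I)$ has all internal parts $>1$,'' together with the asymmetry that makes the endpoint parts unconstrained. Everything else (semisimplicity, one-dimensionality of the simples, the explicit action, the dimension count) is inherited verbatim from the earlier general theory, so the proof will be short.
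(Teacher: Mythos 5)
Your proposal is correct and follows essentially the same route as the paper's own proof: specialize Theorem~\ref{thm:decomp} with $G=P_{n-1}$ and $R=\emptyset$ (so each $\P_I$ is one-dimensional and equals $\C_I$), then reindex $\II(P_{n-1})$ by compositions $\alpha\propto n$ via the descent-set bijection. Your explicit check that independence of $D(\alpha)$ in $P_{n-1}$ is equivalent to all internal parts of $\alpha$ exceeding $1$ is precisely the step the paper compresses into ``one sees that,'' so the two arguments coincide in substance.
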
 

\begin{proof}
For any composition $\alpha$ of $n$, one sees that $D(\alpha)$ is an independent set of $P_{n-1}$ if and only if $\alpha$ has no internal parts equal to $1$. Thus the result follows from Theorem~\ref{thm:decomp}.
\end{proof}

Since $\H_n$ is semisimple, its two Grothendieck groups $G_0(\H_n)$ and $K_0(\H_n)$ are the same. Given nonnegative integers $m$ and $n$, the subalgebra of $\H_{m+n}$ generated by $x_1,\ldots,x_{m-1},x_{m+1},\ldots,x_{m+n-1}$ is isomorphic to $\H_m\otimes\H_n$, giving a natural embedding $\H_m\otimes\H_n\hookrightarrow\H_{m+n}$. Thus there is a tower $\H_\bullet: \H_0\hookrightarrow \H_1\hookrightarrow \H_2\hookrightarrow \cdots$ of algebras, whose Grothendieck group $G_0(\H_\bullet):=\bigoplus_{n\geq0} G_0(\H_n)$ has a product and a coproduct defined by
\[
\C_\alpha \htimes \C_\beta := \left( \C_\alpha \otimes \C_\beta \right) \uparrow\,\!_{\H_m\otimes \H_n}^{\H_{m+n}} 
\quad\text{and}\quad
\Delta(\C_\alpha):=\sum_{0\le i\le m} \C_\alpha \downarrow\,\!_{\H_i\otimes \H_{m-i}}^{\H_m}
\]
for all $\alpha\propto m$ and $\beta\propto n$. One sees that the product $\htimes$ and the coproduct $\Delta$ are well defined, with unit $u$ sending $1$ to $C_\emptyset$, and counit $\epsilon$ sending $C_\emptyset$ to $1$ and $C_\alpha$ to $0$ for all $\alpha\propto n$, $n\geq1$. Applying Proposition~\ref{prop:IndP} immediately gives the following explicit formulas for the product and coproduct below. See \S\ref{sec:rep} for the notation $\alpha\beta$, $\alpha\rhd\beta$, $\alpha_{\leq i}$, and $\alpha_{>i}$.

\begin{proposition}\label{prop:simple}
For any $\alpha\propto m$ and $\beta\propto n$, one has
\[
\C_\alpha \htimes \C_\beta 
= \begin{cases}
\C_{\alpha\beta} \oplus \C_{\alpha \rhd \beta}, & {\rm if}\ \alpha\beta\propto m+n, \\
\C_{\alpha\rhd\beta}, & {\rm otherwise,}
\end{cases}
\quad\text{and}\quad
\Delta(\C_\alpha) = \sum_{0\le i\le m} \C_{\alpha_{\leq i}}  \otimes \C_{\alpha_{>i}}.
\]
\end{proposition}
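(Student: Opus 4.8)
The plan is to translate the entire statement into the independent-set language of Section~\ref{sec:H(G,R)} and then read off both formulas directly from Proposition~\ref{prop:IndP}. The starting point is the dictionary $\H_n=\H(P_{n-1},\emptyset)$, under which a composition $\alpha\propto n$ is identified with the independent set $D(\alpha)\subseteq[n-1]$ of the path $P_{n-1}$, so that $\C_\alpha=\C_{D(\alpha)}(P_{n-1})$. Under this dictionary the embedding $\H_m\otimes\H_n\hookrightarrow\H_{m+n}$ is exactly the one attached to the induced subgraph $G'=P_{m+n-1}-\{m\}=P_{m-1}\sqcup P_{n-1}$, and since $R=\emptyset$ every projective indecomposable coincides with the corresponding simple module. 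Thus Proposition~\ref{prop:IndP} specializes to the restriction rule $\C_I(P_{m+n-1})\downarrow_{G'}\cong\C_{I\cap G'}(G')$ and the induction rule $\C_J(G')\uparrow^{P_{m+n-1}}\cong\bigoplus_{K\cap G'=J}\C_K(P_{m+n-1})$, and the whole argument becomes a matter of enumerating the relevant independent sets.

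For the coproduct I would fix $\alpha\propto m$, take $G=P_{m-1}$ and $G'=P_{m-1}-\{i\}=P_{i-1}\sqcup P_{m-i-1}$, and apply the restriction rule. The content is purely combinatorial: deleting the vertex $i$ from $D(\alpha)$ and splitting the remaining descents across the two components produces exactly the descent set $D(\alpha_{\le i})$ on $P_{i-1}$ and, after shifting down by $i$, the descent set $D(\alpha_{>i})$ on $P_{m-i-1}$, which one checks against the definitions of $\alpha_{\le i}$ and $\alpha_{>i}$ through the partial sums $\sigma_r$. Both sets are subsets of $D(\alpha)$ and hence independent, so $\alpha_{\le i}\propto i$ and $\alpha_{>i}\propto m-i$ and the factors $\C_{\alpha_{\le i}},\C_{\alpha_{>i}}$ genuinely exist. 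This yields $\C_\alpha\downarrow_{\H_i\otimes\H_{m-i}}^{\H_m}\cong\C_{\alpha_{\le i}}\otimes\C_{\alpha_{>i}}$, and summing over $i=0,\dots,m$ gives the stated formula, with the endpoints $i=0,m$ contributing $\C_\emptyset\otimes\C_\alpha$ and $\C_\alpha\otimes\C_\emptyset$.

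For the product I would realize $\C_\alpha\otimes\C_\beta$ as $\C_J(G')$ with $J=D(\alpha)\cup(m+D(\beta))$ and apply the induction rule, so the task reduces to listing the independent sets $K$ of $P_{m+n-1}$ with $K\cap G'=J$. Such a $K$ can differ from $J$ only at the deleted junction vertex $m$, so either $K=J$ or $K=J\cup\{m\}$. The set $K=J$ is always independent (the only edges touching $m$ are incident to $m\notin J$) and satisfies $D^{-1}(K)=\alpha\rhd\beta$; the set $K=J\cup\{m\}$ is independent exactly when $m-1,m+1\notin J$, and $D^{-1}(K)=\alpha\beta$ in that case. The main obstacle is precisely this junction bookkeeping: one must match the edge conditions $m-1\notin J$ and $m+1\notin J$ to the part conditions $\alpha_\ell>1$ and $\beta_1>1$ via the partial sums, and observe that $\alpha_\ell>1$ together with $\beta_1>1$ is exactly the condition $\alpha\beta\propto m+n$. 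Once this identification is in place, the induction rule delivers the two-case product formula immediately.
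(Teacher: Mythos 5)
Your proposal is correct and takes essentially the same route as the paper, whose entire proof is the remark that Proposition~\ref{prop:simple} follows by ``applying Proposition~\ref{prop:IndP} immediately'': you have simply spelled out the implicit dictionary $\alpha\propto n \leftrightarrow D(\alpha)\in\II(P_{n-1})$, the identification of $\H_m\otimes\H_n$ with the subalgebra attached to $G'=P_{m+n-1}-\{m\}$, and the junction bookkeeping at the deleted vertex $m$ (with $R=\emptyset$ making $\P_K=\C_K$, so the induction rule of Proposition~\ref{prop:IndP}, equivalently Proposition~\ref{prop:IndC}, applies to simples). One micro-caveat: the clause ``$\alpha_\ell>1$ together with $\beta_1>1$'' must be read as vacuous when $\ell=1$ or $k=1$ (e.g.\ $\alpha=(1)$, $\beta=(2)$ gives $\alpha\beta=(1,2)\propto 3$ despite $\alpha_\ell=1$), which is exactly what your edge conditions $m-1\notin J$, $m+1\notin J$ already deliver, since the vertex $m-1=0$ (resp.\ $m+1=m+n$) does not exist in those cases, so your final criterion $\alpha\beta\propto m+n$ is the correct one.
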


For example, one has $\C_{132}\htimes \C_{41} = \C_{13241}\oplus\C_{1361}$, $\C_{121}\htimes \C_{32} = \C_{1242}$, and
\[
\Delta(\C_{122}) = \C_\emptyset\otimes\C_{122} + \C_1\otimes\C_{22} + \C_{11}\otimes \C_{12} + \C_{12} \otimes C_2 + \C_{121}\otimes\C_1 + \C_{122}\otimes \C_\emptyset.
\]

\begin{corollary}\label{cor:duality}
(i) The graded algebra and coalgebra structures of $G_0(\H_\bullet)$ are dual to each other via the pairing defined by $\langle \C_\alpha,\C_\beta\rangle:=\delta_{\alpha,\,\beta}$ for all $\alpha\propto m$ and $\beta\propto n$, with a self-dual basis $\{\C_\alpha:\alpha\propto n,\ \forall n\geq0\}$.

\noindent
(ii) There is a surjection $\sigma:K_0(\H_\bullet(0)) \twoheadrightarrow G_0(\H_\bullet)$ of graded algebras and an injection $\iota: G_0(\H_\bullet)\hookrightarrow G_0(\H_\bullet(0))$ of graded coalgebras such that the two maps are dual to each other.
\end{corollary}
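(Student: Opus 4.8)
My plan is to establish both parts essentially by reading off the structure constants from Proposition~\ref{prop:simple} and comparing them, in a transposed/dual fashion, with the known structure constants for the $0$-Hecke tower recorded in Proposition~\ref{prop:ProdP}. For part (i), the self-duality claim is really the assertion that the product structure constants in the basis $\{\C_\alpha\}$ coincide with the coproduct structure constants in the same basis. Concretely, I would define the pairing $\langle \C_\alpha,\C_\beta\rangle:=\delta_{\alpha,\beta}$ and then verify the adjunction identity
\[
\langle \C_\alpha\htimes\C_\beta,\ \C_\gamma\rangle = \langle \C_\alpha\otimes\C_\beta,\ \Delta(\C_\gamma)\rangle
\]
for all $\alpha\propto m$, $\beta\propto n$, $\gamma\propto m+n$. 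The left-hand side, by Proposition~\ref{prop:simple}, is $1$ exactly when $\gamma\in\{\alpha\beta,\ \alpha\rhd\beta\}$ (and $\alpha\beta\propto m+n$ in the former case), and $0$ otherwise. The right-hand side picks out the coefficient of $\C_\alpha\otimes\C_\beta$ in $\Delta(\C_\gamma)=\sum_i \C_{\gamma_{\leq i}}\otimes\C_{\gamma_{>i}}$, which is nonzero exactly when $(\alpha,\beta)=(\gamma_{\leq m},\gamma_{>m})$ for some valid splitting at $i=m$. So the core of part (i) is the purely combinatorial lemma that the splitting $\gamma\mapsto(\gamma_{\leq m},\gamma_{>m})$ inverts the two merge operations $\alpha\beta$ and $\alpha\rhd\beta$: the splitting lands on a ``boundary'' between two parts precisely when $\gamma=\alpha\beta$, and in the ``middle'' of a part precisely when $\gamma=\alpha\rhd\beta$. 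I would prove this by a direct case analysis on where $i=m$ falls relative to the descent set $D(\gamma)$, being careful about the constraint $\alpha\propto,\ \beta\propto$ (internal parts $>1$), which is exactly what forces the condition ``$\alpha\beta\propto m+n$'' in Proposition~\ref{prop:simple}. Self-duality of the basis is then immediate since the same structure constants govern product and coproduct.

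**Part (ii): the maps to the $0$-Hecke tower.** For the surjection $\sigma$, I would send $\P_\alpha(0)\mapsto\C_\alpha$ if $\alpha\propto n$ and $\P_\alpha(0)\mapsto 0$ otherwise, and check this respects products. Comparing Proposition~\ref{prop:ProdP}, which gives $\P_\alpha(0)\htimes\P_\beta(0)=\P_{\alpha\beta}(0)\oplus\P_{\alpha\rhd\beta}(0)$, against Proposition~\ref{prop:simple}, one sees the two formulas match perfectly once the terms indexed by compositions with an internal part equal to $1$ are killed: if $\alpha\beta\propto m+n$ then $\sigma$ reproduces both summands $\C_{\alpha\beta}\oplus\C_{\alpha\rhd\beta}$, and otherwise $\C_{\alpha\beta}$ is zeroed out and only $\C_{\alpha\rhd\beta}$ survives (one must check $\alpha\rhd\beta\propto$ always holds when $\alpha,\beta\propto$, since merging the last part of $\alpha$ with the first part of $\beta$ only enlarges an end/interior part and creates no new interior $1$). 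The injection $\iota$ is then defined as the transpose/dual of $\sigma$ with respect to the two dualities $\langle\P_\alpha(0),\C_\beta(0)\rangle=\delta_{\alpha\beta}$ and the self-pairing of part (i); dualizing a coalgebra/algebra surjection of graded pieces yields an injection of graded coalgebras, so that $\iota:G_0(\H_\bullet)\hookrightarrow G_0(\H_\bullet(0))$ is a coalgebra map by construction and injective because $\sigma$ is surjective (each graded piece is finite-dimensional, so the transpose of a surjection is injective). Explicitly $\iota(\C_\alpha)=\C_\alpha(0)$ for $\alpha\propto n$, and I would verify directly that this is compatible with the two coproduct formulas in Proposition~\ref{prop:ProdP} and Proposition~\ref{prop:simple} as a sanity check.

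**Main obstacle.** The representation-theoretic scaffolding is routine; the genuinely delicate point is the combinatorial bookkeeping around the internal-part-$>1$ condition. I expect the hard part to be verifying cleanly that $\sigma$ is well defined as an \emph{algebra} map — i.e.\ that setting $\sigma(\P_\gamma(0))=0$ for every $\gamma$ with an internal part $1$ is consistent with the product, so that projecting away these basis elements from $K_0(\H_\bullet(0))$ is an ideal quotient. This amounts to showing that the span of $\{\P_\gamma(0):\gamma\not\propto\}$ is a two-sided ideal under $\htimes$, which in turn reduces to the claim that whenever $\alpha$ or $\beta$ has a ``bad'' interior $1$, both $\alpha\beta$ and $\alpha\rhd\beta$ are again bad; the only subtle case is a part equal to $1$ sitting at the glued boundary, and I would handle it by checking that the $\rhd$ operation can never repair a pre-existing interior $1$ lying strictly inside $\alpha$ or $\beta$. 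Once this ideal claim is in place, $\sigma$ is an algebra surjection, $\iota$ is its dual, and both parts of the corollary follow.
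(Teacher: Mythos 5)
Your proposal is correct and takes essentially the same route as the paper: part (i) via the adjunction identity $\langle \C_\alpha\htimes\C_\beta,\C_\gamma\rangle=\langle\C_\alpha\otimes\C_\beta,\Delta(\C_\gamma)\rangle$ read off from Proposition~\ref{prop:simple}, and part (ii) via the same maps $\sigma(\P_\alpha(0))=\C_\alpha$ (or $0$) and $\iota(\C_\alpha)=\C_\alpha(0)$, checked by comparing Proposition~\ref{prop:simple} with Proposition~\ref{prop:ProdP} and the pairing $\langle\sigma(\P_\alpha(0)),\C_\beta\rangle=\langle\P_\alpha(0),\iota(\C_\beta)\rangle=\delta_{\alpha,\beta}$. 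Your added details --- the splitting-at-$i=m$ case analysis, the ideal claim that an internal part equal to $1$ survives both $\alpha\beta$ and $\alpha\rhd\beta$, and realizing $\iota$ as the transpose of $\sigma$ --- correctly fill in what the paper leaves as ``one sees'' and ``not hard to check.''
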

 
\begin{proof}
The first assertion holds since it follows from Proposition~\ref{prop:simple} that 
\begin{equation}\label{eq:duality}
\langle \C_\alpha \htimes \C_\beta, \C_\gamma \rangle = \langle \C_\alpha\otimes\C_\beta, \Delta(\C_\gamma) \rangle,\quad \langle \C_\emptyset, \C_\alpha \rangle = \epsilon(\C_{\alpha}).
\end{equation}
For the second assertion, first recall the representation theory of the 0-Hecke algebra $H_n(0)$ from \S\ref{sec:rep}. We define the surjection $\sigma$ by 
\begin{equation}\label{eq:surj}
\sigma(\P_\alpha(0))=
\begin{cases}
\C_\alpha, & \textrm{if } \alpha\propto n, \\
0, & \textrm{otherwise.}
\end{cases}
\end{equation}
We define the injection $\iota$ by sending $\C_\alpha$ to $\C_\alpha(0)$ for all $\alpha\propto n$. One sees that $\sigma$ and $\iota$ are maps of graded algebras and coalgebras, respectively, by comparing Proposition~\ref{prop:simple} with Proposition~\ref{prop:ProdP}. It is not hard to check that 
\[
\langle \sigma(\P_\alpha(0)),\C_\beta \rangle = \langle \P_\alpha(0), \iota(\C_\beta) \rangle = \delta_{\alpha,\,\beta},\quad \forall \alpha\models m,\ \forall \beta\propto n.
\]
This shows that $\sigma$ and $\iota$ are dual maps. Hence (ii) holds.
\end{proof}
 
\begin{remark}
(i) Comparing the definitions for $\H_n$ and $\H_n(0)$ one sees that the former is a quotient of the latter by the relations $T_iT_{i+1}=0$ for all $i=1,\ldots,n-2$. Thus any $\H_n$-module is automatically an $\H_n(0)$-module. This induces the injection $\iota:G_0(\H_\bullet)\hookrightarrow G_0(\H_\bullet(0))$ given in the previous proposition. On the other hand, $\C_\alpha(0)={\rm top} (\P_\alpha(0))$ admits an $\H_n$-action and is hence isomorphic to $\C_\alpha$ if and only if the composition $\alpha$ has all internal parts larger than $1$. This induces the surjection $\sigma:K_0(\H_\bullet(0))\twoheadrightarrow G_0(\H_\bullet)$ defined in (\ref{eq:surj}).

(ii) It is well known that the number of partitions of $n$ is no more than the Fibonacci number $F_{n+1}$. One may suspect that the surjection $K_0(\H_\bullet(0))\cong\NSym\twoheadrightarrow \Sym\cong G_0(\CC\SS_\bullet)$ factors through the surjection $\sigma: K_0(\H_\bullet(0))\twoheadrightarrow G_0(\H_\bullet)$. This is \emph{not} true since the commutative image of the noncommutative ribbon Schur function $\bs_\alpha$ is the ribbon schur function $s_\alpha$, but $f(\P_\alpha(0))=0$ if $\alpha$ is a composition with an internal part equal to $1$. Similarly, one sees that the injection $G_0(\CC\SS_\bullet)\cong\Sym\hookrightarrow \QSym\cong G_0(\H_\bullet(0))$ does not factor through the injection $\iota: G_0(\H_\bullet)\hookrightarrow G_0(\H_\bullet(0))$, since the image of the injection $i$ is spanned by $\C_\alpha(0)$ for all $\alpha\propto n$, $n\geq0$, but $F_\alpha\in \Sym$ when $\alpha=1^n$, $n\geq3$.

(iii) Unfortunately, $G_0(\H_\bullet)$ is not a bialgebra: one checks that $\Delta(\C_{11}\htimes\C_1)\ne \Delta(\C_{11})\htimes\Delta(\C_1)$ where the product on the right hand side is tensor-component-wise. Thus it does not fit into Zelevinsky's theory on \emph{positive self-dual Hopf algebras}~\cite{Zelevinsky}. One also checks that $G_0(\H_\bullet)$ is not a \emph{weak bialgebra} (c.f.~\cite{WeakHopf}), nor an \emph{infinitesimal bialgebra} (c.f.~\cite{InfinitesimalHopf}).
\end{remark}

Next we consider the \emph{Bratteli diagram} of the tower of algebras $\H_0\hookrightarrow \H_1\hookrightarrow \H_2\hookrightarrow \cdots$. It has vertices at level $n$ indexed by $\alpha\propto n$, for $n=0,1,2,\ldots$, and it has an edge between $\alpha\propto n$ and $\beta\propto n-1$ if and only if $\C_\alpha\downarrow\,_{\H_{n-1}}^{\H_n}\cong \C_{\beta}$. One can draw this diagram using  Proposition~\ref{prop:simple}.
%a vertex $\alpha=(\alpha_1,\ldots,\alpha_\ell)$ at level $n$ is adjacent to the vertex $\alpha\langle n-1\rangle=(\alpha_1,\ldots,\alpha_{\ell-1},\alpha_{\ell}-1)$ at level $n-1$, where we ignore possible zeros. ex $\alpha0$ at level $n+1$, and possibly the vertex $\alpha\rhd0$ at level $n+1$ if $\alpha_{n-1}=0$. Thus the Bratteli diagram of $\H$ is a binary tree on all binary sequences which contain no consecutive $1$'s and end with a $0$. The $n$-th level consists of the $F_{n+1}$ many sequences $\alpha\propto n$. 
The first $5$ levels are illustrated below.
\[
\xymatrix @C=8pt @R=5pt {
&\cdots && \cdots && \cdots  \\
4 & & 31 & 22 & 13 & & 121 \\
& 3 \ar@{-}[lu] \ar@{-}[ru] & & 21 \ar@{-}[u] & & 12 \ar@{-}[lu] \ar@{-}[ru] \\
& & 2 \ar@{-}[lu] \ar@{-}[ru] & & 11 \ar@{-}[ru]\\
& & & 1 \ar@{-}[lu] \ar@{-}[ru] \\
& & & \emptyset \ar@{-}[u]
} \]

\subsection{Antipode}

We consider the antipode of $G_0(\H_\bullet)$. In general, let $A$ be an algebra with product $\mu$ and unit $u$, and let $C$ be a coalgebra with coproduct $\Delta$ and counit $\epsilon$. The \emph{convolution product} of two maps $f,g\in {\rm Hom}_{\,\FF}(C,A)$ is defined as $f\star g := \mu\circ(f\otimes g)\circ \Delta$. One can check that $u\circ \epsilon$ is the two-sided identity element for this convolution product. 

Let $(A',\mu',u')$ be another algebra and $(C',\Delta',\epsilon')$ be another coalgebra such that there exists an algebra surjection $\sigma: A\twoheadrightarrow A'$ and a coalgebra injection $\iota: C'\hookrightarrow C$. Then $u'=\sigma\circ u$, $\epsilon'=\epsilon\circ\iota$, and the following diagram is commutative, where $f':=\sigma\circ f\circ\iota$ and $g':=\sigma\circ g \circ \iota$.
\begin{equation}\label{eq:convolution}
\xymatrix @R=15pt{ 
C \ar@{->}[r]^-{\Delta} & C\otimes C \ar@{->}[r]^{f\otimes g} & A\otimes A \ar@{->>}[d]^{\sigma\otimes\sigma} \ar@{->}[r]^-{\mu} & A \ar@{->>}[d]^\sigma \\
C' \ar@{^(->}[u]^\iota \ar@{->}[r]^-{\Delta'} & C'\otimes C' \ar@{^(->}[u]^{\iota\otimes\iota} \ar@{->}[r]^{f'\otimes g'} & A'\otimes A' \ar@{->}[r]^-{\mu'} & A' 
}
\end{equation}

The \emph{antipode} $S$ of a Hopf algebra $H$ is nothing but the 2-sided inverse of the identity map $1_H$ under the convolution product for the endomorphism algebra ${\rm End}_{\,\FF}(H)$. In other words, $S$ is defined by the commutative diagram below.
\[
\xymatrix @R=10pt @C=16pt {
& H\otimes H \ar@{->}[rr]^{S\otimes 1_H} & & H\otimes H \ar@{->}[rd]^{\mu} \\
H \ar@{->}_{\Delta}[rd] \ar@{->}^{\Delta}[ru] \ar@{->}^{\epsilon}[rr] & & \FF \ar@{->}[rr]^{u} & & H \\
& H\otimes H \ar@{->}[rr]_{1_H\otimes S} & & H\otimes H \ar@{->}[ru]_{\mu}
}
\] 
Note that the definition for the antipode $S$ only requires $H$ to be simultaneously an algebra and a coalgebra. Moreover, if the antipode $S$ of $H$ exists, and if there is an algebra surjection $\sigma: H\twoheadrightarrow H'$ and a coalgebra injection $\iota: H'\hookrightarrow H$, then one sees from \eqref{eq:convolution} that $S':=\sigma\circ S\circ \iota$ is the antipode of $H'$.

The antipodes of the dual graded Hopf algebras $\QSym$ and $\NSym$ are well known to the experts. If $\alpha=(\alpha_1,\ldots,\alpha_\ell)$ is a composition of $n$ then its \emph{reverse} is the composition rev$(\alpha):=(\alpha_\ell,\ldots,\alpha_1)$ and its \emph{conjugate} is the composition $\omega(\alpha):=({\rm rev}(\alpha))^c = {\rm rev}(\alpha^c)$. For example, if $\alpha=21321$ then rev$(\alpha)=12312$ and $\omega(\alpha)=22131$. The antipodes of $\QSym$ and $\NSym$ are defined by $S(F_\alpha) = (-1)^n F_{\omega(\alpha)}$ and $S(\bs_\alpha) = (-1)^n \bs_{\omega(\alpha)}$ for all $\alpha\models n$, $n\ge0$, where $\{F_\alpha\}$ and $\{\bs_\alpha\}$ are dual bases for $\QSym$ and $\NSym$.

However, the same rule does not work for $G_0(\H_\bullet)$. To give the antipodes of $G_0(\H_\bullet)$ we introduce a free $\mathbb Z$-module $\Comp$ with a basis consisting of all compositions. By Proposition~\ref{prop:ProdP}, we can define a product $\alpha\htimes\beta := \alpha\beta+\alpha\rhd\beta$ and a coproduct $\Delta(\alpha) := \sum_{0\leq i\leq|\alpha|} \alpha_{\leq i}\otimes\alpha_{>i}$ for all compositions $\alpha$ and $\beta$, such that there is an algebra isomorphism $\Comp\cong K_0(\H_\bullet(0))$ and a coalgebra isomorphism $\Comp\cong G_0(\H_\bullet(0))$. The basis of all compositions for $\Comp$ is self-dual under the pairing $\langle \alpha,\beta\rangle:= \delta_{\alpha,\beta}$. There is an algebra surjection $\sigma: \Comp\twoheadrightarrow G_0(\H_\bullet)$  defined by 
\[
\sigma(\alpha) = 
\begin{cases}
\C_\alpha, & \alpha\propto n,\\
0, &  {\rm otherwise,}
\end{cases} 
\quad\forall \alpha\models n,\quad\forall n\ge0
\]
and a coalgebra injection $\iota: G_0(\H_\bullet)\hookrightarrow \Comp$ sending $\C_\alpha$ to $\alpha$ for all $\alpha\propto n$, $n\ge0$. They are dual to each other by Corollary~\ref{cor:duality} (ii). One can check that $\Comp$ is not a bialgebra, but its antipode exists, giving the antipode of $G_0(\H_\bullet)$.

\begin{proposition}
The map $S$ sending $\alpha$ to $(-1)^n \alpha^c$ for all $\alpha\models n$, $n\geq0$, is the antipode of $\Comp$. Consequently, the antipode of $G_0(\H_\bullet)$ is $\sigma\circ S\circ \iota$, which sends $\C_\alpha$ to $(-1)^n \C_{\alpha^c}$ if both $\alpha\propto n$ and $\alpha^c\propto n$ hold for some $n\ge0$, that is, if $\alpha\in\{22\cdots2, 122\cdots2, 22\cdots21, 122\cdots21\}$, or sends $\C_\alpha$ to $0$ otherwise.
\end{proposition}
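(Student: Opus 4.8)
The plan is to realize $S$ as the two-sided convolution inverse of the identity map $1_\Comp$ in the monoid $({\rm End}_{\,\FF}(\Comp),\star,u\circ\epsilon)$ and to check the defining equation directly on the basis of compositions. First I would record that both structures on $\Comp$ are graded and connected: the product sends $\Comp_m\otimes\Comp_n$ into $\Comp_{m+n}$, the coproduct sends $\Comp_n$ into $\bigoplus_i\Comp_i\otimes\Comp_{n-i}$, and $\Comp_0=\ZZ\,\emptyset$. Writing $1_\Comp=u\circ\epsilon+\pi$ with $\pi:=1_\Comp-u\circ\epsilon$, the iterated coproduct shows that $\pi^{\star k}$ annihilates $\Comp_n$ once $k>n$, so $\sum_{k\ge0}(-1)^k\pi^{\star k}$ is a finite sum in each degree and is a genuine two-sided $\star$-inverse of $1_\Comp$. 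This simultaneously confirms that the antipode exists and that it is unique; consequently it suffices to verify the single one-sided equation $S\star 1_\Comp=u\circ\epsilon$, since any left $\star$-inverse must coincide with the unique two-sided inverse.

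The heart of the argument is therefore the identity
\[
\sum_{i=0}^{n}(-1)^{i}\,(\alpha_{\le i})^c\htimes\alpha_{>i}=0,\qquad \alpha\models n,\ n\ge1,
\]
which I would prove by bookkeeping with descent sets. Write $D:=D(\alpha)\subseteq[n-1]$, and for $T\subseteq[n-1]$ let $\gamma_T\models n$ denote the composition with $D(\gamma_T)=T$. The deconcatenation coproduct gives $D(\alpha_{\le i})=D\cap\{1,\dots,i-1\}$ and $D(\alpha_{>i})=\{d-i:d\in D,\ d>i\}$, whence $D((\alpha_{\le i})^c)=\{1,\dots,i-1\}\setminus D$. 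Expanding the ribbon product through $D(\mu\nu)=D(\mu)\cup\{|\mu|\}\cup(|\mu|+D(\nu))$ and $D(\mu\rhd\nu)=D(\mu)\cup(|\mu|+D(\nu))$, one finds that each interior index $0<i<n$ contributes $(-1)^i(\gamma_{E_i}+\gamma_{E_i\cup\{i\}})$, where $E_i:=(\{1,\dots,i-1\}\setminus D)\cup\{d\in D:d>i\}$, while the extreme indices contribute $\gamma_D$ at $i=0$ and $(-1)^n\gamma_{[n-1]\setminus D}$ at $i=n$. Now fix a target $T$ and set $C:=T\triangle D$. A short check shows that the index $i$ produces $\gamma_T$ exactly once, and only when $C$ equals $\{1,\dots,i-1\}$ or $\{1,\dots,i\}$; hence $\gamma_T$ occurs at all only when $C$ is an initial segment $\{1,\dots,k\}$, and then precisely at the two adjacent indices $i=k$ and $i=k+1$, with $i=0$ and $i=n$ read as the boundary terms. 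Their signs $(-1)^k$ and $(-1)^{k+1}$ cancel, so the coefficient of every $\gamma_T$ vanishes. This telescoping of adjacent splitting points sharing the same symmetric difference $C$ is the crux, and the step I expect to demand the most care is pinning down the boundary indices $i\in\{0,n\}$ together with the degenerate case $n=1$ (where $\{1,\dots,n-1\}=\emptyset$) so that the cancellation is exact.

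For the consequence I would invoke the general principle already recorded before the statement: since $\sigma:\Comp\twoheadrightarrow G_0(\H_\bullet)$ is an algebra surjection and $\iota:G_0(\H_\bullet)\hookrightarrow\Comp$ a coalgebra injection, the antipode of $G_0(\H_\bullet)$ is $\sigma\circ S\circ\iota$. Tracing $\C_\alpha\mapsto\alpha\mapsto(-1)^n\alpha^c\mapsto(-1)^n\C_{\alpha^c}$ (and $\mapsto 0$ when $\alpha^c\not\propto n$) yields the stated formula, so it remains only to identify the compositions with both $\alpha\propto n$ and $\alpha^c\propto n$. Here I would use that $\beta\propto m$ is equivalent to $D(\beta)$ containing no two consecutive integers, i.e. $D(\beta)$ being an independent set of the path $P_{m-1}$. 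Imposing this on both $\alpha$ and $\alpha^c$ forces $D(\alpha)$ and its complement $[n-1]\setminus D(\alpha)$ to be complementary independent sets of the connected path $P_{n-1}$, that is, a proper $2$-colouring; since such a colouring of a connected bipartite graph is unique up to swapping the two classes, $D(\alpha)$ must be either the odd or the even elements of $[n-1]$. Translating these two descent sets back into compositions produces exactly the four families $2\cdots2$, $12\cdots2$, $2\cdots21$, $12\cdots21$, completing the identification.
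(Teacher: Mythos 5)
Your proposal is correct, and its combinatorial core is the same telescoping that drives the paper's proof, but you package it in a genuinely different (dual) way. The paper verifies the two one-sided identities $\sum_i S(\alpha_{\le i})\htimes\alpha_{>i}=u\circ\epsilon(\alpha)=\sum_i \alpha_{\le i}\htimes S(\alpha_{>i})$ directly (proving the first and asserting the second is analogous), and it does so by pairing the left-hand side against an arbitrary $\beta$ and analyzing the set $N$ of indices $i$ with $(\alpha_{\le i})^c=\beta_{\le i}$ and $\alpha_{>i}=\beta_{>i}$, showing $N$ is empty or a pair of adjacent indices with cancelling signs $(-1)^i$. Your condition that $C=T\triangle D(\alpha)$ be an initial segment $\{1,\dots,k\}$, with $\gamma_T$ produced exactly at $i=k$ and $i=k+1$, is precisely the primal expansion of that dual analysis, carried out via the ribbon descent-set rules $D(\mu\nu)=D(\mu)\cup\{|\mu|\}\cup(|\mu|+D(\nu))$ and $D(\mu\rhd\nu)=D(\mu)\cup(|\mu|+D(\nu))$ instead of the pairing; I checked the bookkeeping, including the boundary indices $i\in\{0,n\}$ and the case $n=1$, and it is exact. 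Your two real departures both buy something. First, the Takeuchi-style argument — writing $1_\Comp=u\circ\epsilon+\pi$ and observing $\pi^{\star k}$ kills $\Comp_n$ for $k>n$ — is legitimate here because convolution in ${\rm End}_{\,\FF}(\Comp)$ is associative as soon as the product is associative and the coproduct coassociative (no bialgebra axiom needed, exactly as the paper remarks when defining the antipode), and it gives existence and uniqueness of the two-sided inverse up front, so you only need the single identity $S\star 1_\Comp=u\circ\epsilon$ where the paper must check both sides. Second, your derivation of the four exceptional families via the observation that $\alpha\propto n$ and $\alpha^c\propto n$ force $D(\alpha)$ and its complement to be the two colour classes of a proper $2$-colouring of the connected path $P_{n-1}$ is a clean justification of a classification the paper merely states. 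One small point to make explicit when invoking the descent principle for the consequence: the paper's diagram argument yields $S'\star(\sigma\circ\iota)=u'\circ\epsilon'$, so you should note that $\sigma\circ\iota=1_{G_0(\H_\bullet)}$ (which holds by the definitions of $\sigma$ and $\iota$) before concluding that $\sigma\circ S\circ\iota$ is the antipode.
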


\begin{proof}
If $S$ is the antipode of $\Comp$ then $\sigma\circ S\circ \iota$ is the antipode of $G_0(\H_\bullet)$. Thus it suffices to show that 
\[
\sum_{i=0}^n S(\alpha_{\leq i}) \htimes \alpha_{>i} 
 = u\circ \epsilon(\alpha) = \sum_{i=0}^n {\alpha_{\leq i}} \htimes S({\alpha_{>i}}),\quad \forall \alpha\models n.
\]
We only show the first equality and one can check that the same argument works for the second equality. It is trivial when $\alpha=\emptyset$. Assume $n\geq1$ below. Then $u\circ \epsilon(\alpha)=0$. For any $\beta\propto n$, it follows the self-duality of $\Comp$ that 
\begin{equation}\label{eq:antipode}
\left\langle \sum_{i=0}^n S({\alpha_{\leq i}}) \htimes {\alpha_{>i}}, \beta
\right\rangle 
= \sum_{i=0}^n  \left\langle S({\alpha_{\leq i}}) \otimes {\alpha_{>i}}, \Delta(\beta) \right\rangle
= \sum_{i=0}^n \langle S({\alpha_{\leq i}}), {\beta_{\leq i}} \rangle \cdot \langle {\alpha_{>i}}, {\beta_{>i}} \rangle.
\end{equation}
Thus it suffices to show that the sum of $L_i:=\langle S({\alpha_{\leq i}}), {\beta_{\leq i}} \rangle \cdot \langle {\alpha_{>i}}, {\beta_{>i}} \rangle$ for $i=0,1,\ldots,n$ equals $0$. One sees that
\[
L_i = \begin{cases}
(-1)^i, & {\rm if}\ (\alpha_{\leq i})^c = \beta_{\leq i},\ \alpha_{>i} = \beta_{>i},\\
0, & {\rm otherwise}.
\end{cases}
\]
Let $N$ be the set of all $i\in\{0,1,\ldots,n\}$ such that $L_i\ne0$. It is trivial if $N=\emptyset$. 

Suppose that $i\in N$. One sees that $D(\alpha_{\leq j})=D(\alpha)\cap[j-1]$ and $D(\alpha_{>j})=D(\alpha)\cap\{j+1,\ldots,n-1\}$ for any $j$; similarly for $\beta$. Hence $(\alpha_{\leq i})^c = \beta_{\leq i}$ implies $(\alpha_{\leq j}) = \beta_{\leq j}$ for all $j<i$, and $\alpha_{>i} = \beta_{>i}$ implies $\alpha_{>j} = \beta_{>j}$ for all $j>i$.

Since $(\alpha_{\leq i})^c = \beta_{\leq i}$, the number $i-1$ must belong to exactly one of $D(\alpha)$ and $D(\beta)$. This forces $\alpha_{>j} \ne \beta_{>j}$ for all $j<i-1$. Similarly, since $\alpha_{>i} = \beta_{>i}$, the number $i+1$ belongs to both or neither of $D(\alpha)$ and $D(\beta)$. This forces $(\alpha_{\leq j})^c \ne \beta_{\leq j}$ for all $j>i+1$. Hence $N\subseteq\{i-1,i,i+1\}$.

If $i$ belongs to exactly one of $D(\alpha)$ and $D(\beta)$, then $N=\{i,i+1\}$ since $(\alpha_{\leq i+1})^c = \beta_{\leq i+1}$ and $\alpha_{>i-1}\ne \beta_{i-1}$.

If $i$ belongs to both or neither of $D(\alpha)$ and $D(\beta)$, then $N=\{i-1,i\}$ since $(\alpha_{\leq i+1})^c \ne \beta_{\leq i+1}$ and $\alpha_{>i-1} = \beta_{i-1}$.

In either case above the equation (\ref{eq:antipode}) equals $1 - 1 = 0$. This completes the proof.
\end{proof}

\section{Questions and Remarks}\label{sec:future}

\subsection{Dimension}
If the Coxeter system $(W,S)$ is simply laced then using the basis for $\H(\bq)$ provided in  Theorem~\ref{thm:SimplyLaced} one can obtain recursive formulas for the dimension of $\H(\bq)$. Is there anything else (e.g. closed formula and combinatorial interpretation) one can say about this dimension?  More generally, how to write down a basis for $H(\bq)$ of an arbitrary Coxeter system? 
%Moreover, what is the representation theory of $\H(\bq)$ for an arbitrary simply laced Coxeter system?
%Induction and restriction in general?

\subsection{Type A}
In type A we know that the dimension of a collapse-free and commutative $\H(\bq)$ is a Fibonacci number; for example, one can take $\bq=(0,1,0,1,\ldots)$ or $\bq=(1,0,1,0,\ldots)$. What if $\H(\bq)$ is not commutative? 

For instance, let $\bq$ be a sequence of $m-1$ zeros followed by $n-1$ ones. Then $\H(\bq)$ is a quotient of $H_m(0)\otimes \FF\SS_n$ and has dimension $(m-1)!(n!+m-1)$, by Theorem~\ref{thm:SimplyLaced}. How does the representation theory of this algebra connect to the representation theory of $H_m(0)$ and $\SS_n$?

Here is another example. If $\bq$ consists of $a$ many copies of $0$ followed by $b$ many copies of $q\ne0$ and then $c$ many copies of $0$, one can use Theorem~\ref{thm:SimplyLaced} to show that
\begin{eqnarray*}
\dim \H(\bq)
& = & c!(a!((b+1)!+a)+(a+1)!c).
%& = & a!(b+1)!c! + (a+1)!(c+1)! - a!c!.
\end{eqnarray*}
If $\bq$ consists of $a$ many copies of $q\ne0$ followed by $b$ many copies of $0$ and then $c$ many copies of $q'\ne0$, then 
\begin{eqnarray*}
\dim \H(\bq) 
&=& b!((a+1)!+b)+(b-1)!((a+1)!+b-1)((c+1)!-1).
%&=& (b-1)! [ (a+1)!(c+1)! + (b-1)((a+1)!+(c+1)!) + b^2-b+1)].
\end{eqnarray*}
What is the representation theory of $\H(\bq)$ in these two cases?

A final remark for type A: the tower of algebras $\H_0\hookrightarrow \H_1\hookrightarrow \H_2\hookrightarrow \cdots$ are different from the tower of algebras defined by Okada~\cite{Okada}, whose dimensions are $n!$ and whose Bratteli diagram is the Young-Fibonacci poset. 

\subsection{Other types}
Our results on the commutative algebra $\H(G,R)$ applies to affine type A. Let $G$ be the cycle $C_n$ with vertices $1,\ldots,n$ and edges $\{1,2\},\ldots,\{n-1,n\},\{n,1\}$. We know that $\H(C_n,R)$ has a basis indexed by $\II(C_n)$. One checks that if $n\geq3$ then $\II(C_n)=\II(P_{n-1})\sqcup\II(P_{n-3})$, which is the shadow of the decomposition
\[
\H(C_n,R)\cong \H(P_{n-1},R\cap[n-1]) \oplus \H(P_{n-1},R\cap[n-1]) x_n.
\]
Hence for $n\geq 3$ one has $|\II(C_n)| = F_{n+1}+F_{n-1}=L_n$, where $L_n$ is the $n$-th \emph{Lucas number}. %One also sees that $|\II(C_n)|=L_n$ holds for $n=1,2$.%, but not for $n=0$, where by convention $C_1$ is a loop on a single vertex, and $C_0$ is the empty graph.
When $R=\emptyset$ the algebra $\H(C_n,\emptyset)$ is semisimple and has all simple modules 1-dimensional. Unfortunately, we do not have a tower of algebras $\H(C_n,\emptyset)$, since there is no natural embedding $C_n\hookrightarrow C_{n+1}$, and thus have no further result in this direction.
%Embed a path $P_n$ into the cycle $C_n$!

One can also take $G$ to be the Coxeter diagram of finite type $D_n$ ($n\geq2$) or affine type $\widetilde D_n$ ($n\geq 5$). The dimension of $\H(G,R)$ is $4,5,9,14,23,\ldots$ (OEIS entry A000285) or $17, 24, 41,65,106,\ldots$ (OEIS entry A190996) in these cases.

\subsection{Power series realization}
In Section~\ref{sec:H01} we defined an algebra and coalgebra structure for the Grothendieck group $G_0(\H_\bullet)$ of the tower of algebras $\H_\bullet: \H_0\hookrightarrow \H_1\hookrightarrow \H_2\hookrightarrow \cdots$, with a self-dual basis consisting of the simple modules, which are indexed by compositions with internal parts larger than 1. This is further extended to $\Comp$ with a basis indexed by all compositions. Is there a Frobenius type of characteristic map for $G_0(\H_\bullet)$, or in other words, is there a power series realization of $G_0(\H_\bullet)$ as both an algebra and a coalgebra, similarly to $G_0(\CC\SS_\bullet)\cong \Sym$, $G_0(\H_\bullet(0))\cong \QSym$, and $K_0(\H_\bullet(0))\cong \NSym$? And how about $\Comp$?

%\renewcommand{\thesection}{\Roman{section}}
%\setcounter{section}{2}
%\appendix

%Empty graphs, or complete graphs?

%$a_i=\infty$ and either $a_{i-1}$ or $a_{i+1}$ lies in $\FF\setminus\{0\}$ then $T_i=0$.

%K-L basis and polynomials?


\begin{thebibliography}{50}

\bibitem{InfinitesimalHopf}
M. Aguiar, Infinitesimal Hopf algebras, Contemporary Mathematics 267 (2000) 1--30. 

\bibitem{ASS}
I. Assem, D. Simson, and A. Skowro\'nski, Elements of the representation theory of associative algebras, vol. 1: Techniques of representation theory, London Mathematical Society Student Texts, vol. 65, Cambridge University Press, Cambridge, 2006.

\bibitem{BjornerBrenti}
A. Bj\"orner and F. Brenti, Combinatorics of Coxeter Groups, GTM 231, Springer 2005.

\bibitem{WeakHopf}
G. B\"ohm, F. Nill, and K. Szlach\'anyi, Weak Hopf Algebras: I. Integral Theory and C-Structure, J. Algebra 221(1999) 385--438.

\bibitem{CookNagel}
D. Cook II and U. Nagel, Cohen-Macaulay graphs and face vectors of flag complexes, SIAM J. Discrete Math. 26
(2012) 89--101.

\bibitem{KrobThibon}
D. Krob and J.-Y. Thibon, Noncommutative symmetric functions IV: Quantum linear groups and Hecke algebras at $q=0$, J. Algebraic Combin. 6 (1997) 339--376.

\bibitem{Lusztig}
G. Lusztig, Hecke algebras with unequal parameters, CRM monograph series, vol.18, American Mathematical Society, 2003. 

\bibitem{Norton}
P.N. Norton, 0-Hecke algebras, J. Austral. Math. Soc. A 27 (1979) 337--357.

\bibitem{Okada}
S. Okada, Algebras associated to the Young-Fibonacci lattice, Trans. Amer. Math. Soc. 346 (1994) 549--568.

\bibitem{EC1}
R. Stanley, Enumerative Combinatorics, volume 1, second edition, Cambridge University Press, 2011.

\bibitem{EC2}
R. Stanley, Enumerative combinatorics, volume 2, Cambridge University
Press 1999.

\bibitem{Zelevinsky}
A.V. Zelevinsky, Representations of finite classical groups: a Hopf algebra approach, Lecture Notes in Mathematics 869, Springer-Verlag, Berlin-New York, 1981.

\end{thebibliography}
\end{document}